\documentclass[12pt]{amsart}
%\documentclass{amsart}
%\usepackage[toc,page]{appendix}
%\usepackage{amssymb, latexsym, euscript}
%\usepackage{showkeys}
%%%\usepackage{refcheck}
%\usepackage[pagewise]{lineno}
%\linenumbers % turn on line numbering
\usepackage{amssymb}
\usepackage{latexsym}
\usepackage{amsmath}
\usepackage{enumitem}
\usepackage{euscript}

   \def\sH{{\mathfrak H}}   
      \def\sL{{\mathfrak L}}
\def\sM{{\mathfrak M}}

      \def\dC{{\mathbb C}}
\def\dD{{\mathbb D}}

   \def\dN{{\mathbb N}}   
      \def\dR{{\mathbb R}}

\def\cA{{\mathcal A}}   \def\cB{{\mathcal B}}

      \def\cL{{\mathcal L}}
\def\cM{{\mathcal M}}   \def\cN{{\mathcal N}}   
   \def\cQ{{\mathcal Q}}   \def\cR{{\mathcal R}}
\def\cS{{\mathcal S}}   \def\cT{{\mathcal T}}   \def\cU{{\mathcal U}}
      \def\cX{{\mathcal X}}
   \def\cZ{{\mathcal Z}}

\def\ran{{\rm ran\,}}
\def\cran{{\rm \overline{ran}\,}}
\def\dom{{\rm dom\,}}

\textwidth=16.5cm \textheight=22.5cm \hoffset=-17mm \voffset=-16mm
\headheight=14pt \headsep=22pt

\newtheorem{theorem}{Theorem}[section]
\newtheorem{lemma}[theorem]{Lemma}
\newtheorem{proposition}[theorem]{Proposition}
\newtheorem{corollary}[theorem]{Corollary}
\newtheorem{definition}[theorem]{Definition}
\newtheorem{remark}[theorem]{Remark}

\numberwithin{equation}{section}

\def\RE{{\rm Re\,}}
\def\IM{{\rm Im\,}}

\def\wt{\widetilde}
\def\wh{\widehat}

\def\f{\varphi}
\def\half{{\frac{1}{2}}}
\def\uphar{{\upharpoonright\,}}

\begin{document}
\title
[The Kato square root problem]
{On the Kato square root problem}
\author[Yury Arlinski\u{\i}]{Yury Arlinski\u{\i}}
\address{Stuttgart, Germany}
\email{yury.arlinskii@gmail.com}

\subjclass[2010]{Primary 47B12, 47B44; Secondary 47A05}
\keywords{Accretive operator, sectorial operator, square root}

\vskip 1truecm
\thispagestyle{empty}

\date{\today}
\begin{abstract} In the infinite-dimensional separable complex Hilbert space we construct new abstract examples of unbounded maximal accretive and maximal
sectorial operators $B$ for which $\dom B^{\frac{1}{2}}\ne\dom B^{*{\frac{1}{2}}}$. New criterions for the equality are established.
\end{abstract}
\maketitle
%\tableofcontents
\section{Introduction}

Let $\sH$ be an infinite-dimensional separable complex Hilbert space and let $B$ be a maximal accretive operator \cite{Kato}. As is well known (see, e.g., \cite{haase, Kato1961, Langer1962, MP1962, Ouhabaz, SF}) for each $\gamma\in(0,1)$ the maximal accretive fractional power $B^\gamma$ can be defined.
For instance, one can use
\begin{itemize}
\item the Balakrishnan representation \cite{balakr}
\begin{equation}\label{lhjcnt}
B^\gamma u=\cfrac{\sin(\gamma\pi)}{\pi}\int\limits_{0}^\infty t^{\gamma-1}B(B+tI)^{-1}u dt,\; u\in\dom B;
\end{equation}
\item the Sz.-Nagy -- Foias functional calculus for contractions \cite[Chapter IV]{SF}:

if $T:=(I-B)(I+B)^{-1}$, then
\[
B^\gamma=v^\gamma(T)(u^{\gamma}(T))^{-1},\; v(z)=1-z,\; u(z)=1+z,\;z\in\dD,
\]
i.e., $B^\gamma=\left((I-T)(I+T)^{-1}\right)^\gamma=(I-T)^\gamma\left((I+T)^{\gamma}\right)^{-1}.$
\end{itemize}

The fractional powers possess the properties \cite{Kato1961, SF}:
\begin{itemize}
\item the operator $B^\gamma$ is \textit{regularly accretive} with index $\le \tan{\frac{\pi\gamma}{2}}$ \cite{Kato1961}, i.e., $B^\gamma$ is \textit{maximal sectorial} with vertex at the origin and with semi-angle $\cfrac{\pi\gamma}{2}$ \cite{Kato};
%%%\item $B^{\gamma_1}B^{\gamma_2} f=B^{\gamma_1+\gamma_2}f$ $\forall f\in\dom B^{\gamma_1+\gamma_2};$
\item $\dom (B+\lambda I)^\gamma=\dom B^\gamma$ for all $\lambda>0;$

\item $(B^*)^\gamma=(B^\gamma)^*$ (in the sequel $(B^*)^\gamma$ we will  denote by  $B^{*\gamma}$);
 %%$\cfrac{\pi\gamma}{2};$
\item if $\gamma\in(0, \half)$, then $\dom B^\gamma=\dom B^{*\gamma}$ and the operator $\RE(B^\gamma):=\half(B^\gamma+B^{*\gamma})$ is selfadjoint;

\item for each $\gamma\in(\half,1)$ there is an example of $B$ such that $\dom B^\gamma\ne\dom B^{*\gamma} ;$
\item for the square root $B^{\half}$ ($\gamma=\half$) it is proved in \cite[Theorem 5.1]{Kato1961} that the intersection $\dom B^{\half}\cap\dom B^{*\half}$
is a core of both $B^{\half}$ and $B^{*\half} $ and the operator
$$\RE (B^{\half})=\half(B^\half+B^{*\half}),\;\dom \RE(B^\half)= \dom B^{\half}\cap\dom B^{*\half} $$
 is selfadjoint.
\end{itemize}
Other remarkable properties of powers $B^\gamma$ for $\gamma\in(0,\half)$ can be found in \cite[Theorem 1.1, Theorem 3.1]{Kato1961} and in \cite[Chapter IV, Theorem 5.1]{SF}.

The following problem was formulated by Kato in \cite{Kato1961}: \textit{is the equality
\begin{equation}\label{jcyjdyt}
\dom B^\half=\dom B^{*\half}
\end{equation}
 true for an arbitrary unbounded  maximal accretive operator $B$?}

The negative answer was given in \cite{Lions1962} by Lions. By means of the theory of interpolation spaces he showed that for the maximal accretive operator in $L^2(\dR_+)$
\begin{equation}
\label{lionsop}
\cB_0=\cfrac{d}{dx},\;\dom \cB_0=\left\{f\in H^{1}(\dR_+): f(0)=0\right\}
\end{equation}
holds the inequality $\dom \cB_0^\half\ne\dom \cB_0^{*\half}$.
Note that the operator $\cB_0$ is not sectorial and, moreover, the operator $-i\cB_0$ is maximal symmetric and non-selfadjoint. % is $(A_0f,f)=0$ for all $f\in\dom A_0.$

The next results were established in \cite{Lions1962} and \cite {Kato1962}.
\begin{theorem}\label{lionskato}
Let $B$ be a maximal sectorial operator and let ${\mathfrak b}$ be the closed quadratic form associated with $B.$
 Then
 
 1) the inclusion $\dom B^{\half}\subseteq\dom {\mathfrak b}$ is equivalent to the inclusion $\dom B^{*\half}\supseteq\dom {\mathfrak b}$; the same is true when $B$ and $B^*$ are exchanged;

2) the inclusion $\dom B^{\half}\subseteq\dom B^{*\half}$ is equivalent to the inclusions $\dom B^{\half}\subseteq\dom {\mathfrak b}\subseteq\dom B^{*\half}$;
the same is true when $B$ and $B^*$ are exchanged.
%%%\end{itemize}
\end{theorem}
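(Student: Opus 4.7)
The approach centers on the factorization identity
\[
\mathfrak{b}[u,v] = (B^{1/2}u,\,B^{*1/2}v),
\]
which comes from writing $B = B^{1/2}\cdot B^{1/2}$ on $\dom B$, using the adjoint relation $(B^{1/2})^{*}=B^{*1/2}$, and invoking the form representation $\mathfrak{b}[u,v]=(Bu,v)$. I plan to use this identity in two guises: a ``strong'' version valid for $u\in\dom B$ and $v\in\dom\mathfrak{b}\cap\dom B^{*1/2}$, and a ``dual'' version valid for $u\in\dom B^{1/2}\cap\dom\mathfrak{b}$ and $w\in\dom B^{*}$, obtained by routing through $\mathfrak{b}^{*}[w,u]=(B^{*}w,u)=(B^{*1/2}w,B^{1/2}u)$. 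Without loss of generality I assume $B$ is strictly accretive: by the second property listed in the introduction the three domains $\dom B^{1/2}$, $\dom B^{*1/2}$, $\dom\mathfrak{b}$ are unchanged under a shift $B\mapsto B+\lambda I$, and strict accretivity supplies boundedness of $B^{-1/2}$, $B^{*-1/2}$ and density of $\ran B^{1/2}$ in $\sH$, which will both be needed.

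For part~1), the direction $\dom\mathfrak{b}\subseteq\dom B^{*1/2}\Rightarrow\dom B^{1/2}\subseteq\dom\mathfrak{b}$ runs as follows. The closed graph theorem (both spaces are Hilbert in their natural norms) yields $\|B^{*1/2}v\|\le C\|v\|_{\mathfrak{b}}$ on $\dom\mathfrak{b}$. Plugging $v=u\in\dom B\subseteq\dom B^{*1/2}$ into the strong identity, then applying Cauchy--Schwarz, the sectorial bound $\|u\|_{\mathfrak{b}}^{2}\le C'(|\mathfrak{b}[u]|+\|u\|^{2})$, and Young's inequality, should yield
\[
\|u\|_{\mathfrak{b}}^{2} \le C''\bigl(\|B^{1/2}u\|^{2}+\|u\|^{2}\bigr),\qquad u\in\dom B.
\]
Since $\dom B$ is a core of $B^{1/2}$, a Cauchy-sequence argument extends this estimate to all of $\dom B^{1/2}$, placing it inside $\dom\mathfrak{b}$.

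The converse $\dom B^{1/2}\subseteq\dom\mathfrak{b}\Rightarrow\dom\mathfrak{b}\subseteq\dom B^{*1/2}$ is dual. Closed graph gives $\|u\|_{\mathfrak{b}}\le C\|u\|_{B^{1/2}}$, reduced by strict accretivity to $\|u\|_{\mathfrak{b}}\lesssim\|B^{1/2}u\|$. For fixed $w\in\dom B^{*}$ the dual identity together with sectoriality of $\mathfrak{b}$ produces
\[
|(B^{1/2}u,\,B^{*1/2}w)| \le \mathrm{const}\cdot\|B^{1/2}u\|\,\|w\|_{\mathfrak{b}} \quad\text{for all } u\in\dom B^{1/2},
\]
and density of $\ran B^{1/2}$ in $\sH$ then upgrades this to $\|B^{*1/2}w\|\lesssim\|w\|_{\mathfrak{b}}$. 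Since $\dom B^{*}$ is a core of $\mathfrak{b}^{*}$ (same domain as $\mathfrak{b}$), any $v\in\dom\mathfrak{b}$ admits a sequence $w_{n}\in\dom B^{*}$ with $w_{n}\to v$ in $\|\cdot\|_{\mathfrak{b}}$; by the preceding bound $\{B^{*1/2}w_{n}\}$ is Cauchy in $\sH$, so closedness of $B^{*1/2}$ forces $v\in\dom B^{*1/2}$. The ``$B\leftrightarrow B^{*}$'' variants come from running the same argument with $B^{*}$ in place of $B$.

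Part~2) is now short. The implication $(\Leftarrow)$ is transitivity. For $(\Rightarrow)$, the hypothesis $\dom B^{1/2}\subseteq\dom B^{*1/2}$ together with closed graph and strict accretivity yields $\|B^{*1/2}u\|\lesssim\|B^{1/2}u\|$ on $\dom B^{1/2}$, and feeding this into the strong identity with $v=u\in\dom B$---exactly the computation used in part~1) $(\Leftarrow)$---produces $\dom B^{1/2}\subseteq\dom\mathfrak{b}$, whence part~1) supplies $\dom\mathfrak{b}\subseteq\dom B^{*1/2}$. The step I expect to be most delicate is the density argument for $\ran B^{1/2}$ in the converse of part~1): translating an inequality that initially lives only on the image of $B^{1/2}$ into an honest norm bound on $B^{*1/2}w$. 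This is precisely the point at which the preliminary shift to strict accretivity becomes indispensable.
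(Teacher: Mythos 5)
Your proposal is correct, and it is worth noting that the paper itself offers no proof of this statement: Theorem \ref{lionskato} is quoted from Lions and Kato, and Remark \ref{gjckt} records that Lions argued via interpolation spaces while Kato used the representations \eqref{secttt31}--\eqref{sectt21} through $A_R^{\half}$ and the bounded selfadjoint $G$; the paper's own machinery (the contraction $Z=i(\RE (B^{-\half}))^{-1}\IM (B^{-\half})$, the ranges $\ran (I\pm Z)$ and $\ran D_{Z^*}$, Lemma \ref{gjcktte} and Douglas's lemma) is deployed only later, in Theorem \ref{sqdomrav22}, to recover the equivalences (a)--(e) for boundedly invertible $m$-sectorial $B$. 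Your route differs from both: a direct argument from the identity $\mathfrak{b}[u,v]=(B^{\half}u,B^{*\half}v)$, the closed graph theorem in the form and graph Hilbert spaces, and the two core facts ($\dom B$ is a core of $B^{\half}$, $\dom B^{*}$ is a core of $\mathfrak{b}^{*}$), with the shift $B\mapsto B+\lambda I$ securing $\ran B^{\half}=\sH$. All steps check out; the only presentational caveat is that the inclusion $\dom B\subseteq\dom B^{*\half}$, which you write as if automatic, holds only because of the hypothesis in force in each direction (via $\dom B\subseteq\dom\mathfrak{b}\subseteq\dom B^{*\half}$, respectively $\dom B\subseteq\dom B^{\half}\subseteq\dom B^{*\half}$), and you should say so explicitly. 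Your approach is more elementary and handles the one-sided inclusions involving $\dom\mathfrak{b}$ directly, whereas the paper's contraction framework, at the price of assuming bounded invertibility, additionally yields the quantitative criteria (f)--(i) of Theorem \ref{sqdomrav22} expressed through $\|Z\|<1$.
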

It follows \cite{Lions1962}, \cite {Kato1962} that if both $\dom B^{\half}$ and $\dom B^{*\half}$ are subsets (or oversets) of $\dom {\mathfrak b}$, then
\begin{equation}\label{dvarav}
\dom B^\half=\dom B^{*\half}=\dom {\mathfrak b},
\end{equation}
 and therefore
 \[
{\mathfrak b}[u,v]=(B^{\half}u, B^{*\half}v),\; u,v\in \dom B^\half= \dom B^{*\half}=\dom {\mathfrak b}.
\]
Besides, it is shown in \cite{Lions1962}, \cite {Kato1962} that the equality $\dom B=\dom B^*$ implies \eqref{dvarav}.

McIntosh in \cite{McInt1972} (see also \cite[Preliminaries, Theorem 6]{AuTch}) presented an abstract example of unbounded maximal sectorial operator $B$ such that $\dom B^\half\ne\dom B^{*\half} .$ The operator constructed in \cite{McInt1972} is the countable orthogonal sum of special finite--dimensional sectorial operators with a fixed semi-angle.

 By means of another way and by using the operator $\cB_0$ in \eqref{lionsop}, Gomilko in \cite{Gomilko} has proved  the existence of maximal sectorial operators $B$ with the property
$\dom B^\half\ne\dom B^{*\half} .$  In \cite{Gomilko} it is shown that if a maximal accretive operator $B$ is boundedly invertible, then the operator $B^\half$ admits the representation
\begin{equation} \label{ujvbkrj}
B^\half f=S^{-1}(G^*+I) f\;\;\forall f\in\dom B^\half,
\end{equation}
where $S=2\RE(B^{-\half})$, $G=B^{*\half}B^{-\half},$ $\dom G=B^{\half}\left(\dom B^\half\cap\dom B^{*\half}\right).$
It is proved in \cite[Theorem 1]{Gomilko} that the operator $G$ is maximal accretive and the equality $\dom B^\half=\dom B^{*\half}$ holds if and only if the operator $G$ is bounded and has bounded inverse.
Moreover, the following theorem has been established:
\begin{theorem}\label{thegom} \cite[Theorem 3]{Gomilko}
Let $B_\gamma=\left(S^{-1}(G^{*\gamma}+I)\right)^2$, $\gamma\in(0,1)$. Then the operator $B_\gamma$ is maximal sectorial with semi-angle $\cfrac{\pi\gamma}{2}$ and the equality $\dom B_\gamma^\half=\dom B_\gamma^{*\half}$ holds if and only if holds the equality $\dom B^\half=\dom B^{*\half}$.
\end{theorem}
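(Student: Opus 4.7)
The plan is to apply the criterion of \cite[Theorem 1]{Gomilko} to the operator $B_\gamma$ itself: I will verify that $B_\gamma$ is a boundedly invertible maximal sectorial operator with semi-angle $\pi\gamma/2$, identify its associated angular operator $G_\gamma:=B_\gamma^{*\half}B_\gamma^{-\half}$ as $G^\gamma$, and transfer the criterion via the spectral-mapping identity $\sigma(G^\gamma)\setminus\{0\}=\{z^\gamma:z\in\sigma(G)\setminus\{0\}\}$, which renders boundedness and bounded invertibility of $G^\gamma$ equivalent to the corresponding properties of $G$. The cornerstone of the computation is an intertwining identity. From $S=B^{-\half}+B^{*-\half}$, $G=B^{*\half}B^{-\half}$ and $G^{*}=B^{*-\half}B^{\half}$, a direct expansion using $B^{\half}B^{-\half}=I$ and $B^{-\half}B^{\half}=I|_{\dom B^{\half}}$ yields $SG^{-1}=G^{*}S$ on $\dom G^{-1}$. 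The operator $S$ is bounded, selfadjoint, nonnegative and injective (since $(Sf,f)=2\RE(B^{-\half}f,f)=0$ together with the sectoriality of $B^{-\half}$ forces $f=0$), so $S^{-1}$ is a densely defined unbounded positive selfadjoint operator. Rewriting the identity as $(G^{*}+t)^{-1}=S(G^{-1}+t)^{-1}S^{-1}$ for all $t>0$ and inserting it into the Balakrishnan representation \eqref{lhjcnt} of $G^{*\gamma}$ yields, after commuting $S$ through the integral, the fractional intertwining
\[
SG^{-\gamma}=G^{*\gamma}S,\qquad \gamma\in(0,1).
\]

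Set $L_\gamma:=G^{*\gamma}+I$, which is maximal sectorial of semi-angle $\pi\gamma/2$ with $\RE L_\gamma \geq I$ and hence $\|L_\gamma^{-1}\|\leq 1$. Define $T_\gamma:=S^{-1}L_\gamma$, a closed densely defined operator with bounded inverse $T_\gamma^{-1}=L_\gamma^{-1}S$, and put $B_\gamma:=T_\gamma^2$, whose inverse $B_\gamma^{-1}=(L_\gamma^{-1}S)^2$ is bounded. The technical core of the argument is to show that $T_\gamma$ is maximal sectorial with semi-angle $\pi\gamma/4$. I would verify this through a quadratic-form analysis of $(T_\gamma f,f)=((G^\gamma+I)^{-1}h,Sh)$ for $f=T_\gamma^{-1}h$, exploiting the fractional intertwining to recognize the right-hand side as sectorial of the required angle. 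Granting this step, $B_\gamma=T_\gamma^2$ is maximal sectorial of semi-angle $\pi\gamma/2$, and uniqueness of the sectorial square root forces $B_\gamma^{\half}=T_\gamma$.

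From $B_\gamma^{\half}=S^{-1}L_\gamma$ one reads off $B_\gamma^{-\half}=L_\gamma^{-1}S$ and, by taking adjoints, $B_\gamma^{*\half}=(G^\gamma+I)S^{-1}$ and $B_\gamma^{*-\half}=S(G^\gamma+I)^{-1}$. Using the consequence $(G^{*\gamma}+I)^{-1}=S(G^{-\gamma}+I)^{-1}S^{-1}$ of the fractional intertwining, together with the functional-calculus identity $(G^{-\gamma}+I)^{-1}=G^\gamma(G^\gamma+I)^{-1}$, one obtains
\[
B_\gamma^{-\half}+B_\gamma^{*-\half}=S,\qquad G_\gamma=B_\gamma^{*\half}B_\gamma^{-\half}=G^\gamma.
\]
Hence \cite[Theorem 1]{Gomilko} applied to $B_\gamma$ asserts that $\dom B_\gamma^{\half}=\dom B_\gamma^{*\half}$ if and only if $G^\gamma$ is bounded with bounded inverse, which by the spectral mapping is equivalent to $G$ being bounded with bounded inverse, and hence, applying \cite[Theorem 1]{Gomilko} to $B$, to $\dom B^{\half}=\dom B^{*\half}$. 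The principal obstacle is the sectoriality of $T_\gamma$ with the sharp semi-angle $\pi\gamma/4$; once this is in hand, the remaining manipulations are algebraic consequences of the intertwining identity and the Balakrishnan calculus.
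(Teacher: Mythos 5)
Your algebra is fine where it is actually carried out: the intertwining $SG^{-1}=G^{*}S$ and its fractional version $SG^{-\gamma}=G^{*\gamma}S$ are correct (they are the paper's relations $Q(I\pm Z)^\gamma=(I\mp Z^*)^\gamma Q$ in disguise), and the identifications $B_\gamma^{-\half}+B_\gamma^{*-\half}=S$ and $B_\gamma^{*\half}B_\gamma^{-\half}=G^\gamma$ agree with the formulas the paper obtains in Theorem \ref{gomthe3}. The genuine gap sits exactly at the point you label the technical core and then skip. First, the sectoriality of $T_\gamma=S^{-1}(G^{*\gamma}+I)$ is only announced (``I would verify \dots granting this step''), not proved. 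Second, and more seriously, even granting it, the inference ``$T_\gamma$ is m-sectorial of semi-angle $\pi\gamma/4$, hence $B_\gamma=T_\gamma^2$ is m-sectorial of semi-angle $\pi\gamma/2$'' is invalid: numerical-range sectoriality is not preserved under squaring. Already in dimension two, $T=\begin{pmatrix}1&t\\0&s\end{pmatrix}$ with $t^2=4s\sin^2\alpha$ is $\alpha$-sectorial, while $T^2$ is never $2\alpha$-sectorial and for small $s$ is not even accretive. This is precisely why the paper does not argue through the angle of the square root alone: in Proposition \ref{pfuun} and Theorem \ref{summar} the sectoriality of the square is controlled by the extra structure $QZ=-Z^*Q$ \emph{together with} $Z\in C_\sH(\alpha)$, and in the proof of Theorem \ref{gomthe3} the decisive fact is that $Z_\gamma=(I-F^\gamma)(I+F^\gamma)^{-1}$, $F=B^\half B^{*-\half}$, lies in $C_\sH(\tfrac{\pi\gamma}{2})$ by Proposition \ref{cdjqcndf} (because $F^\gamma$ is m-$\tfrac{\pi\gamma}{2}$-sectorial), after which Proposition \ref{pfuun} delivers both the sectoriality of $B_\gamma$ and the formula $B_\gamma^{\half}=(I+Z_\gamma)^{-1}\cL$ in one stroke. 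Your quadratic-form analysis would have to reproduce this mechanism for $B_\gamma$ itself, not for $T_\gamma$.

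A secondary weak point is the closing step: the equivalence ``$G^\gamma$ bounded with bounded inverse $\Longleftrightarrow$ $G$ bounded with bounded inverse'' cannot be obtained ``by the spectral mapping'' alone, because boundedness of an m-accretive operator is not a spectral property (the inverse of the Volterra integration operator is unbounded, m-accretive, with empty spectrum), and your stated mapping excludes the point $0$ anyway. You would need the composition rule $(G^\gamma)^{1/\gamma}=G$ and boundedness of powers of bounded sectorial operators, or, as in the paper, the range identities $\ran(I\pm Z_\gamma)=\ran(I\pm Z)^\gamma$ combined with Lemma \ref{gjcktte} and Theorem \ref{sqdomrav22}, which bypass Gomilko's Theorem 1 and the spectral mapping entirely. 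These repairs are available, but as written the proposal leaves both the sectoriality assertion and the transfer of the domain equality unproven.
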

The Kato problem was positively solved in \cite{ElstRob} for a class of abstract maximal sectorial operators, in  \cite{K2, AuTch, AHLMcITch, K3, K1, Lions1962} for elliptic differential operators, %of the form $A=-{\rm div}(\mu\nabla)$ and in
in  \cite{Sku3, Sku2} for some class of  strongly elliptic functional-differential operators.

In this paper yet another approach to the abstract Kato square root problem is proposed. We essentially relay on the properties of contractions and especially of contractions which are linear fractional transformations of $m$-sectorial operators (see Section \ref{jklckfc}).
  %%The basic properties of this class are presented in Section \ref{jklckfc} (see Theorem \ref{vyjujtr} and Proposition \ref{rcgj}).
 The usage of our method allows
\begin{enumerate}
\item [({\bf a})]to find new representations for maximal accretive and maximal sectorial operators and their square roots, see Section \ref{rjhtymrd};
\item [({\bf b})] to give new proofs for known and to establish new equivalent conditions of equality \eqref{jcyjdyt} for maximal accretive and maximal sectorial operators, see Theorem \ref{sqdomrav22};  
%%%
\item [({\bf c})] to prove that for an arbitrary maximal accretive operator of the form $\cB=i\cA$, where $\cA$ is a maximal symmetric non-selfadjoint operator, holds the relation $\dom \cB^\half\ne\dom \cB^{*\half}$ (Theorem \ref{maxsym1});
\item [({\bf d})] to give new proofs for main results established  in \cite{Gomilko};
\item [({\bf e})]
to construct a series of new abstract examples of maximal accretive and maximal sectorial operators $B$ such that $\dom B^\half\ne\dom B^{*\half}$ (Section \ref{exampll}), in particular, we construct abstract
 examples of countable and continuum families of unbounded maximal accretive non-sectorial and  maximal sectorial  operators  for which equality \eqref{jcyjdyt} is violated or holds and which possess additional properties.
\end{enumerate}

{\bf Notations.}
Throughout the paper we suppose that the Hilbert spaces are infinite-dimensional, complex and separable.
We use the symbols $\dom T$, $\ran T$, $\ker T$ for
the domain, the range, and the kernel of a linear operator $T$, by $\rho(T)$ and $\sigma(T)$ we  denote the resolvent set and the spectrum of the operator $T$, $\dD$ is the open unit disc,
 the closures of $\dom T$, $\ran T$ are denoted by $\overline{\dom T}$, $\overline{\ran T}$, respectively,
 %%%%%
the identity operator in a Hilbert space is denoted by  $I.$ If $\sL \subset \sH$ is a subspace (closed linear manifolds), the orthogonal projection in $\sH$ onto $\sL$ is denoted by~$P_\sL,$ the orthogonal complement $\sH\ominus\sL$ we denote by $\sL^\perp.$ The notation
$T\uphar \cN$ means the restriction of a linear operator $T$ on the
set $\cN\subset\dom T$.
$\dN$ is the set of natural numbers, $\dN_0:=\dN\cup\{0\}$, $\dR_+:=[0,+\infty)$, $\dC$
is the field of complex numbers.
By $\RE(A)$ and $\IM (A)$ we denote the real and imaginary parts, respectively, of an operator $A$, i.e.,
$$\RE (A)=\half(A+A^*), \;\IM(A)=\frac{1}{2i}(A-A^*),\;\dom \RE (A)=\dom \IM(A)=\dom A\cap\dom A^*.$$

\section{Maximal sectorial operators}
The numerical range $W(A)$ of a linear operator $T$ with domain $\dom A$ in a Hilbert space $\sH$ with the inner product $(\cdot,\cdot)$ is given by
\[
  W(A)=\left\{(Au,u):u\in\dom A, \,\|u\|=1\right\}.
\]
By the Toeplitz-Hausdorff theorem (a short proof can be found in \cite{Gust}) the numerical range is a convex set.

A linear operator $A$ in a Hilbert space $\sH$ is called \textit{accretive} \cite{Kato1961}, \cite{Kato}, \cite{SF}
if $\RE(A u,u)\ge 0$ for all $u\in\dom A$, i.e.,
\[
  W(A) \subseteq \{ z\in\dC: \RE z \ge 0\}.
\]
An accretive operator $A$ is called \textit{maximal accretive}, or
\textit{$m$-accretive} for short, if one of the following equivalent conditions is satisfied:
\begin{enumerate}
\def\labelenumi{\rm (\roman{enumi})}
\item $A$ has no proper accretive extensions in $\sH$;
\item $A$ is densely defined and $\ran (A-\lambda I)=\sH$ for some $\lambda\in \dC$ with $\RE\lambda<0$;
\item $A$ is densely defined and closed, and $A^*$ is accretive;
\item $-A$ generates contractive one-parameter semigroup $T(t)=\exp(-tA)$, $t\in\dR_+ $.
\end{enumerate}
If $A$ is $m$-accretive, then $\ker A=\ker A^*$ and hence
$ \ker A\subseteq\dom A\cap\dom A^*.$

An accretive operator $A$ is called \textit{coercive} if there is a positive constant $a$ such that
$$\RE(Af,f)\ge a||f||^2\;\;\forall f\in\dom A.$$
If $A$ is a coercive, then $\ran A$ is a subspace (closed linear manifold) and $A^{-1}\uphar\ran A$ is a bounded accretive operator. If $A$ is $m$-accretive and coercive in $\sH$, then $\ran A=\sH$.
Clearly, if $A$ is accretive operator, then $A-\lambda I$ is coercive accretive for each $\lambda$, $\RE \lambda<0$.

A linear operator $A$ in a Hilbert space $\sH$ is called \emph{sectorial} with vertex $z=0$ and
semi-angle $\alpha \in [0,\pi/2)$, or \emph{$\alpha$-sectorial} for short, if its numerical range is contained in a
closed sector with semi-angle~$\alpha$, i.e.,
\[
  W(A) \subseteq\left\{z\in\dC:|\arg z|\le \alpha\right\}
\]
or, equivalently, $|\IM (A u,u)|\!\le\! \tan\alpha \,\RE(A u,u)$ for all $u\!\in\!\dom A$.
Clearly, an $\alpha$-sectorial operator is accretive; it is called \textit{maximal sectorial}, or \textit{$m$-$\alpha$-sectorial} for short, if
it is $m$-accretive.

The basic definitions and results concerning ses\-qui\-linear forms can
be found in~\cite{Kato}. If ${\mathfrak a}$ is a closed densely defined
sectorial form in the Hilbert space $\sH$, then by the First
Representation Theorem~\cite{Kato}, there exists a unique
$m$-sectorial operator $A$ in $\sH$ associated with ${\mathfrak a}$ in the
following sense: $(Au,v)={\mathfrak a}[u,v],$
for all $u\in\dom A$ and for all $v\in\dom{\mathfrak a}$. The adjoint
operator $A^*$ is associated with the adjoint form
${\mathfrak a}^*[u,v]:=\overline{{\mathfrak a}[v,u]}$. Denote by $A_R$ the nonnegative selfadjoint
operator associated with the real part
${\mathfrak a}_R[u,v]:=\left({\mathfrak a}[u,v]+{\mathfrak a}^*[u,v]\right)/2$ of the form
${\mathfrak a}$. The operator $A_R$ is called the \textit{real part} of $A$. According to the
Second Representation Theorem~\cite{Kato} the equality
$
\dom{\mathfrak a}=\dom A^\half_R$ holds.
Moreover,
\begin{equation}\label{secttt31}
{\mathfrak a}[u,v]=((I+iG)A^{\frac{1}{2}}_Ru,A^{\frac{1}{2}}_Rv),\;u,v\in\dom{\mathfrak a},
\end{equation}
where $G$ is a bounded selfadjoint operator in the subspace
$\overline{\ran{A_R}}$ and $||G||\le \tan\alpha$ iff ${\mathfrak a}$ is
$\alpha$-sectorial.
The operator $A_R$ one can consider as the half of the form--sum \cite{Kato}, i.e., ($\dot+$ denotes the form-sum)
$A_R=\half(A\dot+A^*).$ 

By the first representation theorem \cite{Kato}, %}
the associated $m$-sectorial operators $A$ and $A^*$ are given by
\begin{equation}\label{sectt21}
\begin{array}{l}
A =A^\half_R (I+i G)A^\half_R,\; \dom A=\left\{u\in\dom A^\half_R :(I+i G)A^\half_R u\in\dom A^\half_R  \right\}, \\
A^*=A^\half_R (I-i G)A^\half_R,\; \dom A^*=\left\{\phi\in\dom A^\half_R :(I-i G)A^\half_R \phi\in\dom A^\half_R \right\}.
\end{array}
\end{equation}
These relations yield that
\[\begin{array}{l}%%$%%%\[%\begin{equation}\label{sectt31}
\dom A\cap\dom A^*=\left\{u\in\dom A_R :\;GA^\half_R  u\in\dom A^\half_R \right\},\\[2mm]
\RE (A) u:=\cfrac{1}{2}(A+A^*) u=A_R  u\;\forall u\in \dom A\cap \dom A^*,\\[2mm]
\dom A_R\supseteq\dom A\cap\dom A^*$, $A_R\supseteq\RE (A),
\end{array}
\]
 and
\begin{equation}\label{zlhf}
\ker A=\ker A^*=\ker A_R.
\end{equation}
If $A$ is an $m$-accretive (respectively, $m-\alpha$-sectorial) operator and $\ker A=\{0\}$, then the inverse operator $A^{-1}$ is $m$-accretive (respectively, $m-\alpha$ sectorial) as well.

An $m$-sectorial operator $A$ is coercive if and only if the operator $A_R$ is positive definite. In this case
 \[
A^{-1}=A_R^{-\half}(I+iG)^{-1}A_R^{-\half}  %%%=L^{-1}(I+G^2)^{-\half}(I-iG)(I+G^2)^{-\half}L^{*-1}.
\]
and
\[
(A^{-1})_R=\RE (A^{-1})=A_R^{-\half}(I+G^2)^{-1}A_R^{-\half},\; ((A^{-1})_R)^{-1}=A_R^{\half}(I+G^2)A_R^{\half}.
\]
Hence, for $m-\alpha$-sectorial $A$ we get in the sense of quadratic forms \cite[Chapter VI, § 2, Section 5, Theorem 2.21]{Ka}
\[
A_R\le (\RE (A^{-1}))^{-1}\le \cfrac{1}{\cos^2\alpha} A_R
\]
and then
\begin{equation} \label{hfdhtfk}
\dom A_R^\half=\dom(\RE (A^{-1}))^{-\half}=\ran(\RE (A^{-1}))^\half.
\end{equation}

%%%%%%%%%%%%%%%%%%%%%%%%%%%%%%%%%%%%%%%%%%%%%

\section{Contractions of the class $\wt C_\sH$} \label{jklckfc}
As is well known (see e.g., \cite{SF}), if $A$ is $m$-accretive operator, then the linear fractional transformation
\[
Z:=(I-A)(I+A)^{-1}
\]
is a contraction. Conversely, if $Z$ is a contraction, then the linear relation \cite{Ar}
\begin{equation}\label{rhtqy}
A:=-I+2(I+Z)^{-1}=\left\{\left\{(I+Z)h,(I-Z)h\right\}: h\in\sH\right\}
\end{equation}
is $m$-accretive. Moreover, $A$ in \eqref{rhtqy} is  operator if and only if $\ker(I+Z)=\{0\}$ and if this is the case, then
\[
(Af,f)=\left((I+Z^*)(I-Z)h,h\right)=\left((I-Z^*Z)h,h\right)-((Z-Z^*)h,h),\; f=(I+Z)h.
\]
The operator $A$ is bounded if and only if $\ran (I+Z)=\sH$.

Note that for any contraction $Z$ holds the equality $\ker (I-Z)=\ker (I-Z^*)$ (see \cite[Chapter I, Proposition 3.1] {SF}).

The lemma below will be used in the next sections.
\begin{lemma}\label{gjcktte}
(1) Let $Z$ be a bounded operator in the Hilbert space $\sH$. Suppose $\ker (I-Z)=\ker (I+Z)=\{0\}$.
Then
$$\ran (I-Z)\cap\ran (I+Z)=\ran (I-Z^2),$$
and, therefore
the equality $\ran (I-Z^2)=\sH$ is equivalent to the equalities $\ran (I-Z)=\ran (I+Z)=\sH.$

Moreover,
\begin{enumerate}
\def\labelenumi{\rm (\roman{enumi})}
\item [\rm (a)] if $\ran (I-Z)\subseteq\ran (I+Z)$, then $\ran(I+Z)=\sH$,
\item [\rm (b)]if $\ran(I-Z)=\ran (I+Z)$, then $\ran(I-Z)=\ran(I+Z)=\sH.$
\end{enumerate}

(2) If $X$ is a selfadjoint contraction and $\ker (I-X)=\ker(I+X)=\{0\},$ then
\[
\ran (I-X)^\half\cap\ran(I+X)^\half=\ran(I-X^2)^\half.
\]
\end{lemma}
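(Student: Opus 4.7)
The plan for part (1) is to exploit the factorization $I - Z^2 = (I-Z)(I+Z) = (I+Z)(I-Z)$. One inclusion is immediate: any element $(I-Z^2)w$ lies in both $\ran(I-Z)$ and $\ran(I+Z)$. For the reverse inclusion, given $y = (I-Z)u = (I+Z)v$, I would subtract to obtain $u - v = Z(u+v)$, whence $2u = (u+v) + Z(u+v) = (I+Z)(u+v)$. Setting $w = \half(u+v)$, the factorization then delivers $(I-Z^2)w = (I-Z)(I+Z)w = (I-Z)u = y$.

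Items (a), (b), and the ``Moreover'' assertion are consequences of this main identity. For (a), assume $\ran(I-Z) \subseteq \ran(I+Z)$. Then
\[
\ran(I-Z) = \ran(I-Z) \cap \ran(I+Z) = \ran(I-Z^2) = (I-Z)\ran(I+Z),
\]
and since $\ker(I-Z) = \{0\}$, this forces $\ran(I+Z) = \sH$. Item (b) is immediate from (a). The ``Moreover'' statement also follows from the main identity: $\ran(I-Z^2) = \ran(I-Z) \cap \ran(I+Z)$ equals $\sH$ iff both ranges equal $\sH$.

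For part (2), I would invoke the spectral theorem for the selfadjoint contraction $X$, whose spectral measure $E$ is supported on $[-1,1]$ with $E(\{\pm 1\}) = 0$ by hypothesis. The commuting nonnegative selfadjoint operators $I - X$, $I + X$, $I - X^2$ have trivial kernel, and the scalar identity $\sqrt{1-\lambda^2} = \sqrt{1-\lambda}\sqrt{1+\lambda}$ yields, via functional calculus, $(I - X^2)^{\half} = (I-X)^{\half}(I+X)^{\half}$; hence $\ran((I-X^2)^{\half}) \subseteq \ran((I-X)^{\half}) \cap \ran((I+X)^{\half})$ is immediate. The reverse inclusion is the main obstacle, and the plan is to use the standard spectral characterization: for a nonnegative selfadjoint $T$ with trivial kernel, $y \in \ran(T^{\half})$ iff $\int \lambda^{-1}\, d\|E_T(\lambda)y\|^2 < \infty$. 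The hypothesis $y \in \ran((I-X)^{\half}) \cap \ran((I+X)^{\half})$ then translates to finiteness of both $\int_{-1}^1 (1 - \lambda)^{-1}\, d\|E(\lambda)y\|^2$ and $\int_{-1}^1 (1 + \lambda)^{-1}\, d\|E(\lambda)y\|^2$, and the partial fractions identity
\[
\frac{2}{1-\lambda^2} = \frac{1}{1-\lambda} + \frac{1}{1+\lambda}
\]
combines these to give $\int (1-\lambda^2)^{-1}\, d\|E(\lambda)y\|^2 < \infty$, i.e., $y \in \ran((I-X^2)^{\half})$.
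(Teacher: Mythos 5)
Your proof is correct and follows essentially the same route as the paper: the same algebraic manipulation (setting $\f=u-v$, $\psi=u+v$ so that $\f=Z\psi$) for part (1) and its consequences (a), (b), and the same spectral-calculus argument with the partial-fractions identity $\frac{1}{1-\lambda^2}=\frac12\bigl(\frac{1}{1-\lambda}+\frac{1}{1+\lambda}\bigr)$ for part (2). The only cosmetic difference is that you split part (2) into two inclusions (handling one by the factorization $(I-X^2)^{\half}=(I-X)^{\half}(I+X)^{\half}$), whereas the paper gets both directions at once from the equivalence of the integrability conditions.
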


\begin{proof}
(1) Since
\[
I-Z^2=(I-Z)(I+Z)=(I+Z)(I-Z),
\]
the inclusion $h\in\ran (I-Z^2)$ implies inclusions $h\in\ran(I-Z)$ and $h\in\ran (I+Z)$, i.e., $h\in \ran (I-Z)\cap\ran (I+Z)$.

Assume that $h\in \ran (I-Z)\cap\ran (I+Z)$, i.e., $h=(I-Z)f=(I+Z)g$.
Set
\[
\f:=f-g,\;\psi:= f+g
\]
Then
\[
\f=Z\psi,\; f=\half(I+Z)\psi,\; g=\half(I-Z)\psi.
\]
It follows that $h=\half(I-Z^2)\psi,$ i.e., $h\in\ran (I-Z^2)$.

Suppose that $\ran (I-Z)\subseteq\ran (I+Z)$, then
$$\ran (I-Z^2)=\ran (I+Z)\cap\ran (I-Z)=\ran (I-Z).%%%=\ran (I+Z).
$$
Hence, from the equality $\ran (I-Z^2)=(I-Z)\ran (I+Z)$
we get $\ran (I+Z)=\sH$.
Consequently, the equality $\ran(I-Z)=\ran (I+Z)$ yields that $\ran(I-Z)=\ran (I+Z)=\sH.$

(2) Let
 $E_X(t),$ $t\in[-1,1]$ be the orthogonal spectral function of $X$. Then due to the equality $(1-t^2)^{-1}=\half\left((1-t)^{-1}+(1+t)^{-1} \right)$ we get the equivalence for a vector $f\in\sH\setminus\{0\}$
\[
\int\limits_{-1}^{1}\cfrac{d(E_X(t)f,f)}{1-t^2}<\infty\Longleftrightarrow \left\{\begin{array}{l}\int\limits_{-1}^{1}\cfrac{d(E_X(t)f,f)}{1-t}<\infty\\
\int\limits_{-1}^{1}\cfrac{d(E_X(t)f,f)}{1+t}<\infty \end{array}\right..
\]
Hence
\[
f\in\dom(I-X^2)^{-\half}\quad\mbox{if and only if}\quad f\in\dom (I-X)^{-\half}\cap\dom (I+X)^{-\half},
\]
i.e., $\ran (I-X)^\half\cap\ran(I+X)^\half=\ran(I-X^2)^\half$.
\end{proof}
\begin{remark}\label{cntgtym}
If $Z$ is a bounded operator, then the factorization
\[
I-Z^{2n+1}=(I-Z)(I+Z+\ldots+Z^{2n}), \; n\in\dN,
\]
yields the implication
\[
\ran (I-Z)\ne \sH\Longrightarrow \ran (I-Z^{2n+1})\ne \sH.
\]

\end{remark}
Further we need contractions which are linear fractional transformations of maximal sectorial linear  operators or linear relations.
\begin{definition} \label{jghtlc} \cite{Arl1987}.
Let $\alpha\in (0,\pi/2)$. A linear operator  $Z$ on the Hilbert space $\sH$  is said to belong to the class
$C_\sH(\alpha)$ if
\[%%%
||Z\sin\alpha\pm i\cos\alpha\, I||\le 1.
\]%%%
\end{definition}
The next proposition immediately follows from Definition \ref{jghtlc}
\begin{proposition} \label{cdjqcndf}(cf. \cite{Arl1987,Arl1991})
The following are equivalent:%\begin{equation}\label{cdjqcndf}
\begin{enumerate}
\def\labelenumi{\rm (\roman{enumi})}
\item
$Z\in C_\sH(\alpha)$;
\item $\pm Z^*\in C_\sH(\alpha)$;
\item $Z$ is a contraction and
\begin{equation}
\label{CA2} 2|\IM (Zf,f)|\le\tan\alpha(||f||^2-||Zf||^2),\;
f\in \sH;
\end{equation}
\item holds the inequality
\[
2|\IM (Zf,f)|\le\tan\alpha(||f||^2-||Z^*f||^2),\;
f\in \sH.
\]
\item  the bounded operator $S:=(I+Z)(I-Z^*)=I-ZZ^*+Z-Z^*$ is $\alpha$-sectorial;
\item the linear relation $A$ in \eqref{rhtqy}
is $m-\alpha$-sectorial.
\end{enumerate}
\end{proposition}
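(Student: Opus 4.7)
The plan is to show that each of (ii)--(vi) is equivalent to (i) by direct algebraic manipulation, since all of the claimed equivalences reduce to the same angular inequality after one or two computations. The only genuinely new ingredient beyond Definition~\ref{jghtlc} is the observation that passing from $Z$ to $Z^{*}$ flips the sign of $\IM(Z\cdot,\cdot)$ but leaves its absolute value intact.

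First, for (i)$\Leftrightarrow$(iii), I would expand the squared norm for an arbitrary $f\in\sH$:
\[
\|(Z\sin\alpha\pm i\cos\alpha\,I)f\|^{2}
=\sin^{2}\alpha\,\|Zf\|^{2}+\cos^{2}\alpha\,\|f\|^{2}\pm 2\sin\alpha\cos\alpha\,\IM(Zf,f),
\]
using that $\RE(Zf,\pm if)=\pm\IM(Zf,f)$. Subtracting $\|f\|^{2}$ and dividing by $\sin\alpha\cos\alpha$, the inequality $\|Z\sin\alpha\pm i\cos\alpha\,I\|\le 1$ becomes exactly
\[
\pm 2\,\IM(Zf,f)\le\tan\alpha\,(\|f\|^{2}-\|Zf\|^{2}),
\]
for all $f$ and both signs, which is \eqref{CA2}. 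The same inequality forces $\|Zf\|\le\|f\|$, i.e.\ $Z$ is automatically a contraction. For (i)$\Leftrightarrow$(ii), I would apply the identical expansion to $Z^{*}$ and use $\IM(Z^{*}f,f)=-\IM(Zf,f)$; this makes the two sign choices interchangeable (so that $\pm Z^{*}\in C_\sH(\alpha)$ is a single condition) and converts the analogue of \eqref{CA2} for $Z^{*}$ into condition (iv), giving (ii)$\Leftrightarrow$(iv) simultaneously.

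For (v), a one-line expansion gives
\[
(Sf,f)=\|f\|^{2}-(Z^{*}f,Z^{*}f)+(Zf,f)-(Z^{*}f,f)=\|f\|^{2}-\|Z^{*}f\|^{2}+2i\,\IM(Zf,f),
\]
so $\RE(Sf,f)=\|f\|^{2}-\|Z^{*}f\|^{2}$ and $\IM(Sf,f)=2\IM(Zf,f)$, and the $\alpha$-sectoriality $|\IM(Sf,f)|\le\tan\alpha\,\RE(Sf,f)$ is precisely (iv). For (vi), I recall from \eqref{rhtqy} that a typical element of the relation $A$ has the form $\{(I+Z)h,(I-Z)h\}$, and I would compute
\[
((I-Z)h,(I+Z)h)=((I+Z^{*})(I-Z)h,h)=\|h\|^{2}-\|Zh\|^{2}-2i\,\IM(Zh,h).
\]
Since the sector condition depends only on the argument and is therefore insensitive to the norm of the representative $(I+Z)h$, $\alpha$-sectoriality of $A$ is equivalent to $2|\IM(Zh,h)|\le\tan\alpha(\|h\|^{2}-\|Zh\|^{2})$, i.e.\ to (iii); $m$-accretivity of $A$ is automatic once $Z$ is a contraction, so no further argument is needed for the maximality.

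There is no real obstacle here: the proposition is a bookkeeping exercise, and the only place to be careful is (vi), where one must remember that sectoriality for a linear relation is stated via its pairs $\{f,\phi\}$ without normalizing $\|f\|=1$, so that every $h\in\sH$ contributes a point of the numerical range regardless of whether $(I+Z)h$ vanishes or not. Chaining the equivalences yields the circle (i)$\Leftrightarrow$(iii)$\Leftrightarrow$(vi) and (ii)$\Leftrightarrow$(iv)$\Leftrightarrow$(v), together with (i)$\Leftrightarrow$(ii) from the second paragraph, closing the loop.
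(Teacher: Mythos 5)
Your computations for (i)$\Leftrightarrow$(iii), (ii)$\Leftrightarrow$(iv), (iv)$\Leftrightarrow$(v) and (iii)$\Leftrightarrow$(vi) are all correct, and this is exactly the kind of direct verification the paper has in mind (it offers no proof, stating that the proposition ``immediately follows'' from Definition \ref{jghtlc}); your care with the relation in (vi) --- homogeneity of the sector inequality and the fact that $m$-accretivity of $A$ comes for free once $Z$ is a contraction --- is also fine.

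There is, however, one slip in how you close the loop. The justification you offer for (i)$\Leftrightarrow$(ii) (expand $\|(Z^*\sin\alpha\pm i\cos\alpha\,I)f\|^2$ and use $\IM(Z^*f,f)=-\IM(Zf,f)$) proves only (ii)$\Leftrightarrow$(iv), which you already list separately; it does not compare $Z$ with $Z^*$, because (iii) and (iv) differ in having $\|Zf\|^2$ versus $\|Z^*f\|^2$ on the right-hand side and are not pointwise identical. So as written your two blocks $\{$(i),(iii),(vi)$\}$ and $\{$(ii),(iv),(v)$\}$ are never linked. The missing bridge is the one-line observation that $\bigl(Z\sin\alpha\pm i\cos\alpha\,I\bigr)^*=Z^*\sin\alpha\mp i\cos\alpha\,I$ together with $\|T\|=\|T^*\|$: since Definition \ref{jghtlc} imposes both signs, this gives $Z\in C_\sH(\alpha)\Leftrightarrow Z^*\in C_\sH(\alpha)\Leftrightarrow -Z^*\in C_\sH(\alpha)$, i.e.\ (i)$\Leftrightarrow$(ii), after which all six statements are equivalent. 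With that sentence inserted, your proof is complete.
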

Due to
\eqref{CA2} it is natural to denote the set of all selfadjoint
contractions by $C_\sH(0)$. Clearly, $C_\sH(0)=\underset{\alpha\in(0,\pi/2)}{\bigcap}C_\sH(\alpha).$
If $||Z||=\delta<1$, then $Z\in C_\sH(\alpha_\delta)$, where $\alpha_\delta=2\tan^{-1}\delta.$

The numerical range $W(Z)$ of $Z\in C_\sH(\alpha)$ is contained in the intersection $C_\dC(\alpha)$
of two disks on  the complex plane:
\begin{equation}\label{gkjcvy}
C_\dC(\alpha)=\left\{z\in\dC:|z\sin\alpha+i\cos\alpha|\le 1 \wedge|z\sin\alpha-i\cos\alpha|\le 1\right\}.
\end{equation}
Therefore, the operators $I+Z$ and $I-Z$ are sectorial with vertex at the origin and semi-angle $\alpha$.
Besides, since $|\IM z|\le \tan{\frac{\alpha}{2}}$ for all $z\in C_\dC(\alpha)$, the operator $Z=iY$, where $Y$ is a selfadjoint contraction, belongs to $C_\sH(\alpha)$
if and only if $||Y||\le \tan{\frac{\alpha}{2}}$.

Set
\[%%%
\wt C_\sH:=\underset{\alpha\in[0,\pi/2)}{\bigcup}C_\sH(\alpha).
\]%%%%
Note that due to \eqref{CA2}, an isometric operator belongs to the class $\wt C_\sH$ if and only if it is additionaly selfadjoint.

 Further for a contraction $Z$ we will use the notation $D_Z:=(I-Z^*Z)^\half.$ As is known \cite{SF} the commutation relations $ZD_Z = D_{Z^*}Z,$ $Z^*D_{Z^*}=D_{Z}Z^*$ hold.
Note that one of the equalities $\ran D_Z=\sH$ or $\ran D_{Z^*}=\sH$ is equivalent to the condition $||Z||<1.$

Properties of operators of the class $\wt C_\sH$ were studied in
\cite{Arl1987, Arl1991}. In particular, the following assertions are valid.
\begin{theorem} \label{cnfhzntj} \cite{Arl1987}.
(1) If $Z\in \wt C_\sH$, then subspaces $\ker D_{Z}$ and $\cran D_{Z}$ reduce $Z,$ the restriction $Z\uphar\ker D_{Z}$ is selfadjoint and unitary operator, the restriction
$Z\uphar\cran D_{Z}$ belongs to the class $C_{00}$ \cite{SF}, i.e.,
 $$\lim\limits_{n\to \infty}Z^nf=\lim\limits_{n\to \infty}Z^{*n}f=0\;\; \forall f\in\cran D_{Z}.$$
(2) If $Z\in C_\sH(\alpha)$, then
$Z^n\in  C_\sH(\alpha)$ and
\begin{equation}\label{defrav}
\ran D_{Z^n}=\ran D_{Z^{*n}}=\ran D_{\RE( Z)}\;\;\forall n\in\dN.
\end{equation}

(3) If $Z_1,Z_2\in C_\sH(\alpha),$ then $\half(Z_1 Z_2+Z_2Z_1)\in C_\sH(\alpha),$ in particular, if $Z_1$ and $Z_2$ commute, then $Z_1,Z_2\in C_\sH(\alpha)\Longrightarrow Z_1Z_2\in C_\sH(\alpha)$.

(4) If $A$ is $m-\alpha$-sectorial operator, then $\exp(-tA)\in C_\sH(\alpha)$ for all $t\in\dR_+$.
\end{theorem}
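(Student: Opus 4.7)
The four parts rest on the two equivalent descriptions of $C_\sH(\alpha)$ supplied by Proposition~\ref{cdjqcndf}: the disc estimate $\|Z\sin\alpha\pm i\cos\alpha I\|\le 1$ and the energy bound \eqref{CA2}. A second recurring device is the identity
\[
\RE\bigl((I+Z)(I-Z^*)\bigr)=I-ZZ^*=D_{Z^*}^2,\qquad \RE\bigl((I-Z^*)(I+Z)\bigr)=D_Z^2,
\]
which together with Proposition~\ref{cdjqcndf}(v) produces a bounded $\alpha$-sectorial operator with real part $D_{Z^*}^2$; the second representation theorem \eqref{secttt31} then yields a factorisation $(I+Z)(I-Z^*)=D_{Z^*}(I+iG)D_{Z^*}$ with a bounded selfadjoint $G$, $\|G\|\le\tan\alpha$. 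The logical order I would follow is (3)$\Rightarrow$(4)$\Rightarrow$(2)$\Rightarrow$(1).

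For (3) I would verify the disc inequality for $W=\tfrac12(Z_1Z_2+Z_2Z_1)$ by symmetrising the product $(Z_1\sin\alpha\pm i\cos\alpha I)(Z_2\sin\alpha\pm i\cos\alpha I)$ so that the non-commutative commutator $[Z_1,Z_2]$ cancels; the scalar prototype $C_\dC(\alpha)\cdot C_\dC(\alpha)\subseteq C_\dC(\alpha)$ reduces via \eqref{CA2} to the trivial inequality $(1-|z|)(1-|w|)\ge 0$ on the closed unit disc, and the lift to the symmetrised operator product amounts to averaging the two orderings. For (4) I would combine (3) with the exponential formula $e^{-tA}=\lim_{n\to\infty}(I+\tfrac{t}{n}A)^{-n}$: a short direct check shows $(I+sA)^{-1}\in C_\sH(\alpha)$ because the identity $1-\|(I+sA)^{-1}u\|^2=2s\RE(Af,f)+s^2\|Af\|^2$ (with $f=(I+sA)^{-1}u$) together with the $\alpha$-sectoriality of $A$ yields \eqref{CA2}; iterating (3) gives $(I+\tfrac{t}{n}A)^{-n}\in C_\sH(\alpha)$, and since the disc estimate is preserved under strong-operator limits the conclusion follows.

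For (2) I would first derive the identity
\[
2D_{\RE Z}^2=D_Z^2+D_{Z^*}^2+2(\IM Z)^2
\]
by expanding $4I-(Z+Z^*)^2$ and using $(Z-Z^*)^2=-4(\IM Z)^2$; Douglas' lemma then gives $\ran D_Z\cup\ran D_{Z^*}\subseteq\ran D_{\RE Z}$ at once. The reverse inclusion is the delicate step: the factorisation $(I+Z)(I-Z^*)=D_{Z^*}(I+iG)D_{Z^*}$ from the opening observation shows that $I-Z^*$ maps $\ran(I+Z)$ into $\ran D_{Z^*}$, and its companion (for $(I-Z)(I+Z^*)$) yields the analogous statement for $D_Z$; the fact that $\ran(I+Z)$ and $\ran(I-Z)$ are both dense (since $W(Z)\subset C_\dC(\alpha)$ excludes $\pm 1$ from the strict interior) then places $\ran D_{\RE Z}$ inside $\ran D_Z\cap\ran D_{Z^*}$. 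The extension to arbitrary $n$ comes from applying the $n=1$ case to $Z^n\in C_\sH(\alpha)$ (via (3)) and using the telescoping identity $I-Z^{*n}Z^n=\sum_{k=0}^{n-1}Z^{*k}(I-Z^*Z)Z^k$ to sandwich $\ran D_{Z^n}$ between copies of $\ran D_Z$.

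For (1), the condition $f\in\ker D_Z$ makes the right-hand side of \eqref{CA2} zero, hence $(Zf,f)\in\dR$; since $Z$ is also isometric on $\ker D_Z$, a polarisation argument forces $Z\uphar\ker D_Z$ to be a selfadjoint unitary. Part (2) with $n=1$ gives $\ker D_Z=\ker D_{Z^*}$, so $\ker D_Z$ and $\cran D_Z$ reduce $Z$, and the Sz.-Nagy--Foias canonical decomposition \cite{SF} places the unitary direct summand of $Z$ precisely on $\ker D_Z$. Consequently $Z\uphar\cran D_Z$ is completely nonunitary; the decreasing sequence $\|Z^nf\|$ has limit equal to the norm of the unitary component of $f$, which vanishes on $\cran D_Z$, so $Z^nf\to 0$, and the symmetric argument handles $Z^{*n}$. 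The main obstacle I anticipate is precisely this last strong-convergence step: the form bound \eqref{CA2} alone is not sufficient, and either the Sz.-Nagy--Foias structure theory or the operator-level factorisation of $(I+Z)(I-Z^*)$ from (2) must be invoked to upgrade the canonical decomposition into the $C_{00}$ behaviour on $\cran D_Z$.
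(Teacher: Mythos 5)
The paper gives no proof of this theorem at all: it is imported verbatim from \cite{Arl1987}, so there is nothing internal to compare your argument with, and it has to stand on its own. Parts of it do: the resolvent computation showing $(I+sA)^{-1}\in C_\sH(\alpha)$ (via $\|u\|^2-\|f\|^2=2s\RE(Af,f)+s^2\|Af\|^2$, $f=(I+sA)^{-1}u$) and the passage to $e^{-tA}$ through Euler approximants and strong limits is correct, \emph{granted} stability of the class under powers; the identity $2D_{\RE Z}^2=D_Z^2+D_{Z^*}^2+2(\IM Z)^2$ plus Douglas' lemma does give the easy inclusions $\ran D_Z\cup\ran D_{Z^*}\subseteq\ran D_{\RE Z}$; and the first half of (1) can be completed from the operator form $\pm2\IM Z\le\tan\alpha\,D_Z^2$ of \eqref{CA2} together with $D_{Z^*}Z=ZD_Z$. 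But the three steps that carry the actual content of \cite{Arl1987} are not proved, and two of the arguments you give for them are wrong, not merely incomplete.

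Concretely: in (3), ``averaging the two orderings'' does not lift the scalar inequality $(1-|z|)(1-|w|)\ge0$: the symmetrised products of $Z_j\sin\alpha\pm i\cos\alpha I$ give $W\sin^2\alpha+\cos^2\alpha\,I$, or expressions containing $Z_1+Z_2$, never the required operators $W\sin\alpha\pm i\cos\alpha\,I$, so no proof of the disc estimate for $W=\half(Z_1Z_2+Z_2Z_1)$ is actually on the table — and both (4) and the power statement in (2) lean on this. In (2), the reverse inclusion $\ran D_{\RE Z}\subseteq\ran D_Z\cap\ran D_{Z^*}$ is the hard half; by Douglas it requires two-sided operator bounds such as $D_{\RE Z}^2\le c\,D_Z^2$, and your route fails twice: $\ran(I\pm Z)$ need not be dense for $Z\in C_\sH(\alpha)$ (take $Z=I$, or any member with eigenvalue $\pm1$), and density of a range never yields an inclusion between non-closed operator ranges. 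The identity \eqref{gjktpyj} used in the proof of Theorem \ref{vyjujtr} is the right kind of tool, but that proof itself invokes \eqref{defrav}, so it cannot be borrowed without circularity; also note that applying the $n=1$ case to $Z^n$ gives $\ran D_{\RE(Z^n)}$, not $\ran D_{\RE Z}$, so the telescoping step needs the additional equality $\ran D_{Z^n}=\ran D_Z$. In (1), the assertion that $\lim_n\|Z^nf\|$ equals the norm of the unitary component of $f$ is false (the unilateral shift is completely non-unitary yet $\|Z^nf\|=\|f\|$); complete non-unitarity alone never gives $C_{00}$, as you half-acknowledge. A correct route: by the first half of (1) the unitary subspace is exactly $\ker D_Z$, so $Z\uphar\cran D_Z$ is c.n.u.\ and still lies in $C_\sH(\alpha)$; its numerical range, hence its spectrum, lies in the lens $C_\dC(\alpha)$ of \eqref{gkjcvy}, which meets the unit circle only at $\pm1$, and a c.n.u.\ contraction whose spectrum meets the circle in a set of measure zero is of class $C_{00}$ \cite{SF}. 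As written, (3), the reverse inclusion in (2), and the $C_{00}$ claim in (1) remain unproved.
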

Note that if $A$ is $m-\alpha$-sectorial operator then, for each $t\in\dR_+$ the numerical range $W(\exp(-tA))$ is contained in the square of the set $C_\dC(\alpha)$ (see  \cite[Theorem 3.4]{ArlZag2010}), i.e.,
\[
W(\exp(-tA))\subseteq\Omega(\alpha):=\{z^2: z\in C_\dC(\alpha)\}.
\]

The next theorem will be used in Section \ref{rjhtymrd}.
\begin{theorem}\label{vyjujtr}
 The following statements are equivalent for $Z\in\wt C_\sH$:
\begin{enumerate}
\def\labelenumi{\rm (\roman{enumi})}
\item $||Z||<1$;
\item $\ran (I+Z)\subseteq\ran D_{Z^*}$ and $\ran (I-Z)\subseteq\ran D_{Z^*}$;
\item $\ran (I+Z)\supseteq\ran D_{Z^*}$ and $\ran (I-Z)\supseteq\ran D_{Z^*}$;
\item $\ran(I+Z)=\ran (I-Z)=\sH$.
\end{enumerate}
\end{theorem}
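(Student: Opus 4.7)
The plan is to show (i) implies (ii), (iii), (iv) directly and to close each converse implication separately. If $\|Z\|<1$, then $D_{Z^*}^2=I-ZZ^*\ge(1-\|Z\|^2)I$ is coercive, so $D_{Z^*}$ is boundedly invertible and $\ran D_{Z^*}=\sH$; simultaneously the Neumann series gives $(I\pm Z)^{-1}\in\cB(\sH)$, so $\ran(I\pm Z)=\sH$. All three range conditions follow at once.

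For (ii) $\Rightarrow$ (i), the identity $f=\half[(I+Z)f+(I-Z)f]$ shows $\sH\subseteq\ran(I+Z)+\ran(I-Z)\subseteq\ran D_{Z^*}$, forcing $\ran D_{Z^*}=\sH$; since $D_{Z^*}\ge 0$ is selfadjoint with full range, the open mapping theorem gives $D_{Z^*}$ boundedly invertible, so $\|Z\|<1$.

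For (iv) $\Rightarrow$ (i), the identity $\ker(I\pm Z)=\ker(I\pm Z^*)$ valid for any contraction, combined with (iv) and the closed range theorem, gives $\ker(I\pm Z)=\{0\}$, so $I\pm Z$ are bijective and boundedly invertible. The Cayley transform $A:=(I-Z)(I+Z)^{-1}$ is then a bounded $m$-$\alpha$-sectorial operator with bounded inverse $A^{-1}=(I+Z)(I-Z)^{-1}$. Using $(I+A)Z=I-A$ and $Z^*(I+A^*)=I-A^*$, one verifies
\[
(I+A)(I-ZZ^*)(I+A^*)=(I+A)(I+A^*)-(I-A)(I-A^*)=2(A+A^*)=4A_R,
\]
so $I-ZZ^*=4(I+A)^{-1}A_R(I+A^*)^{-1}$ and the matter reduces to coercivity of $A_R$. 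With the Kato factorization $A=A_R^{\half}(I+iG)A_R^{\half}$, $\|G\|\le\tan\alpha$, if $A_R$ were not coercive a sequence $\|f_n\|=1$ with $A_Rf_n\to 0$ would satisfy $A_R^{\half}f_n\to 0$ and hence $Af_n\to 0$, contradicting the lower bound $\|Af\|\ge c\|f\|$ coming from bounded invertibility of $A$.

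The implication (iii) $\Rightarrow$ (i) is the delicate part and the main obstacle. My plan is to decompose $Z=Z_0\oplus Z_1$ on $\ker D_Z\oplus\cran D_Z$ via Theorem \ref{cnfhzntj}(1) and observe that on $\ker D_Z$ both inclusions in (iii) are vacuous, reducing matters to the $C_{00}$ part $Z_1$ where $\ker D_{Z_1^*}=\{0\}$ and so $\ran D_{Z_1^*}$ is dense. Since $\ker(I\pm Z_1)=\{0\}$ for a $C_{00}$ contraction, Lemma \ref{gjcktte}(1) applies and gives $\ran D_{Z_1^*}\subseteq\ran(I+Z_1)\cap\ran(I-Z_1)=\ran(I-Z_1^2)$; by the closed graph theorem the inclusions $\ran D_{Z_1^*}\subseteq\ran(I\pm Z_1)$ also yield bounded operators $Y_\pm:=(I\pm Z_1)^{-1}D_{Z_1^*}$. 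The difficulty is to upgrade this qualitative information into the quantitative coercivity $\ran D_{Z^*}=\sH$; I expect to use the identity $\ran D_{Z^2}=\ran D_{Z^*}$ from Theorem \ref{cnfhzntj}(2) to apply the Cayley-type argument of the previous paragraph to $Z^2$ in place of $Z$, from which coercivity propagates back to $Z$ via the factorization $I-Z^2=(I-Z)(I+Z)$.
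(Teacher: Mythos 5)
Your arguments for (i)$\Rightarrow$(ii),(iii),(iv), for (ii)$\Rightarrow$(i) and for (iv)$\Rightarrow$(i) are correct, and they differ from the paper's in an interesting way: the one-line identity $f=\half\left[(I+Z)f+(I-Z)f\right]$ settles (ii)$\Rightarrow$(i) without Douglas' lemma or the identity \eqref{gjktpyj} (and without using $Z\in\wt C_\sH$ at all), while your (iv)$\Rightarrow$(i) goes through the Cayley transform $A=(I-Z)(I+Z)^{-1}$ and coercivity of $\RE(A)$, whereas the paper deduces $\ran(I\pm\RE(Z))^\half=\sH$ from the lower bounds on $I\pm Z^*$ and then applies Lemma \ref{gjcktte} and \eqref{defrav}.

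The genuine gap is (iii)$\Rightarrow$(i), for which you offer only a plan, and the plan cannot close. Decomposing $\sH=\ker D_Z\oplus\cran D_Z$ and noting that (iii) is vacuous on $\ker D_Z$ is not a reduction: the target (i) means $\|Z_1\|<1$ \emph{together with} $\ker D_Z=\{0\}$, and hypothesis (iii) puts no constraint whatsoever on the unitary summand $Z\uphar\ker D_Z$. Indeed, for the selfadjoint contraction $Z=I_{\sH_0}\oplus 0_{\sH_1}\in C_\sH(0)$ both inclusions of (iii) hold while $\|Z\|=1$; so no argument carried out on the $C_{00}$ part alone can yield (i), and the implication in fact hinges on the injectivity $\ker(I\pm Z)=\{0\}$, which the paper's proof uses tacitly in its cancellation step and which is automatic in the application (Theorem \ref{sqdomrav22}). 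The paper's actual route is quite different from yours: from \eqref{gjktpyj} and Douglas' lemma it derives $\ran(I\pm\RE(Z))^\half=\ran(I\pm Z)$, then inserts the sectorial factorizations $I\pm Z=(I\pm\RE(Z))^\half(I+iF_\pm)(I\pm\RE(Z))^\half$ and cancels to conclude $\ran(I\pm\RE(Z))^\half=\sH$, hence $\pm1\in\rho(\RE(Z))$ and $\|Z\|<1$ via \eqref{defrav}. Your closing idea of applying the Cayley argument to $Z^2$ cannot even start, since that argument requires $\ran(I-Z^2)=\sH$, i.e.\ essentially condition (iv), which is precisely what is unavailable under (iii); and the closed-graph operators $Y_\pm$ encode only qualitative range inclusions, with no path to the quantitative coercivity of $D_{Z^*}$.
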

\begin{proof}
Clearly
$$
||Z||<1 \Longrightarrow \ran (I+Z)=\ran (I-Z)=\ran D_{Z^*}=\sH.$$ %\ran (I-Z^2)=\sH.$$
Suppose (ii) holds. Then by Douglas' lemma \cite{Doug} we get
\[
||(I+Z^*)f||\le c_1||D_{Z^*}f||,\;||(I-Z^*)f||\le c_2||D_{Z^*}f||\;\;\forall f\in\sH.
\]
Since
\begin{equation}%%%{multline}
\label{gjktpyj}
||(I\pm Z^*)f||^2+||D_{Z^*}f||^2=2\RE\left((I\pm Z)f,f\right)%%%\\
=2||(I\pm \RE (Z))^\half f||^2\;\;\forall f\in \sH,
\end{equation}%%%{multline}
we get
\[
||(I\pm \RE( Z))^\half f||\le b ||D_{Z^*}f||\;\;\forall f\in \sH.
\]
Hence
$$\ran (I+\RE (Z))^\half\subseteq\ran D_{Z^*},\;\ran (I-\RE( Z))^\half\subseteq\ran D_{Z^*}.$$
On the other side equalities \eqref{gjktpyj} and the Douglas' lemma implies the inclusions
\[
\ran D_{Z^*}\subseteq\ran(I-\RE (Z))^\half,\;\ran D_{Z^*}\subseteq\ran(I+ \RE (Z))^\half.
\]
Thus
\[
\ran D_{Z^*}=\ran(I-\RE (Z))^\half=\ran(I+ \RE (Z))^\half.
\]
Because $Z\in \wt C_\sH$, from \eqref{defrav} and Lemma \ref{gjcktte} we get the equalities
\begin{equation}\label{gjnjve}
\ran D_{Z^*}=\ran D_{\RE( Z)}=\ran (I+\RE (Z))^\half=\ran(I-\RE (Z))^\half.
\end{equation}
Since
\[
\ran D_{\RE( Z)}=(I+\RE (Z))^\half\ran(I-\RE (Z))^\half=(I-\RE (Z))^\half\ran(I+\RE (Z))^\half,
\]
equalities in \eqref{gjnjve} imply
\[
\ran(I+\RE (Z))^\half=\ran(I-\RE (Z))^\half=\ran D_{\RE( Z)}=\sH
\]
Hence $\ran D_{ Z^*}=\sH$ and therefore $||Z||<1$. Thus, (ii) implies (i).

Suppose that (iii) holds. Using \eqref{gjktpyj}, Douglas’ lemma and arguing as above, we get the equalities
\begin{equation}\label{ytjblf}
\ran (I+\RE (Z))^\half=\ran (I+Z),\; \ran (I-\RE (Z))^\half=\ran (I-Z).
\end{equation}
Because the operator $Z$ belongs to the class $\wt C_\sH$, the operators $I\pm Z$ are sectorial and, therefore, admit the representations
\[
\begin{array}{l}
I+ Z=(I+\RE (Z))^\half(I+iF_+)(I+\RE (Z))^\half,\\[2mm]
I-Z=(I-\RE (Z))^\half(I+iF_-)(I-\RE (Z))^\half,
\end{array}
\]
where $F_{\pm}$ are bounded selfadjoint operators. Hence and from  \eqref{ytjblf}
\[
\ran (I\pm \RE(Z))^{\half}=(I\pm \RE(Z))^{\half}(I+iF_{\pm})\ran (I\pm \RE(Z))^{\half}.
\]
Consequently, because $\ran (I+iF_{\pm})=\sH$, we obtain the equalities
$$\ran (I+\RE (Z))^\half=\ran (I-\RE (Z))^\half=\sH.$$
It follows that $\pm 1\in\rho(\RE(Z))$. Therefore $\ran D_{\RE(Z)}=\sH$ and \eqref{defrav} implies that $\ran D_{Z}=\sH$. Therefore $||Z||<1.$ Thus, (iii)$\Longrightarrow$ (i).

Suppose (iv) holds. Then there are $c_\pm>0$ such that $||(I\pm Z^*)f||\ge c_\pm\,||f||$ for all $f\in\sH$.
Consequently,  from \eqref{gjktpyj} we get the equalities
$$\ran (I+\RE (Z))^\half=\ran (I-\RE (Z))^\half=\sH.$$
Now Lemma \ref{gjcktte} and  \eqref{defrav} lead to the equalities $ \ran D_{\RE(Z)}=\ran D_{Z^*}=\sH.$ Hence $||Z||<1$. The proof is complete.
\end{proof}

\begin{proposition}\label{rcgj}
Let $Z$ be a contraction in $\sH$. Then the operators
\begin{equation}\label{aeyrwzz}
Z(t):=Z\exp(-t(I-Z^2)),\; t\in\dR_+
\end{equation}
are contractions and if $Z\in C_\sH(\alpha)$, then $Z(t)\in C_\sH(\alpha)$ for each $t\in\dR_+.$

Moreover, if $\ran(I-Z^2)\ne\sH$, then $\ran (I-Z(t)^2)\ne\sH$ for each $t>0$.
\end{proposition}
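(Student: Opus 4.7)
The plan is to treat the three claims in sequence, all of which rely on two observations: $Z$ commutes with every function of $I-Z^2$, and $I-Z^2$ itself is always a bounded $m$-accretive operator on $\sH$.

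First I would verify $m$-accretivity of $I-Z^2$: since $Z$ is a contraction, $|(Z^2f,f)|\le\|Z^2f\|\,\|f\|\le\|f\|^2$, so $\RE((I-Z^2)f,f)\ge 0$; boundedness and full domain then force $m$-accretivity. Hence $\exp(-t(I-Z^2))$ is a contraction on $\sH$ (defined by the convergent exponential series). Since $Z$ commutes with $I-Z^2$, it commutes with $\exp(-t(I-Z^2))$, so
\[
\|Z(t)f\|=\|\exp(-t(I-Z^2))Zf\|\le\|Zf\|\le\|f\|,
\]
which proves the first assertion.

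Next, assume $Z\in C_\sH(\alpha)$. Theorem \ref{cnfhzntj}(2) gives $Z^2\in C_\sH(\alpha)$, and Proposition \ref{cdjqcndf}(iii) yields
\[
2|\IM(Z^2f,f)|\le\tan\alpha\,(\|f\|^2-\|Z^2f\|^2).
\]
The identity
\[
\|f\|^2-\RE(Z^2f,f)=\tfrac12(\|f\|^2-\|Z^2f\|^2)+\tfrac12\|f-Z^2f\|^2
\]
implies $\|f\|^2-\RE(Z^2f,f)\ge\tfrac12(\|f\|^2-\|Z^2f\|^2)$, so
\[
|\IM((I-Z^2)f,f)|\le\tan\alpha\,\RE((I-Z^2)f,f),
\]
i.e.\ $I-Z^2$ is $\alpha$-sectorial, and being bounded it is $m$-$\alpha$-sectorial. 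Theorem \ref{cnfhzntj}(4) then gives $\exp(-t(I-Z^2))\in C_\sH(\alpha)$, and since this operator commutes with $Z\in C_\sH(\alpha)$, Theorem \ref{cnfhzntj}(3) yields $Z(t)\in C_\sH(\alpha)$.

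For the last claim, commutativity gives $Z(t)^2=Z^2\exp(-2t(I-Z^2))$. Setting $Y:=I-Z^2$ and using $I-e^{-2tY}=Y\int_0^{2t}e^{-sY}\,ds$,
\[
I-Z(t)^2=I-(I-Y)e^{-2tY}=(I-e^{-2tY})+Ye^{-2tY}=Y\,M(t),
\]
where $M(t):=\int_0^{2t}e^{-sY}\,ds+e^{-2tY}$ is a bounded operator on $\sH$. Consequently $\ran(I-Z(t)^2)\subseteq\ran Y=\ran(I-Z^2)\ne\sH$. The only step that is not purely mechanical is spotting the factorization $I-Z(t)^2=(I-Z^2)M(t)$; once it is written down the inclusion of ranges is immediate, and everything else reduces to an application of Theorem \ref{cnfhzntj} together with bounded-semigroup generation.
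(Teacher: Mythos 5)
Your proposal is correct and follows essentially the same route as the paper: contractivity of $Z(t)$ via the contraction semigroup generated by the accretive operator $I-Z^2$, membership in $C_\sH(\alpha)$ via $Z^2\in C_\sH(\alpha)$, the $\alpha$-sectoriality of $I-Z^2$, and the product of commuting class-$C_\sH(\alpha)$ operators (Theorem \ref{cnfhzntj}), and the range statement by factoring $I-Z(t)^2=(I-Z^2)M(t)$ with $M(t)$ bounded. Your only deviations are cosmetic: you verify the sectoriality of $I-Z^2$ by a direct identity instead of invoking the numerical-range remark, and you obtain the factorization from the integral formula $I-e^{-2tY}=Y\int_0^{2t}e^{-sY}\,ds$ rather than the paper's power-series manipulation.
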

\begin{proof}
Because $Z^2$ is a contraction, the operator $I-Z^2$ is accretive. It generates contractive one-parameter $C_0$-semigroup $\{\exp(-t(I-Z^2))\}_{t\in\dR_+}$ \cite{EngNag}.
Hence, for each $t\in\dR_+$ the operator $Z(t)$ is a contraction.

Assume $Z\in C_\sH(\alpha)$. Then $Z^2\in C_\sH(\alpha)$ (see Theorem \ref{cnfhzntj}) and, hence, $I-Z^2$ is $\alpha$-sectorial. From Theorem \ref{cnfhzntj}, statement (3) (see also \cite[Theorem 1, Theorem 2]{Arl1987}) it follows  that
$$\exp(-t(I-Z^2))\in C_\sH(\alpha)\;\; \forall t>0.$$
Because the operators $Z$ and $\exp(-t(I-Z^2))$ commute and belong to the class $C_\sH(\alpha)$, the operator $Z(t)$ belongs to the same class $C_\sH(\alpha)$ (see Theorem \ref{cnfhzntj}).

The operator $I-Z(t)^2$ can be represented as follows
\[
I-Z(t)^2=I-Z^2\exp(-2t(I-Z^2))=(I-Z^2)\left(I-Z^2\sum\limits_{n=1}^\infty \cfrac{(-2t)^n}{n!}(I-Z^2)^{n-1}\right).
\]
Hence $\ran(I-Z^2)\ne \sH$ implies $\ran(I-Z(t)^2)\ne \sH$.
\end{proof}
\begin{remark}\label{norm}
Because the operator $Z$ is bounded, the one-parameter group
$$\exp(-t(I-Z^2)0), \; t\in\dR$$
 is continuous on $\dR$ w.r.t. the operator-norm topology (see \cite[Proposition 3.5]{EngNag}). Hence, the same is true for the operator-valued function $Z(t)$ defined in \eqref{aeyrwzz}.
\end{remark}
%%%%%%%%%%%%%%%%%%%%%%%%%%%%%%%%%%%%%%%%%%%%%%%%%%%%%%%%%%%%%%%%

\section{New representations of maximal accretive operators and of their square roots }%%%dThe Kato square root problem and the class $\wt C_\sH$}
\label{rjhtymrd}
In the following statements we characterize bounded sectorial operators whose squares are accretive and sectorial operators.
\begin{proposition} \label{pfuun}
Let $\sH$ be a Hilbert space, let $Q$ be a bounded selfadjoint nonnegative operator in $\sH$, $\ker Q=\{0\}$. Suppose that
\begin{equation}\label{quasherm1}
Z \quad\mbox{is a contraction and}\quad QZ=-Z^*Q.
\end{equation}
Then
\begin{enumerate}
\item the operator
\[
T:=Q(I+Z)=(I-Z^*)Q
\]
is $\cfrac{\pi}{4}$--sectorial, $\ker T=\{0\}$ and
$$T^*=Q(I-Z)=(I+Z^*)Q; $$
\item hold the equalities
\begin{equation}\label{ghjgecn}
\ran T=Q\ran (I+Z),\; \ran T^*=Q\ran (I-Z),
\end{equation}
\item the following are equivalent
\begin{enumerate}
\def\labelenumi{\rm (\roman{enumi})}
\item [{\rm (i)}]$\ran T=\ran T^*$,
\item [{\rm (ii)}] $\ran \RE(T)\subseteq\ran T\cap\ran T^*,$
 \item [{\rm (iii)}] $\ran (I+Z)=\ran (I-Z)=\sH$;
\end{enumerate}
\item
the operator $T^2$ takes the form
$$T^2=Q(I+Z)(I-Z^*)Q=(I-Z^*)Q^2(I+Z)=Q\left((I-ZZ^*)+(Z-Z^*)\right)Q$$
and is accretive;
\item the operator $T^2$ is $\alpha$-sectorial if and only if $Z\in C_\sH(\alpha)$;
\item if $\ran Z\cap\ran Q=\{0\}$, $\ker Z=\{0\},$ then
$$\ran T^2\cap \ran T^{*2}=\{0\}.$$
\end{enumerate}
\end{proposition}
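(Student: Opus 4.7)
My plan hinges on the commutation $QZ = -Z^*Q$ (equivalently $Z^*Q = -QZ$) together with the injectivity and selfadjointness of $Q \geq 0$. For parts (1) and (2), the second identity $Q(I+Z) = (I-Z^*)Q$ is immediate from the commutation, so $T + T^* = 2Q$ and $T - T^* = 2QZ$. A short calculation using $Z^*Q = -QZ$ gives $\RE(QZu, u) = 0$, hence $|\IM(Tu, u)| = |(QZu, u)| \leq \|Q^{1/2}u\|\|Q^{1/2}Zu\|$ by Cauchy--Schwarz. The $\pi/4$-sectoriality then reduces to the pointwise estimate $\|Q^{1/2}Zu\| \leq \|Q^{1/2}u\|$. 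To prove this, $Z^*Q = -QZ$ applied once more yields $\|Q^{1/2}Zv\|^2 = -(Qv, Z^2v) \leq \|Q^{1/2}v\|\|Q^{1/2}Z^2v\|$. Writing $a_n := \|Q^{1/2}Z^n u\|$, this is log-convexity $a_{n+1}^2 \leq a_n a_{n+2}$, and combined with the uniform bound $a_n \leq \|Q^{1/2}\|\|u\|$ (from $\|Z\| \leq 1$), it forces the ratios $a_{n+1}/a_n$ to be $\leq 1$, giving $a_1 \leq a_0$. Injectivity of $T$ is immediate from $\RE T = Q$ being injective, and part (2) is a tautology.

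Parts (3)--(5) follow quickly. Injectivity of $Q$ reduces $\ran T = \ran T^*$ to $\ran(I+Z) = \ran(I-Z)$, after which Lemma~\ref{gjcktte}(1)(b) together with $\ker(I \pm Z) = \{0\}$ (inherited from $\ker T = \ker T^* = \{0\}$) delivers (3); the equivalence (ii) is analogous. For (4) and (5), expanding $T^2 = (I-Z^*)Q^2(I+Z) = Q(I-ZZ^*)Q + Q(Z-Z^*)Q$ yields $\RE(T^2u,u) = \|Qu\|^2 - \|Z^*Qu\|^2 \geq 0$ and $\IM(T^2u,u) = 2\IM(ZQu, Qu)$; the $\alpha$-sectoriality condition then becomes $2|\IM(Zv,v)| \leq \tan\alpha(\|v\|^2 - \|Z^*v\|^2)$ for $v$ in the dense subspace $\ran Q$, hence by continuity for all $v$, which is exactly Proposition~\ref{cdjqcndf}(iii).

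For part (6), given $f \in \ran T^2 \cap \ran T^{*2}$, write $f = (I-Z^*)A = (I+Z^*)B$ with $A = Q^2(I+Z)u$ and $B = Q^2(I-Z)v$. Applying $(I+Z^*)$ and $(I-Z^*)$ respectively and adding yields $2f = (I - Z^{*2})(A + B) = Q(I - Z^2)Qw$, where $w = (I+Z)u + (I-Z)v$ and one uses the identity $Z^{*2}Q = QZ^2$ (two applications of $Z^*Q = -QZ$). Comparing with $f = T^2 u = Q(I+Z)Q(I+Z)u$ and symmetrically with $f = T^{*2}v$, and cancelling the injective factors $Q$ and $I \pm Z$, I obtain $2Tu = (I-Z)Qw$ and $2T^*v = (I+Z)Qw$. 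Adding and subtracting these two equations and eliminating $Qw$ produces the key identity $(I+Z)Tu = (I-Z)T^*v$, i.e.\ $(I+Z)Q(I+Z)u = (I-Z)Q(I-Z)v$. With $p = (I+Z)u$, $q = (I-Z)v$ this rearranges to $Q(p-q) = -ZQ(p+q)$: the LHS lies in $\ran Q$ and the RHS in $\ran Z$, so the hypothesis $\ran Z \cap \ran Q = \{0\}$ forces both to vanish. Injectivity of $Q$ gives $p = q$, and $\ker Z = \{0\}$ together with $ZQ(p+q) = 0$ gives $p + q = 0$. Thus $p = q = 0$, and $\ker(I \pm Z) = \{0\}$ yields $u = v = 0$, so $f = 0$.

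The main obstacle is the log-convexity estimate in (1): $Z$ being a contraction does not immediately give contractivity in the $Q^{1/2}$-seminorm (since $Q^{1/2}$ and $Z$ do not commute), and the sandwich "log-convex plus uniformly bounded implies ratios $\leq 1$" is what saves the argument. A secondary delicate point is the chain of cancellations in (6) that isolates the single identity $(I+Z)Q(I+Z)u = (I-Z)Q(I-Z)v$: only after reaching this precise form do both hypotheses $\ran Z \cap \ran Q = \{0\}$ and $\ker Z = \{0\}$ simultaneously come into play.
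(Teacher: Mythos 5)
Your proof is correct, and on the one genuinely delicate point it takes a different route from the paper. For the $\cfrac{\pi}{4}$-sectoriality in (1), the paper observes that $QZQ^{-1}\uphar\ran Q=-Z^*\uphar\ran Q$ is a contraction and then invokes Kato's generalization of the Heinz inequality \cite{Ka} to conclude that $X=-iQ^{\half}ZQ^{-\half}\uphar\ran Q^{\half}$ is a (essentially selfadjoint) contraction, writing $T=Q+iQ^{\half}XQ^{\half}$; you instead prove the needed estimate $|(QZu,u)|\le(Qu,u)$ directly by the iteration $a_{n+1}^2\le a_n a_{n+2}$ for $a_n=\|Q^{\half}Z^n u\|$ together with the uniform bound $a_n\le\|Q^{\half}\|\,\|u\|$ — essentially a self-contained proof of Reid's inequality, which avoids the external interpolation-type result at the cost of a short combinatorial argument (you should note the trivial degenerate case: if some $a_n=0$ the ratio argument needs the remark that $a_1=0$ settles the claim, while $a_1>0$ forces all $a_n>0$ since $\ker Q=\{0\}$). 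Parts (2)--(5) follow the paper's lines (for (5) your displayed inequality with $\|Z^*v\|$ is item (iv), not (iii), of Proposition \ref{cdjqcndf}, but these are stated as equivalent, and in (3) the equivalence with (ii) that you call ``analogous'' is indeed the one-line observation $\ran T,\ran T^*\subseteq\ran Q$ used in the paper). In (6) your algebra is a mild variant: the paper sets $\psi=f+g$, $\varphi=g-f$ at the level of $T^2f=T^{*2}g$ and reduces to $ZQ(\psi-Z\varphi)=Q(\varphi-Z\psi)$, whereas you cancel the injective factors $Q(I\pm Z)$ to reach $Q(p-q)=-ZQ(p+q)$ with $p=(I+Z)u$, $q=(I-Z)v$; both arguments exploit $\ran Z\cap\ran Q=\{0\}$, $\ker Z=\{0\}$ and $\ker(I\pm Z)=\{0\}$ in the same way, so this is a reparametrization rather than a new idea.
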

\begin{proof}
The equality in \eqref{quasherm1} implies that the operator $i QZ$ is selfadjoint. Therefore for the operator
$$T:=Q(I+Z)=(I-Z^*)Q$$
we have
\[
\RE (T)=Q,\; \IM (T)=-iQZ=iZ^*Q.
\]
Hence, the bounded operator $T$ is accretive. Therefore, $\ker T=\ker T^*\subseteq\ker \RE(T)$. Because $\RE(T)=Q$ and $\ker Q=\{0\}$ we get $\ker T=\ker T^*=\{0\}$. Since $T^*=Q(I-Z)=(I+Z^*)Q$, we get that $\ker (I+Z)=\ker (I-Z)=\{0\}$ and equalities \eqref{ghjgecn} hold. Since
\[
\ran T=Q\ran(I+Z),\; \ran T^*=Q\ran(I-Z),
\]
the equality $\ran T=\ran T^*$ is equivalent to the equality $\ran(I+Z)=\ran (I-Z)$. By Lemma \ref{gjcktte} the latter is equivalent to the equalities $\ran (I+Z)=\ran (I-Z)=\sH$.
Because of the inclusions $\ran Q= \ran \RE(T)\supseteq\ran T,\ran T^*$, the condition $\ran \RE(T)\subseteq\ran T\cap\ran T^*,$ is equivalent to
the equalities $\ran T=\ran T^*=\ran \RE(T)$.

From \eqref{quasherm1} it follows that the operator $Q ZQ^{-1}\uphar\ran Q$ is a contraction in $\sH$, then due to \cite[Theorem 1]{Ka} we get that the operator
$$X:=-iQ^\half ZQ^{-\half} \uphar\ran Q^\half$$
 is contraction.

The equalities
\[
(XQ^\half g,Q^\half h)=-i(QZg,h)=i(Z^*Qg,h)=i (Qg, Zh)=i(Q^\half g,Q^\half Zh)=(Q^\half g,XQ^\half h).
\]
imply that $X$ is essentially selfadjoint.

Due to the equality
$$iQ^\half XQ^\half=Q Z$$
we obtain
\[
\begin{array}{l}
T=Q+QZ=Q-Z^*Q=Q+iQ^\half XQ^\half,\\
T^*=Q-QZ=Q+Z^*Q=Q-iQ^\half XQ^\half,
\end{array}
\]
and
\[
\left|\IM(Tf,f)\right|=\left|(XQ^\half f,Q^\half f)\right|\le ||Q^\half f||^2=\RE (Tf,f),\; f\in\sH.
\]
It follows that the operator $T$ is $\cfrac{\pi}{4}$-sectorial.

For the square $T^2$ one obtains
\[
T^2=Q(I+Z)(I-Z^*)Q,\; \RE(T^2)=Q(I-ZZ^*)Q,\;i\IM(T^2)=Q(Z-Z^*)Q.
\]
Since $Z$ is a contraction, the operator $T^2$ is accretive. Clearly $T^2$ is $\alpha$-sectorial if and only if the operator $(I+Z)(I-Z^*)$ is $\alpha$-sectorial and by   Proposition \ref{cdjqcndf} the latter is equivalent to $Z\in C_\sH(\alpha)$.

Suppose that $\ran Z\cap\ran Q=\{0\}$ and $\ker Z=\{0\}.$

If $T^2f=T^{*2}g$ for some $f,g\in\sH$, then
\[
Q(I-ZZ^*+Z-Z^*)Qf=Q(I-ZZ^*-Z+Z^*)Qg.
\]
Since $\ker Q=\{0\}$, we get the equality
\[
(Z-Z^*)Q(f+g)=(I-ZZ^*)Q(g-f).
\]
Set $\psi:=f+g$, $\f:=g-f$. Then, using the equality $QZ=-Z^*Q$, we have
\[
\begin{array}{l}
(Z-Z^*)Q\psi=(I-ZZ^*)Q\f\Longleftrightarrow ZQ\psi+ZZ^*Q\f=Q\f+Z^*Q\psi\\%=Q(\f-W_0\psi).
 \Longleftrightarrow ZQ\psi-ZQZ\f=Q\f-QZ\psi\Longleftrightarrow ZQ(\psi-Z\f)=Q(\f-Z\psi).
 \end{array}
\]
Taking into account that $\ran Z\cap\ran Q=\{0\}$, we obtain $\f=Z\psi$ and therefore
\[
ZQ(\psi-Z\f)=0\Longleftrightarrow ZQ(I-Z^2)\psi=0%%\Longleftrightarrow Z(I-Z^{*2})Q\psi=0.
\]
Since $\ker Z=\{0\}$ and $\ker (I-Z^{2})=\{0\}$, we arrive to the equality $\psi=0$. Hence $\f=0$. Therefore $f=g=0$, i.e., $\ran T^2\cap\ran T^{*2}=\{0\}$.
\end{proof}

\begin{remark}\label{normrem}
(1) The following statement follows from Proposition \ref{quasherm1}: \textit{if $Q$ is a bounded nonnegative selfadjoint operator, $\ker Q=\{0\}$, then there is no a nonzero bounded selfadjoint operator $Z$ which anti-commutes with $Q$ ($QZ+ZQ=0$)}.

 Actually, if $Z$ is a such selfadjoint operator, then $\wh Z=||Z||^{-1}Z$ is a selfadjoint contraction  and $Q\wh Z=-\wh ZQ$. The operator $\wh T=Q+Q\wh Z$ is accretive and $\wh T^2=Q(I-\wh Z^2)Q\ge 0,$ i.e. the operator $\wh T^2$ is nonnegative selfadjoint. The uniqueness of maximal accretive square root of a maximal accretive operator (see \cite[Chapter V, Theorem 3.35]{Kato}) yields that $\IM (\wh T)=0$, i.e., $QZ=0$ and hence $Z=0.$

 On the other side the above statement can be derived from the well known Heinz inequality (see e.g. \cite{FFFN}): if $Q_1$ and $Q_2$ are bounded nonnegative selfadjoint operators, then for each bounded operator $Z$ and for an arbitrary $r\in[0,1]$ holds
 \[
 \left\|Q_1Z+ZQ_2\right\|\ge\left\| Q_1^rZQ_2^{1-r}+ Q_1^{1-r}ZQ_2^{r}\right\|.
 \]
In fact, if $Q_1=Q_2=Q$ and if $r=\half,$ then
$
\left\|QZ+ZQ\right\|\ge 2\left\| Q^\half ZQ^\half\right\|.
$
Hence, $\ker Q=\{0\}$ and the equality $QZ+ZQ=0$ yield $Z=0$.

(2) Suppose that the operator $Z$ is skew-selfadjoint, i.e., $Z=iY$, where $Y$ is a sefadjoint contraction. Then \eqref{quasherm1} means that $Y$ commutes with $Q$ and the operator
$T=Q(I+iY)$ is a normal, $\cfrac{\pi}{4}$--sectorial, $T^*=Q(I-iY)$, and $\ran T=\ran T^*=\ran Q$. The operator $T^2=Q(I+iY)^2Q$ is normal and accretive. It is $\alpha$-sectorial if and only if $iY\in C_\sH(\alpha)$. Since $Y$ is a selfadjoint contraction, the latter means that $||Y||\le \tan\frac{\alpha}{2}$.

\end{remark}

Observe, that condition \eqref{quasherm1} implies the equalities
\[
\begin{array}{l}
QZ^{2n}=Z^{*2n}Q\quad\mbox{and}\quad Q(iZ^{2n})=-(iZ^{2n})^*Q,\\[2mm]
QZ^{2n-1}=-(Z^{2n-1})^*Q,\\[2mm]
TZ^{2n}=Z^{*2n}T,\; TZ^{2n-1}=-Z^{*(2n-1)}T\;\;\forall n\in\dN.
\end{array}
\]
Moreover, since the contractive function $Z(t)$, defined in \eqref{aeyrwzz}, admits the representation
\[
Z(t)=Z\exp(-t(I-Z^2))=\exp(-t)\sum\limits_{n=0}^\infty\cfrac{t^n}{n!}Z^{2n+1},\; t\in\dR_+,
\]
one obtains the equalities
$$QZ(t)=-Z(t)^*Q,\; TZ(t)=-Z(t)^*T\;\;\forall t\in\dR_+.$$
Thus, the operators $iQZ^{2n-1}$ and $iQZ(t)$ are selfadjoint for all $n\in\dN$ and all $t\in\dR_+ $.
It follows that the operators
$$T_n=Q(I+Z^{2n-1}),\; T(t)=Q(I+Z(t))$$
are accretive and $\RE (T_n)=\RE (T_t)=Q$ $\forall n\in\dN$, $\forall  t\in\dR_+$.
In addition, the operator-valued function $T(t)$ is continuous on $\dR_+$ w.r.t. the operator-norm topology (see Remark \ref{norm}).

Taking into account Proposition \ref{rcgj} we arrive to the following corollary.
\begin{corollary}\label{cntgtyb}
Let \eqref{quasherm1} be satisfied. Then

(1) assertions of Proposition \ref{pfuun} hold true for the operators
\[
\begin{array}{l}
T_n=Q(I+Z^{2n-1}),\;T^*_n=Q(I-Z^{2n-1}),\;
 T_n^2=Q(I+Z^{2n-1})(I-Z^{*(2n-1)})Q,\\[2mm]
\wt T_n=Q(I+iZ^{2n})=(I+iZ^{*2n})Q,\; \wt T^2_n=Q(I+iZ^{2n})(I+iZ^{*2n})Q,\; n\in\dN \\[2mm]
 T(t)=Q(I+Z(t))=(I-Z(t)^*)Q,\; T(t)^*=Q(I-Z(t))=(I+Z(t)^*)Q,\\[2mm]
\qquad T(t)^2=Q (I+Z(t))(I-Z(t)^*)Q,\; t\in\dR_+.
\end{array}
\]
Besides
$$\RE (T_n)=\RE (\wt T_n)=\RE (T(t))=Q\;\;\forall n\in\dN,\;\; \forall t\in\dR_+$$
and the operator-valued functions $T(t)$ and $T(t)^2$ are continuous w.r.t. the operator-norm topology.

 (2) The operators $\wh T_n:=Q(I+Z^{2n})=(I+Z^{*2n})Q$, $n\in\dN, $ are selfadjoint.
\end{corollary}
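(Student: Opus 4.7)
The plan is to reduce the corollary to Proposition \ref{pfuun} by applying that proposition three times, with $Z$ replaced in turn by $W_1 := Z^{2n-1}$, $W_2 := iZ^{2n}$, and $W_3 := Z(t)$. To do this I would first check, for each $W_j$, the two hypotheses of Proposition \ref{pfuun}: that $W_j$ is a contraction, and that $QW_j = -W_j^*Q$. Contractivity is immediate for $W_1$ (a power of a contraction) and $W_2$ (multiplication by a unimodular scalar), and it is Proposition \ref{rcgj} for $W_3$. The anticommutation relations are exactly the parity identities recorded in the paragraph before the corollary; each is obtained from $QZ = -Z^*Q$ by iteration (so that odd powers of $Z$ anticommute with $Q$ and even powers commute), and for $W_3$ it falls out of the norm-convergent expansion $Z(t) = e^{-t}\sum_{k\ge 0}\frac{t^k}{k!}Z^{2k+1}$ as a series of odd powers. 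Note in particular that $Q(iZ^{2n}) = iZ^{*2n}Q = -(iZ^{2n})^*Q$, so the extra factor of $i$ converts the commutation $QZ^{2n} = Z^{*2n}Q$ into the required anticommutation for $W_2$.

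With both hypotheses verified, assertions (1)--(6) of Proposition \ref{pfuun} deliver at once the $\frac{\pi}{4}$-sectoriality, the kernel and range properties, the formulae for the adjoints, and the factorizations of the squares listed for $T_n$, $\wt T_n$, and $T(t)$. The identities $\RE(T_n) = \RE(\wt T_n) = \RE(T(t)) = Q$ then come from summing each operator with its adjoint; for instance, $T_n + T_n^* = Q(I + Z^{2n-1}) + Q(I - Z^{2n-1}) = 2Q$, and the analogous cancellation applies to $\wt T_n$ and $T(t)$. For the norm-continuity of $t \mapsto T(t)$ and $t \mapsto T(t)^2$ I would cite Remark \ref{norm}, which furnishes norm-continuity of $t \mapsto Z(t)$, and observe that bounded multiplication and adjunction are continuous in the operator-norm topology, so continuity propagates through the defining formulae $T(t) = Q(I + Z(t))$ and $T(t)^2 = Q(I + Z(t))(I - Z(t)^*)Q$.

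Part (2) is a direct computation: using $QZ^{2n} = Z^{*2n}Q$ one rewrites $\wh T_n = Q + QZ^{2n} = (I + Z^{*2n})Q$, whose Hilbert space adjoint is $Q(I + Z^{2n}) = \wh T_n$. I do not foresee any real obstacle. The corollary is essentially a bookkeeping exercise, and the only point requiring a little attention is to track which substitution produces which structure: $W_1$ and $W_3$ are contractions anticommuting with $Q$ and so feed Proposition \ref{pfuun} verbatim, $W_2$ requires the single algebraic twist with the scalar $i$ described above, and the even-power operators $\wh T_n$ fall outside the scope of Proposition \ref{pfuun} precisely because $Z^{2n}$ commutes (rather than anticommutes) with $Q$ through adjunction, yielding selfadjointness instead of sectoriality.
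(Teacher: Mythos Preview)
Your proposal is correct and follows essentially the same route as the paper: the paper's ``proof'' is the paragraph of observations immediately preceding the corollary, which derives the parity identities $QZ^{2n-1}=-Z^{*(2n-1)}Q$, $Q(iZ^{2n})=-(iZ^{2n})^*Q$, and $QZ(t)=-Z(t)^*Q$ (the last via the norm-convergent odd-power expansion of $Z(t)$), and then invokes Proposition~\ref{rcgj} for the contractivity of $Z(t)$; your write-up makes the same substitutions into Proposition~\ref{pfuun} and cites the same auxiliary results (Proposition~\ref{rcgj}, Remark~\ref{norm}) in the same places.
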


\begin{proposition} \label{quasherm2}
Let $T$ be a bounded accretive operator in $\sH$, $\ker T=\{0\}$. Suppose that $T^2$ is also accretive. Then

(1) $T$ is $\cfrac{\pi}{4}$-sectorial; % moreover,

(2) the operator
\begin{equation}\label{ukfdysq}
Z:=i(\RE (T))^{-1}\IM (T),
\end{equation}
is a contraction and holds the relation
%%%%
\[
(\RE (T))Z=-Z^*(\RE (T)).
\]
(3) the operators $T$, $T^*$ and $T^2$ admit the representations
\begin{equation}\label{ttt2}
\begin{array}{l}
 T=(\RE (T))(I+Z)=(I-Z^*)(\RE (T)),\\[2mm]
  T^*=(I+Z^*)(\RE (T))=(\RE (T))(I-Z),\\[2mm]
 T^2=(\RE (T))(I+Z)(I-Z^*)(\RE( T))=(I-Z^*)(\RE(T))^2(I+Z);
\end{array}
\end{equation}
(4) %%%
the operator
\[%%
\left\{\begin{array}{l}
\RE (T^{-1})=\cfrac{1}{2}\left(T^{-1}+T^{*-1}\right),\\[3mm]
 \dom \RE (T^{-1})=\dom T^{-1}\cap\dom T^{*-1}=(\RE (T))\ran (I-Z^2)
\end{array}\right.
\]%%
is selfadjoint; %%%% 

(5) the operators $T^{-1}T^*$ and $T^{*-1}T$ are $m$-accretive and hold the equalities
\begin{equation}\label{gomres}
\begin{array}{l}
T^{-1}T^*=(I-Z)(I+Z)^{-1},\;\dom (T^{-1}T^*)=\ran (I+Z)=T^{*-1}(\ran T\cap\ran T^*),\\[2mm]
T^{*-1}T=(I+Z)(I-Z)^{-1},\;\dom (T^{-1}T^*)=\ran (I-Z)=T^{-1}(\ran T\cap\ran T^*);
\end{array}
\end{equation}

(6) %%%
the following are equivalent:
\begin{enumerate}
\def\labelenumi{\rm (\roman{enumi})}
\item [{\rm (i)}] the operator $T^2$ is $\alpha$-sectorial,
\item [{\rm (ii)}]the operator $T^{-1}T^*$ is $m-\alpha$-sectorial,
\item [{\rm (iii)}] the operator $Z$ belongs to the class $C_\sH(\alpha)$.
\end{enumerate}
%%%
\end{proposition}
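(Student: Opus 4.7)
The plan is to reduce everything to structural facts about the selfadjoint pair $A:=\RE(T)$, $B:=\IM(T)$ combined with Douglas's lemma. Since $T=A+iB$ is bounded and accretive, $A\ge 0$ and $B=B^*$; accretivity of $T^2=A^2-B^2+i(AB+BA)$ translates to $B^2\le A^2$. From $(Tf,f)=(Af,f)+i(Bf,f)$ together with $A\ge 0$ one gets $\ker T=\ker A\cap\ker B$, and $\|Bf\|\le\|Af\|$ gives $\ker A\subseteq\ker B$, so $\ker A=\{0\}$. For (1), Loewner--Heinz monotonicity of $t\mapsto t^{1/2}$ applied to $B^2\le A^2$ yields $|B|\le A$; since $|(Bf,f)|\le(|B|f,f)$ holds for every selfadjoint $B$ (spectral theorem), one concludes $|\IM(Tf,f)|\le\RE(Tf,f)$, i.e.\ $\pi/4$-sectoriality. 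For (2), Douglas's lemma produces a contraction $C$ with $B=AC$; selfadjointness of $B$ then forces $C^*A=B$ and hence $AC=C^*A$. Set $Z:=iC$: $Z$ is a contraction, and since $Bf=ACf\in\ran A$ for every $f$ and $A$ is injective, $A^{-1}Bf=Cf$, so $Z=i(\RE T)^{-1}\IM(T)$ interpreted as a bounded operator on $\sH$. The anticommutation follows from $AZ=iAC=iB=-(-iC^*)A=-Z^*A$.

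For (3), substituting $iB=AZ=-Z^*A$ into $T=A+iB$ and $T^*=A-iB$ gives $T=A(I+Z)=(I-Z^*)A$ and $T^*=(I+Z^*)A=A(I-Z)$; the two factorizations of $T^2$ follow by direct multiplication. These identities reproduce exactly the setup of Proposition \ref{pfuun} with $Q=A$, so its conclusions are available; in particular $\ran T=A\ran(I+Z)$ and $\ran T^*=A\ran(I-Z)$. Because $\ker T=\ker T^*=\{0\}$ (every $m$-accretive operator has $\ker T=\ker T^*$) and $A$ is injective, $\ker(I\pm Z)=\{0\}$; Lemma \ref{gjcktte}(1) then gives $\ran(I+Z)\cap\ran(I-Z)=\ran(I-Z^2)$, whence $\dom\RE(T^{-1})=\ran T\cap\ran T^*=A\ran(I-Z^2)$ by injectivity of $A$. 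For $f=A(I-Z^2)g$ in this domain, the factorizations $I-Z^2=(I\pm Z)(I\mp Z)$ yield $T^{-1}f=(I-Z)g$ and $T^{*-1}f=(I+Z)g$; hence $\RE(T^{-1})f=g$ and therefore $\RE(T^{-1})=\bigl(A(I-Z^2)\bigr)^{-1}$. Iterating $AZ=-Z^*A$ gives $AZ^2=Z^{*2}A=(AZ^2)^*$, so $A(I-Z^2)$ is bounded selfadjoint and injective; its inverse is selfadjoint, proving (4).

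For (5), compose: $T^{-1}T^*=(I+Z)^{-1}A^{-1}\cdot A(I-Z)=(I+Z)^{-1}(I-Z)=(I-Z)(I+Z)^{-1}$, where the last equality uses that $Z$ commutes with $(I+Z)^{-1}$ on $\ran(I+Z)$. A routine check (writing $(I-Z)f=(I+Z)h$ and solving for $f$) identifies the natural domain as $\ran(I+Z)$, which coincides with $T^{*-1}(\ran T\cap\ran T^*)$ by the previous paragraph. The right-hand side is precisely the operator $-I+2(I+Z)^{-1}$ attached to the contraction $Z$ via \eqref{rhtqy}; since $\ker(I+Z)=\{0\}$ it is single-valued and $m$-accretive. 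The statement for $T^{*-1}T$ is symmetric. For (6), the equivalence (i)$\Leftrightarrow$(iii) is Proposition \ref{pfuun}(5) with $Q=A$, while (ii)$\Leftrightarrow$(iii) is Proposition \ref{cdjqcndf}(vi) applied to the $m$-accretive operator $T^{-1}T^*=(I-Z)(I+Z)^{-1}$ associated with $Z$.

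The main delicate step is the implication $B^2\le A^2\Rightarrow|B|\le A$, which is not obtainable by Cauchy--Schwarz and genuinely requires operator monotonicity of the square root; everything else is careful bookkeeping, systematically exploiting the single identity $AZ=-Z^*A$ (which makes $AZ^{2n-1}$ skew-selfadjoint and $AZ^{2n}$ selfadjoint, in particular $A(I-Z^2)$ selfadjoint) together with Lemma \ref{gjcktte} and the already established Proposition \ref{pfuun}.
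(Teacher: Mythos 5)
Your proposal is correct and follows essentially the same route as the paper: Cartesian decomposition of $T$, the inequality $\|\IM(T)f\|\le\|\RE(T)f\|$ extracted from accretivity of $T^2$, the Douglas-type factorization of $\IM(T)$ through $\RE(T)$ producing the contraction $Z$ with $(\RE(T))Z=-Z^*(\RE(T))$, the factorizations \eqref{ttt2}, Lemma \ref{gjcktte} for $\ran T\cap\ran T^*=(\RE(T))\ran(I-Z^2)$, and the linear fractional transformation \eqref{rhtqy} together with Propositions \ref{pfuun} and \ref{cdjqcndf} for (5) and (6). Two local steps differ. For (1) you argue directly: $B^2\le A^2$ plus Loewner--Heinz gives $|B|\le A$, hence $|\IM(Tf,f)|\le(|B|f,f)\le\RE(Tf,f)$; the paper instead routes this through Proposition \ref{pfuun}, whose proof conjugates $Z$ by $Q^{\half}$ and invokes Kato's Heinz-type theorem to get an essentially selfadjoint contraction $X$ --- both arguments rest on a Heinz/Loewner inequality, yours being somewhat more direct for this particular claim. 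For (4) you identify $\RE(T^{-1})$ as the inverse of the bounded, injective, nonnegative selfadjoint operator $A(I-Z^2)=A+Z^*AZ$ and conclude selfadjointness from that, whereas the paper shows $\RE(T^{-1})(\RE(T))(I-Z^2)g=g$ and verifies $\ran(\RE(T^{-1})+I)=\sH$ via the boundedly invertible operator $I+\RE(T)+Z^*(\RE(T))Z$; this is the same computation packaged slightly more cleanly on your side. All the delicate domain identifications (that $(I-Z)f\in\ran(I+Z)$ forces $f\in\ran(I+Z)$, and that $T^{*-1}(\ran T\cap\ran T^*)=\ran(I+Z)$) are handled correctly, matching the paper's identity \eqref{cjvytd}.
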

\begin{proof}
Let
\[
T=\RE (T)+i\IM (T),\; T^*=\RE (T)-i\IM (T),
\]
be Cartesian decompositions of the operators $T$ and $T^*$. Then the Cartesian decomposition of the operator $T^2=(\RE (T)+i\IM (T))^2$ takes the form
\[
T^2=(\RE( T))^2-(\IM (T))^2+i\left((\RE( T))(\IM (T)+ (\IM (T)(\RE( T))\right).
\]
Since $T^2$ is accretive, we get
\[
\RE(T^2f,f)=\|\RE( T)f\|^2-\|\IM (T)f\|^2\ge 0\;\forall f\in\sH,
\]
i.e., $\|\IM (T)f\|\le  \|\RE( T)f\|$ for all $f\in \sH$.
It follows that if $\RE( T)f=0$, then $\IM(T)f=0$ and, consequently, $Tf=0$. Thus, the condition $\ker T={0}$ and accretiveness of $T^2$ imply $\ker \RE(T)=\{0\}$.
Moreover, because $\cran \RE(T)= \sH$, there exists a contraction $V$ in $\sH$ such that
\[
\IM (T)=V(\RE( T))=(\RE( T))V^*.
\]
Hence, the operator $Z$ given by \eqref{ukfdysq} is a contraction, coincides with the operator i$V^*$, and
$(\RE (T))Z=-Z^*(\RE (T))=i\IM (T).$ It follows that
equalities \eqref{ttt2} are valid and $\ker(I\pm Z)=\{0\}$. Besides,  Proposition  \ref{pfuun} yields that $T$ is a $\cfrac{\pi}{4}$-sectorial.

From expressions for $T$ and $T^*$ and Lemma \ref{gjcktte} we get
\[
\ran T\cap \ran T^*=(\RE (T))\left(\ran (I+Z)\cap\ran(I-Z)\right)=(\RE (T))\ran (I-Z^2).
\]
The operator $T^{-1}$ is $m-\cfrac{\pi}{4}$-sectorial operator and from \eqref{ttt2}
\[
T^{-1}=(I+Z)^{-1}(\RE (T))^{-1},\; T^{*-1}=(I-Z)^{-1}(\RE (T))^{-1}.
\]
\[
\dom T^{-1}\cap\dom T^{*-1}=(\RE( T))\ran (I-Z^2).
\]
Let $f=(\RE T)(I-Z^2)g$, $g\in \sH.$ Then
\begin{multline*}
\RE (T^{-1})(\RE (T))(I-Z^2)g=\cfrac{1}{2}\left(T^{-1}+T^{*-1}\right)f\\
=\cfrac{1}{2}\left((I+Z)^{-1}(\RE (T))^{-1}+(I-Z)^{-1}(\RE (T))^{-1}\right)(\RE (T))(I-Z^2)g\\
=\cfrac{1}{2}\left((I-Z)g+(I+Z)g\right)=g,
\end{multline*}
\[
\left(\RE (T^{-1})+I\right)(\RE (T))(I-Z^2)g=\left(I+\RE (T)+Z^*(\RE (T))Z\right)g.
\]
Since the $\RE T$ and $Z^*(\RE T)Z$ are bounded nonnegative selfadjoint operators, we get that $\ran\left(I+\RE (T)+Z^*(\RE (T))Z\right)=\sH$. Hence
\[
\ran \left(\RE (T^{-1})+I\right)=\sH.
\]
Therefore, the operator $\RE (T^{-1})$ is selfadjoint.

From \eqref{ttt2} it follows the equality
\begin{equation}\label{cjvytd}
T^*(I+Z)h=T(I-Z)h=\RE(T)(I-Z^2)h\;\; \forall h\in\sH.
\end{equation}
This leads to \eqref{gomres}.
 Equivalences in (6) follow from Proposition \ref{cdjqcndf} and equalities \eqref{ttt2}, \eqref{gomres}.
\end{proof}
\begin{remark} \label{realpart}

(1) Set $B:=T^{-2}$. Then $T=B^{-\half}$ and statement (4) (a) means that the operator
$$\RE (B^\half )=\cfrac{1}{2}\left( B^\half +B^{*\half} \right),\; \dom \RE (B^\half )= \dom B^\half \cap\dom B^{*\half} $$
is selfadjoint.
This result was established in \cite[Theorem 5.1]{Kato1961} (see Introduction).
%%% %%
 The proof in \cite{Kato1961} is based on the Yosida approximation $A_n:=B^\half \left(I+n^{-1}B^\half \right)^{-1},$ $n=1,2,...$ of the operator $B^\half $.
Using the same approximation, slightly different proof of this fact was given in \cite[Theorem 3.1, Corollary 3.2] {Okazawa}.

(2) Set $G:=T^{*-1}T=B^{*\half} B^{-\half}.$  From \eqref{cjvytd} %%Proposition \ref{quasherm2}
we get that
$$G=(I+Z)(I-Z)^{-1}=-I+2(I-Z)^{-1}.$$
 Hence
$$G^*+I=2(I-Z^*)^{-1},\; G(I-Z)=I+Z,\; G^*(I-Z^*)=I+Z^*.$$
Then equalities in \eqref{ttt2} yield that
\[
\begin{array}{l}
 G^*(\RE (T))Gf=(\RE (T))f\;\;\forall f\in\dom G,\\[2mm]
T=2(G^*+I)^{-1}\RE (T).
\end{array}
\]
%%%
Hence, if $S=2\RE (T)=B^{-\half}+B^{*-\half}$, then $B^{\half}= S^{-1}(G^*+I)$ and $G^*SGf=Sf$ for all $f\in \dom G=\ran (I-Z)$. These relations have been established (by an another way) in \cite[Theorem 1]{Gomilko} (see \eqref{ujvbkrj}). In addition, the equality $T=\RE(T)(I+Z)$ yields the equality $B^\half=(G^{-1}+I)S^{-1}.$
\end{remark}

The next theorem summarizes statements of Propositions \ref{pfuun}, \ref{quasherm2}, and Remark \ref{normrem} (2).
\begin{theorem}\label{summar}
There is a one-to-one correspondence between

{\rm({\bf a})} pairs of bounded operators $\left\{Q,Z\right\}$ in $\sH$ satisfying conditions
\begin{enumerate}
\item $Q$ is a nonnegative selfadjoint operator in $\sH$, $\ker Q=\{0\}$,
\item $Z$ is a contraction in $\sH$,
\item $QZ=-Z^*Q$,
\end{enumerate}
and

{\rm({\bf b})} bounded sectorial operators $T$ such that $T^2$ is accretive and $\ker T=\{0\}.$

The correspondence is given by the mappings
\begin{multline*}
\left\{\begin{array}{l}\qquad Q,Z\\
QZ=-Z^*Q\end{array}\right\}\mapsto \left\{\begin{array}{l} \RE (T)=Q,\; \IM (T)=-iQZ,\\
T=Q(I+Z)=(I-Z^*)Q,\; T^*=Q(I-Z)=(I+Z^*)Q,\\
 T^2=Q(I+Z)(I-Z^*)Q=(I-Z^*)Q^2(I+Z)\end{array}\right\},\\
 \left\{\begin{array}{l}T=\RE (T)+i\IM (T)\\
 \RE(T^2)\ge 0\end{array}\right\}\mapsto \left\{Q=\RE (T),\;Z=iQ^{-1}\IM (T)\right\}.
\end{multline*}

The operator $T$ is normal if and only if $Z$ is a skew-selfadjoint ($Z^*=-Z$).

The operator $T^2$ is $\alpha$-sectorial if and only if $Z\in C_\sH(\alpha)$.

\end{theorem}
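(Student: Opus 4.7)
The plan is to verify that the two maps displayed in the statement are mutually inverse bijections between the two classes and that they transport the normality and sectoriality conditions correctly. Virtually all of the analytic content is already contained in Propositions \ref{pfuun} and \ref{quasherm2}; the task reduces to assembling them into a single correspondence.

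First I would check that the forward map $\{Q,Z\}\mapsto T:=Q(I+Z)$ takes the source class into the target class. By Proposition \ref{pfuun}, under conditions (1)--(3) the operator $T$ is bounded $\tfrac{\pi}{4}$-sectorial with $\ker T=\{0\}$ and $T^{2}$ accretive, and one reads off $\RE T=Q$, $\IM T=-iQZ$. The formulas for $T^{*}$ and $T^{2}$ recorded in the statement are precisely those of that proposition.

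Second I would check the backward map $T\mapsto\{\RE T,\;i(\RE T)^{-1}\IM T\}$. Proposition \ref{quasherm2} establishes that when $T$ is bounded sectorial with $\ker T=\{0\}$ and $T^{2}$ accretive, one has $\ker\RE T=\{0\}$ and the operator $Z:=i(\RE T)^{-1}\IM T$ is a bounded contraction (interpreted via Douglas' lemma from $\|\IM T f\|\le\|\RE T f\|$) satisfying $(\RE T)Z=-Z^{*}(\RE T)$. That the two maps are mutually inverse is then formal: starting from $\{Q,Z\}$, one has $iQ^{-1}(-iQZ)=Z$; starting from $T$, one has $Q(I+Z)=Q+QZ=\RE T+i\IM T=T$.

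It remains to transport the structural conditions. The equivalence \emph{$T^{2}$ is $\alpha$-sectorial iff $Z\in C_{\sH}(\alpha)$} is exactly Proposition \ref{pfuun}(5). For normality, the direction $Z^{*}=-Z\Rightarrow T$ normal is covered in Remark \ref{normrem}(2): the defining relation then reduces to $QZ=ZQ$, so $\RE T$ and $\IM T=-iQZ$ commute. Conversely, if $T$ is normal, $Q$ commutes with $QZ$; using $QZ=-Z^{*}Q$ one computes $[Q,QZ]=-[Q,Z^{*}Q]=[Z^{*},Q]Q$, so the commutation condition forces $[Z^{*},Q]Q=0$. Since $\cran Q=\sH$, the operator $Z^{*}$ commutes with $Q$, whence $QZ=-Z^{*}Q=-QZ^{*}$, and injectivity of $Q$ yields $Z+Z^{*}=0$. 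The only step beyond direct citation is this final reversal of the normality equivalence, and it amounts to the short commutator manipulation above; I expect no substantive obstacle.
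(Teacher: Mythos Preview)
Your proposal is correct and follows essentially the same approach as the paper, which presents Theorem \ref{summar} as a summary of Propositions \ref{pfuun}, \ref{quasherm2}, and Remark \ref{normrem}(2) without a separate proof. Your explicit commutator computation for the converse of the normality equivalence is correct and fills in the one detail the paper leaves implicit in those cited results.
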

Further in Theorem \ref{sqdomrav} we give new representations of unbounded  $m$-accretive and $m$-sectorial coercive operators and their square roots. The proof is based on Proposition \ref{pfuun} and Proposition \ref{quasherm2}.
\begin{theorem}\label{sqdomrav}
Let $B$ be an $m$-accretive operator having bounded inverse.
Then
\begin{enumerate}
\item $B$ admits the representations
\begin{equation}\label{novoeghtl}
B=\cL\left((I+Z)(I-Z^*)\right)^{-1}\cL=(I+Z)^{-1}\cL^2(I-Z^*)^{-1},
\end{equation}
where a positive definite selfadjount operator $\cL$ and a contraction $Z$ are given by the relations
\begin{equation} \label{cccont2}
\cL:=(\RE(B^{-\half}))^{-1},
\end{equation}
\begin{equation}\label{cccont}
Z:=i(\RE(B^{-\half}))^{-1}(\IM (B^{-\half})),
\end{equation}
and satisfy the condition
\[%%%
\cL Z^*f=-Z\cL f,\;\;\forall f\in\dom \cL;
\]%%%
\item the square roots $B^\half $, $B^{*\half} $ %has the semi-angle $\le\cfrac{\pi}{4}$ and $B^\half $
admit the representations
\begin{equation}\label{novoeghtl3}
B^\half =(I+Z)^{-1}\cL=\cL(I-Z^*)^{-1},\; B^{*\half} =(I-Z)^{-1}\cL=\cL(I+Z^*)^{-1};
\end{equation}
\item the operator $B^{\half}B^{*-\half}$ is $m$-accretive and takes the form
\begin{equation}\label{xqe1}
B^{\half}B^{*-\half}=(I-Z)(I+Z)^{-1};
\end{equation}
\item the following are equivalent:
\begin{enumerate}
\item [{\rm (i)}] operator $B$ is $m-\alpha$-sectorial;
\item [{\rm (ii)}]the operator $B^{\half}B^{*-\half}$ is $m-\alpha$-sectorial;
 \item [{\rm (iii)}] $Z\in C_\sH(\alpha)$.
\end{enumerate}
\end{enumerate}
\end{theorem}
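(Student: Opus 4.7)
\emph{Proof plan.} The plan is to reduce everything to Proposition \ref{quasherm2} applied to the bounded operator $T := B^{-\half}$. Since $B$ is $m$-accretive with bounded inverse, $B^{-1}$ is a bounded $m$-accretive operator with trivial kernel, so $T = B^{-\half}$ is bounded, accretive, injective, and $T^2 = B^{-1}$ is accretive. Proposition \ref{quasherm2} then gives that $T$ is $\frac{\pi}{4}$-sectorial, $\ker \RE T = \{0\}$, and that $Z := i(\RE T)^{-1}\IM T$ from \eqref{cccont} is a contraction satisfying $(\RE T)Z = -Z^*(\RE T)$, together with the bounded factorizations $T = (\RE T)(I+Z) = (I-Z^*)(\RE T)$.

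Setting $\cL := (\RE T)^{-1}$ as in \eqref{cccont2}, boundedness, nonnegativity, and injectivity of $\RE T$ make $\cL$ a positive definite selfadjoint operator with $\dom\cL = \ran \RE T$ and $\sigma(\cL)\subseteq[\|\RE T\|^{-1},\infty)$. I would establish the asserted commutation $\cL Z^* f = -Z\cL f$ on $\dom\cL$ by writing $f = (\RE T)g$; then $Z^* f = -(\RE T)Zg \in \ran\RE T = \dom\cL$, and $\cL Z^* f = -Zg = -Z\cL f$. Rewriting the factorizations of $T$ as $B^{-\half} = \cL^{-1}(I+Z) = (I-Z^*)\cL^{-1}$ and inverting (legitimate since $\ker(I\pm Z) = \{0\}$, noted in the proof of Proposition \ref{quasherm2}) yields $B^\half = (I+Z)^{-1}\cL = \cL(I-Z^*)^{-1}$. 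Taking adjoints of the factorizations of $B^{-\half}$ gives $B^{*-\half} = (I+Z^*)\cL^{-1} = \cL^{-1}(I-Z)$, and inverting produces $B^{*\half} = \cL(I+Z^*)^{-1} = (I-Z)^{-1}\cL$. Composing these products in both orders recovers the two representations of $B$ in \eqref{novoeghtl}, finishing items (1) and (2).

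For item (3), I would compose $B^\half = (I+Z)^{-1}\cL$ with $B^{*-\half} = \cL^{-1}(I-Z)$; the $\cL$'s cancel, leaving $B^\half B^{*-\half} = (I+Z)^{-1}(I-Z)$. A short calculation using $(I-Z)(I+Z) = (I+Z)(I-Z)$ and $\ker(I\pm Z) = \{0\}$ identifies this with $(I-Z)(I+Z)^{-1} = -I + 2(I+Z)^{-1}$ on the common domain $\ran(I+Z)$, and the latter is $m$-accretive by \eqref{rhtqy}. For item (4), I would chain equivalences: $B$ is $m$-$\alpha$-sectorial iff $B^{-1}$ is $m$-$\alpha$-sectorial iff the bounded operator $T^2 = B^{-1}$ is $\alpha$-sectorial iff $Z\in C_\sH(\alpha)$ by Proposition \ref{quasherm2}(6); and Proposition \ref{cdjqcndf}(vi) identifies $Z\in C_\sH(\alpha)$ with $(I-Z)(I+Z)^{-1}$ being $m$-$\alpha$-sectorial.

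The main technical obstacle is the bookkeeping of domains when the unbounded $\cL$ is sandwiched between the bounded contractions $(I\pm Z)^{\pm 1}$: for each representation I must check that the composed expressions agree as operators (not merely as linear relations or formal identities), which amounts to verifying that the inner action lands in $\dom \cL$ or in $\ran(I\pm Z)$ as appropriate, and that the resulting domain matches the canonical domain of $B^\half$, $B^{*\half}$, $B$, or $B^\half B^{*-\half}$. The commutation $(\RE T)Z = -Z^*(\RE T)$ is the key algebraic input that makes all these domain checks go through.
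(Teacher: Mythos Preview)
Your proposal is correct and follows essentially the same approach as the paper: set $T:=B^{-\half}$, note that $T^2=B^{-1}$ is bounded accretive with trivial kernel, and apply Proposition~\ref{quasherm2} (together with Proposition~\ref{cdjqcndf} for the sectoriality equivalences) to read off all the representations. Your treatment is in fact more explicit than the paper's own proof, which merely states that the result follows from Propositions~\ref{pfuun} and~\ref{quasherm2}; in particular your careful domain bookkeeping for $\cL$ and the identification $(I+Z)^{-1}(I-Z)=(I-Z)(I+Z)^{-1}$ on $\ran(I+Z)$ fills in details the paper leaves implicit.
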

In the next theorem we establish criterions for the equality $\dom B^\half=\dom B^{*\half}$.
\begin{theorem}\label{sqdomrav22}
Let $B$ be an unbounded $m$-accretive operator
having bounded inverse and let the operator $Z$ be given by \eqref{cccont}.
Then
the following are equivalent
\begin{enumerate}
\def\labelenumi{\rm (\roman{enumi})}
\item [{\rm (i)}] $\dom B^\half =\dom B^{*\half}, $
\item [{\rm (ii)}]the operator $B^{1/2}B^{*-1/2}$ is bounded and has bounded inverse
\item [{\rm (iii)}] $\ran (I+ Z)=\ran(I-Z)=\sH,$
\item [{\rm (iv)}]$\ran(I-Z^2)=\sH,$
\item [{\rm (v)}] $  \dom (\RE (B^{-\half}))^{-1}\subseteq\dom B^\half\cap \dom B^{*\half}.$
\end{enumerate}

If $B$ is  $m$-sectorial, then the following are equivalent:
\begin{enumerate}
\def\labelenumi{\rm (\roman{enumi})}
\item [\rm (a)]$\dom B^\half =\dom B^{*\half} $;
\item [\rm (b)]$\dom B^\half =\dom (B_R)^\half $;
\item [\rm (c)]$\dom B^{*\half} =\dom (B_R)^\half $;
\item [\rm (d)] $\dom B^\half \subseteq \dom (B_R)^\half $ and $\dom B^{*\half} \subseteq\dom (B_R)^\half $;
\item[\rm (e)] $\dom (B_R)^\half \subseteq\dom B^\half\cap\dom B^{*\half}$;
\item [\rm (f)] $||Z||<1$; %%% in representation \eqref{novoeghtl} for the operator $B$;
\item  [\rm (g)]$ \sup\limits_{f\in\sH\setminus\{0\}}\cfrac{\left\|(\RE (B^{-\half}))f\right\|^2}{\RE\left(B^{-1}f,f\right)}<\infty$; %for all $f\in\sH$ and for some $\beta$;
\item [\rm (h)]$\sup\limits_{f\in\sH\setminus\{0\}}\cfrac{\left\|B^{*-\half}f\right\|^2+\left\|B^{-\half}f\right\|^2}{\RE\left(B^{-1}f,f\right)}<\infty$,
\item [\rm (i)] $\dom (\RE (B^{-\half}))^{-1}=\dom (\RE(B^{-1}))^{-\half}.$
\end{enumerate}
\end{theorem}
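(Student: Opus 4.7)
The plan is to reduce every condition to a property of the contraction $Z = i(\RE B^{-\half})^{-1}\IM B^{-\half}$ from \eqref{cccont} and of the positive definite selfadjoint operator $\cL := (\RE B^{-\half})^{-1}$, using the representations $B^\half = (I+Z)^{-1}\cL$ and $B^{*\half} = (I-Z)^{-1}\cL$ provided by Theorem \ref{sqdomrav}. Since $\ker(I\pm Z)=\{0\}$ and $\cL$ is a bijection $\dom\cL\to\sH$, the domains unfold as
\[
\dom B^\half = \cL^{-1}\ran(I+Z),\qquad \dom B^{*\half}=\cL^{-1}\ran(I-Z),
\]
so every question about these domains becomes a question about the ranges $\ran(I\pm Z)$, to which Lemma \ref{gjcktte} and, in the sectorial case, Theorem \ref{vyjujtr} apply directly.

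For the first block, (i) amounts to $\ran(I+Z)=\ran(I-Z)$, which by Lemma \ref{gjcktte}(1)(b) forces both to equal $\sH$, i.e., (iii); conversely (iii) trivially implies (i). Lemma \ref{gjcktte}(1) gives (iii)$\Leftrightarrow$(iv) via $\ran(I+Z)\cap\ran(I-Z)=\ran(I-Z^2)$. For (ii), the identity \eqref{xqe1} reads $B^\half B^{*-\half}=-I+2(I+Z)^{-1}$, so boundedness with bounded inverse is equivalent to both $(I+Z)^{-1}$ and $(I-Z)^{-1}$ being bounded, i.e., (iii). Finally, $\dom B^\half\cap\dom B^{*\half}=\cL^{-1}\bigl(\ran(I+Z)\cap\ran(I-Z)\bigr)=\cL^{-1}\ran(I-Z^2)$ contains all of $\dom\cL$ iff $\ran(I-Z^2)=\sH$, giving (v)$\Leftrightarrow$(iv).

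In the sectorial case, Theorem \ref{sqdomrav}(4) places $Z$ in some $C_\sH(\alpha)$, so Theorem \ref{vyjujtr} directly yields (f)$\Leftrightarrow\ran(I\pm Z)=\sH\Leftrightarrow$(a). The chain (a)--(e) is purely form-theoretic: since $\dom\mathfrak{b}=\dom B_R^\half$ by the Second Representation Theorem, Theorem \ref{lionskato}(1) interchanges each inclusion $\dom B^{\pm\half}\subseteq\dom B_R^\half$ with the reverse inclusion for the adjoint, and the one-sided statements (b), (c), (d), (e) then collapse to (a). For the quantitative conditions I substitute $B^{-\half}=Q(I+Z)=(I-Z^*)Q$ with $Q:=\RE B^{-\half}$ to obtain $\RE(B^{-1})=Q D_{Z^*}^2 Q$; then (g) becomes $\|g\|^2\le C\|D_{Z^*}g\|^2$ on the dense set $g=Qf\in\ran Q$, which extends by continuity and is equivalent to $D_{Z^*}$ being bounded below, i.e., (f). The parallelogram identity $\|(I-Z^*)Qf\|^2+\|(I+Z^*)Qf\|^2=2\|Qf\|^2+2\|Z^*Qf\|^2$ reduces (h) to the same condition. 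For (i), I invoke the polar-decomposition identity $\ran(C^*C)^\half=\ran C^*$ applied to $C=D_{Z^*}Q$, giving $\dom(\RE B^{-1})^{-\half}=\ran(QD_{Z^*})$, so (i) reads $\ran Q=\ran(QD_{Z^*})$; since $\ker Q=\{0\}$, this is equivalent to $\ran D_{Z^*}=\sH$, which again is (f).

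The main obstacle I anticipate is the careful bookkeeping in the calculation of $\dom(\RE B^{-1})^{-\half}$ used for (i): it requires the polar-decomposition lemma $\ran(C^*C)^\half=\ran C^*$ for bounded $C$, and then uses injectivity of $Q$ to reduce the domain identity to a single range condition on $D_{Z^*}$. Everything else is a routine unfolding of the representation of Theorem \ref{sqdomrav} together with Lemma \ref{gjcktte} and Theorem \ref{vyjujtr}.
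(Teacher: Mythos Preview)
Your proposal is correct. For the first block (i)--(v) and for the quantitative equivalences (f)$\Leftrightarrow$(g)$\Leftrightarrow$(h)$\Leftrightarrow$(i) you follow essentially the paper's route, with only cosmetic differences: you use the parallelogram identity directly for (h) and the polar-decomposition identity $\ran(C^*C)^\half=\ran C^*$ for (i), whereas the paper reduces (h) to (g) via the computation $4\|\RE(B^{-\half})f\|^2=\|B^{-\half}f\|^2+\|B^{*-\half}f\|^2+2\RE(B^{-1}f,f)$ and handles (i) through Douglas's lemma applied to \eqref{htfkmyx}. Both arrive at the same range condition $\ran D_{Z^*}=\sH$.

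The genuine divergence is in the equivalence of (a)--(e). You invoke Theorem~\ref{lionskato} (the classical Lions--Kato result) to collapse these, whereas the paper first derives the explicit formula
\[
\dom B_R^\half=\ran(\RE(B^{-1}))^\half=\cL^{-1}\ran D_{Z^*}
\]
(equation~\eqref{domranhalf}, obtained from \eqref{hfdhtfk} and Douglas's lemma applied to \eqref{htfkmyx}) and then reads (b)--(e) as range comparisons between $\ran(I\pm Z)$ and $\ran D_{Z^*}$, which are exactly the conditions (ii)--(iv) of Theorem~\ref{vyjujtr}. As Remark~\ref{gjckt} makes explicit, the paper's purpose here is precisely to give a \emph{new} self-contained proof of (a)$\Leftrightarrow$(b)$\Leftrightarrow\cdots\Leftrightarrow$(e) through the contraction $Z$, bypassing the interpolation-theoretic arguments of \cite{Lions1962} and \cite{Kato1962}; your route is logically valid but sidesteps this point. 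One small fix: to get (a)$\Rightarrow$(b) from Theorem~\ref{lionskato} you actually need part~(2), not only part~(1), since it is Theorem~\ref{lionskato}(2) that sandwiches $\dom B_R^\half$ between $\dom B^\half$ and $\dom B^{*\half}$ once these coincide.
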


\begin{proof} %of Theorems \ref{sqdomrav} and \ref{sqdomrav22}.
%%%
Set $T:=B^{-\half}$. Then $T^2=B^{-1}$. We can apply Propositions \ref{pfuun}, \ref{quasherm2}, and Theorem \ref{summar}. Representations
\eqref{novoeghtl}, \eqref{cccont2}, \eqref{novoeghtl3}, \eqref{xqe1} hold true with $Z$ given by \eqref{cccont}.
So we have
\begin{equation}\label{tquft1}
\dom B^\half =\cL^{-1}\ran (I+Z),\;\dom B^{*\half} =\cL^{-1}\ran (I-Z),
\end{equation}
\begin{equation}\label{htfkmyx}
\RE\left( B^{-1}f,f\right)=\RE(T^2f,f)=\left(\cL^{-1}(I-ZZ^*)\cL^{-1}f,f\right)=||D_{Z^*}\cL^{-1}f||^2,\; f\in\sH.
\end{equation}
Hence, from \eqref{hfdhtfk} and from Douglas's lemma \cite{Doug} we get
\begin{equation}\label{domranhalf}
\dom (B_R)^\half = \ran (\RE (B^{-1}))^\half =\cL^{-1}\ran D_{Z^*}.
\end{equation}
Equalities in \eqref{xqe1} and \eqref{tquft1} imply that (i) and (ii) are equivalent.
The equivalences (i) $\Longleftrightarrow$ (iii) and (i) $\Longleftrightarrow$ (iv) follow from Lemma \ref{gjcktte}. Equalities \eqref{cccont2}, \eqref{tquft1}, and Proposition \ref{pfuun}(3) yield the equivalence of (iii) and (v).

Assume now that $B$ is $m$-sectorial.
From Theorem \ref{vyjujtr} and  equalities \eqref{tquft1}, \eqref{domranhalf}, \eqref{xqe1} we get that (a), (b), (c), (d), (e), and (f) are equivalent.

Equality \eqref{domranhalf} implies that $\ran \RE (B^{-1})^\half=\dom \cL^{-1}=\ran \RE (B^{-\half})$ is equivalent to $||Z||<1,$ i.e., (f) and (i) are equivalent.

Condition $||Z||<1$ and \eqref{htfkmyx} imply that for some $0< c\le 1$ holds
\begin{multline*}
\RE\left( B^{-1}f,f\right)\ge c ||\cL^{-1}f||^2=c||(\RE (T))f||^2=c||(\RE (B^{-\half})f||^2=c\left\|\half(B^{-\half}+B^{*-\half})f\right\|^2\\
=\cfrac{c}{4}\left(\left\|B^{*-\half}f\right\|^2+\left\|B^{-\half}f\right\|^2+2\RE(B^{-\half}f,B^{*-\half}f)\right)\\
=\cfrac{c}{4}\left(\left\|B^{*-\half}f\right\|^2+\left\|B^{-\half}f\right\|^2+2\RE(B^{-1}f,f)\right)\;\;\forall f\in\sH.
\end{multline*}
So, (f) $\Longrightarrow$(g), (f) $\Longrightarrow $(h).

Suppose (g) is fulfilled. Then there is $c>0$ such that
\[
 \left\|(\RE (B^{-\half}))f\right\|^2\le c\,{\RE\left(B^{-1}f,f\right)}\;\; \forall f\in\sH.
\]
From \eqref{htfkmyx} and \eqref{cccont2}  %%%, and $\ran \cL=\sH$,
we obtain
\[
||\cL^{-1}h||^2\le c\,||D_{Z^*}\cL^{-1}h||^2=c\,(||\cL^{-1}h||^2-||Z^*\cL^{-1}h||^2),\; \;\forall h\in\sH.
\]
This means that $||Z^*||=||Z||<1$. Consequently, (g)$\Longrightarrow$(f).

Now assume that (h) holds, i.e., there is $c>0$ such that
\[
\left\|B^{*-\half}f\right\|^2+\left\|B^{-\half}f\right\|^2\le c\,{\RE\left(B^{-1}f,f\right)}\;\;\forall f\in\sH.
\]
Then
\[
\begin{array}{l}
4||(\RE(B^{-\half}))f||^2=\left\|B^{*-\half}f\right\|^2+\left\|B^{-\half}f\right\|^2+2\RE( B^{-\half} f, B^{*-\half}f) \\
=\left\|B^{*-\half}f\right\|^2+\left\|B^{-\half}f\right\|^2+2\RE( B^{-1} f,f)\le (c+2){\RE\left(B^{-1}f,f\right)}\;\;\forall f\in\sH.
\end{array}
\]
Hence
\[
||\cL^{-1} f||^2\le \cfrac{c+2}{4}\,||D_{Z^*}\cL^{-1}f||^2\;\;\forall f\in\sH.
\]
It follows that $||Z||<1.$ Therefore (h) $\Longrightarrow $ (f). The proof is complete.
%%%
\end{proof}
\begin{remark}\label{gjckt}
The equivalences of {\rm(i)} and {\rm(ii)} in Theorem \ref{sqdomrav} and of {\rm (i)} and {\rm (ii)} in Theorem \ref{sqdomrav22} have been established in \cite[Theorem 1]{Gomilko}. We use another approach.

As has been mentioned in Introduction (see Theorem \ref{lionskato}), equivalences of {\rm (a)}, {\rm(b)}, {\rm(c)} {\rm(d)}, {\rm(e)} were established in \cite{Lions1962} and \cite{Kato1962}. The proofs in \cite{Lions1962} are based on the theory interpolation spaces and in \cite{Kato1962} on the representations of $m$-sectorial operators and their associated quadratic forms \eqref{secttt31}, \eqref{sectt21}.
 %%% %%%
\end{remark}

Now we formulate and prove an analogue of \cite[Theorem 3]{Gomilko} (see Introduction, Theorem \ref{thegom}).
\begin{theorem}\label{gomthe3}
Let $B$ be an unbounded $m$-accretive operator
having bounded inverse and let the operators $\cL$ and $Z$ be given by \eqref{cccont2} and \eqref{cccont}, respectively.
Set
\begin{equation} \label{vytytyf}
Z_\gamma:=\left((I+Z)^\gamma-(I-Z)^\gamma \right)\left((I+Z)^\gamma+(I-Z)^\gamma \right)^{-1},\; \gamma\in(0,1).
\end{equation}
Then
\begin{enumerate}
\item $Z_\gamma\in C_\sH(\cfrac{\pi\gamma}{2})$ and
\[
\cL Z^*_\gamma f=-Z_\gamma\cL f,\;\;\forall f\in \dom \cL,
\]
\item the operator
\[%\begin{equation}\label{novoeght2l}
B_\gamma=\cL\left((I+Z_\gamma)(I-Z^*_\gamma)\right)^{-1}\cL=(I+Z_\gamma)^{-1}\cL^2(I-Z^*_\gamma)^{-1},
\]%%\end{equation}
is $m-\cfrac{\pi\gamma}{2}$-sectorial and its square root is given by
\[
B_\gamma^\half=(I+Z_\gamma)^{-1}\cL=\cL(I-Z^*_\gamma)^{-1},
\]
\item  the following are equivalent:
\begin{enumerate}
\def\labelenumi{\rm (\roman{enumi})}
\item [{\rm (i)}] $\dom B^\half =\dom B^{*\half}, $
\item [{\rm(ii)}] $\dom B^\half_\gamma =\dom B^{*\half}_\gamma. $
\end{enumerate}
\end{enumerate}
\end{theorem}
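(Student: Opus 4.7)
The plan is to rewrite the scalar symbol
$f(z):=\dfrac{(1+z)^\gamma-(1-z)^\gamma}{(1+z)^\gamma+(1-z)^\gamma}$ in the form
$f(z)=\dfrac{1-w^\gamma}{1+w^\gamma}$ with $w=(1-z)/(1+z)$. As $z$ varies over $\dD$, $w$ runs through the open right half-plane and $w^\gamma$ lies in the sector $\{|\arg\zeta|<\pi\gamma/2\}$, so $\RE(1+w^\gamma)>1$ and $f$ is analytic and bounded on $\dD$. A direct Möbius-image computation (the rays $\arg\zeta=\pm\pi\gamma/2$ go to the circular arcs through $\pm 1$ and $\mp i\tan(\pi\gamma/4)$) then shows $f(\dD)\subseteq C_\dC(\pi\gamma/2)$, the lens from \eqref{gkjcvy}. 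The same substitution gives $d(z):=(1+z)^\gamma+(1-z)^\gamma=2^\gamma(1+w^\gamma)/(1+w)^\gamma$, which is bounded away from zero on $\dD$; hence $D_\gamma:=d(Z)$ is boundedly invertible and the Sz.-Nagy--Foias calculus for the contraction $Z$ identifies $Z_\gamma=f(Z)$. Since $h_\pm(z):=f(z)\sin(\pi\gamma/2)\pm i\cos(\pi\gamma/2)$ maps $\dD$ into $\overline{\dD}$, von Neumann's inequality yields $\|h_\pm(Z)\|\le 1$, which, by Definition~\ref{jghtlc}, means exactly $Z_\gamma\in C_\sH(\pi\gamma/2)$.

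For the intertwining claim in (1) I would first upgrade $\cL Z^*=-Z\cL$ on $\dom\cL$ to the resolvent identity $\cL((t+1)I+Z^*)^{-1}=((t+1)I-Z)^{-1}\cL$ on $\dom\cL$ for every $t>0$, and then feed it into the Balakrishnan representation \eqref{lhjcnt} applied to the bounded accretive operators $I\pm Z^*$, producing
\[
\cL(I\pm Z^*)^\gamma=(I\mp Z)^\gamma\cL\quad\text{on }\dom\cL.
\]
This gives $\cL N_\gamma^*=-N_\gamma\cL$ and $\cL D_\gamma^*=D_\gamma\cL$ on $\dom\cL$, where $N_\gamma:=(I+Z)^\gamma-(I-Z)^\gamma$, and bounded invertibility of $D_\gamma$ promotes the second identity to $\cL(D_\gamma^*)^{-1}=D_\gamma^{-1}\cL$ on $\dom\cL$. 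Because $N_\gamma$ and $D_\gamma$ both arise from the functional calculus of $Z$, they commute, so $Z_\gamma=N_\gamma D_\gamma^{-1}=D_\gamma^{-1}N_\gamma$, and for $f\in\dom\cL$
\[
\cL Z_\gamma^*f=\cL(D_\gamma^*)^{-1}N_\gamma^*f=D_\gamma^{-1}\cL N_\gamma^*f=-D_\gamma^{-1}N_\gamma\cL f=-Z_\gamma\cL f.
\]

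For (2) I would run Theorem~\ref{summar} in reverse with the pair $(\cL,Z_\gamma)$: setting $Q_\gamma:=\cL^{-1}$ and $T_\gamma:=Q_\gamma(I+Z_\gamma)=(I-Z_\gamma^*)Q_\gamma$ gives, by Proposition~\ref{pfuun}, a bounded $\pi/4$-sectorial operator with $\ker T_\gamma=\{0\}$, and Proposition~\ref{pfuun}(5) combined with $Z_\gamma\in C_\sH(\pi\gamma/2)$ makes $T_\gamma^2$ $\pi\gamma/2$-sectorial; then $B_\gamma:=T_\gamma^{-2}$ is the claimed $m$-$\pi\gamma/2$-sectorial operator, and inverting the factorisations yields $B_\gamma^{1/2}=T_\gamma^{-1}=(I+Z_\gamma)^{-1}\cL=\cL(I-Z_\gamma^*)^{-1}$ and $B_\gamma=\cL((I+Z_\gamma)(I-Z_\gamma^*))^{-1}\cL$. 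For (3), a short computation using $Z_\gamma^*\cL^{-1}=-\cL^{-1}Z_\gamma$ verifies that by construction $\cL=(\RE B_\gamma^{-1/2})^{-1}$ and $Z_\gamma=i(\RE B_\gamma^{-1/2})^{-1}\IM B_\gamma^{-1/2}$, so Theorem~\ref{sqdomrav22}\,(i)$\Leftrightarrow$(iii) applies to both $B$ and $B_\gamma$ and reduces the problem to
\[
\ran(I+Z)=\ran(I-Z)=\sH\iff\ran(I+Z_\gamma)=\ran(I-Z_\gamma)=\sH.
\]
The formulae $I\pm Z_\gamma=2(I\pm Z)^\gamma D_\gamma^{-1}$ and bounded invertibility of $D_\gamma$ give $\ran(I\pm Z_\gamma)=\ran(I\pm Z)^\gamma$, while for the bounded accretive operator $A:=I\pm Z$ with $\ker A=\{0\}$ the spectral mapping theorem yields $0\in\sigma(A^\gamma)\iff 0\in\sigma(A)$, i.e.\ $\ran A^\gamma=\sH\iff\ran A=\sH$. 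Combining these equivalences proves (3).

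The principal technical hurdle will be the second paragraph: because $\cL$ is unbounded, every step must respect $\dom\cL$, and only an integral representation such as Balakrishnan's—which carries the algebraic relation $\cL Z^*=-Z\cL$ through each individual resolvent $((t+1)I+Z^*)^{-1}$—appears to transport the anti-commutation cleanly into the fractional powers $(I\pm Z^*)^\gamma$; a bare polynomial approximation would require extra care to preserve $\dom\cL$ in the limit.
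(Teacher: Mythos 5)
Your proposal is correct in substance and, for parts (2) and (3), follows essentially the same route as the paper: build $B_\gamma$ from the pair $(Q,Z_\gamma)$ with $Q=\cL^{-1}$ via Proposition \ref{pfuun}, identify $(Q,Z_\gamma)$ as the canonical pair of $B_\gamma$, and reduce the domain question to $\ran(I\pm Z_\gamma)=\ran(I\pm Z)^\gamma$ together with the equivalence of surjectivity of $I\pm Z$ and of $(I\pm Z)^\gamma$ (which you justify by spectral mapping; the paper simply asserts it). The genuine divergence is in part (1). The paper proves $Z_\gamma\in C_\sH(\frac{\pi\gamma}{2})$ by recognizing $Z_\gamma=(I-F^\gamma)(I+F^\gamma)^{-1}$ with $F=(I-Z)(I+Z)^{-1}=B^{\half}B^{*-\half}$ $m$-accretive (see \eqref{xqe1}), so that $F^\gamma$ is $m$-$\frac{\pi\gamma}{2}$-sectorial and Proposition \ref{cdjqcndf} applies; invertibility of $(I+Z)^\gamma+(I-Z)^\gamma$ and the range identities come for free from $-1\in\rho(F^\gamma)$. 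You instead argue at the level of the scalar symbol $f$ on $\dD$ plus von Neumann's inequality, which is more elementary but hinges on the identification $Z_\gamma=f(Z)$, i.e., on the compatibility of the Balakrishnan definition of $(I\pm Z)^\gamma$ used in \eqref{vytytyf} with the disk-algebra (polynomial-limit) calculus for the contraction $Z$; this is standard (and the paper itself invokes exactly this identification in Remark \ref{gjlheu}), but it is an extra compatibility step you should state and verify, whereas the paper's Cayley-transform route avoids it entirely. Likewise, for the anticommutation you push $\cL Z^*=-Z\cL$ through resolvents and the Balakrishnan integral with the unbounded $\cL$, correctly flagging the domain care required; the paper works instead with the bounded relation $QZ=-Z^*Q$, obtains $Q(I\pm Z)^\gamma=(I\mp Z^*)^\gamma Q$ directly from \eqref{lhjcnt}, deduces $QZ_\gamma=-Z_\gamma^*Q$ algebraically, and only then translates back to $\cL$ — a cleaner path you could adopt to shorten your second paragraph. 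Finally, a cosmetic point: in (3) you invoke Theorem \ref{sqdomrav22} for $B_\gamma$, whose statement assumes unboundedness; either note that $B_\gamma$ is unbounded exactly when $B$ is, or argue as the paper does directly from $\dom B_\gamma^{\half}=Q\ran(I+Z_\gamma)$, $\dom B_\gamma^{*\half}=Q\ran(I-Z_\gamma)$ and Lemma \ref{gjcktte}.
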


\begin{proof} Let $Q:=\cL^{-1}$. By Theorem \ref{sqdomrav} the operator $R:=B^{-1}$ takes the form
\[
R=Q(I+Z)(I-Z^*)Q
\]
and
\[
R^\half=B^{-\half}=Q(I+Z)=(I-Z^*)Q,\; R^{*\half}=B^{*-\half}=Q(I-Z)=(I+Z^*)Q.
\]
Since $QZ=-Z^*Q$, from  \eqref{lhjcnt}, we obtain the equality
$$Q(I\pm Z)^\gamma =(I\mp Z^*)^\gamma Q,\; \gamma\in(0,1). $$

Set
\[%%
F:=(I-Z)(I+Z)^{-1}.
\]%%
Then $F$ is an $m$-accretive operator, moreover, due to \eqref{xqe1} we have the equality $F=B^\half B^{*-\half}.$

If $\gamma\in(0,1)$, then $F^\gamma$
is $m-\cfrac{\pi\gamma}{2}$-sectorial operator and hence
\[
(I-F^\gamma)(I+F^\gamma)^{-1}\in C_\sH(\cfrac{\pi\gamma}{2}).
\]
Clearly
\[
(I-F^\gamma)(I+F^\gamma)^{-1}=\left((I+Z)^\gamma-(I-Z)^\gamma \right)\left((I+Z)^\gamma+(I-Z)^\gamma \right)^{-1}=Z_\gamma.
\]
Since
\[
Z_\gamma\left((I+Z)^\gamma+(I-Z)^\gamma \right)=(I+Z)^\gamma-(I-Z)^\gamma,
\]
we get
\[
QZ_\gamma=-Z^*_\gamma  Q.
\]
Hence, $\cL Z^*_\gamma f=-Z_\gamma\cL f,$ $f\in\dom\cL.$ Besides
\[
\ran(I\pm Z_\gamma)=\ran (I\pm Z)^\gamma.
\]
By Proposition \ref{pfuun}, the operator
\[
R_\gamma:=Q(I+Z_\gamma)(I-Z^*_\gamma)Q
\]
is bounded $\cfrac{\pi\gamma}{2}$-sectorial and
\[
R^\half_\gamma=Q(I+Z_\gamma)=(I-Z^*_\gamma)Q,\;R^{*\half}_\gamma=Q(I-Z_\gamma)=(I+Z^*_\gamma)Q.
\]
Because
\[
\ran (I\pm Z_\gamma)=\sH\Longleftrightarrow\ran (I\pm Z)=\sH,
\]
we get that
\[
\ran R^\half_\gamma =\ran R^{*\half}_\gamma\Longleftrightarrow \ran R^\half=\ran R^{*\half}.
\]

\end{proof}
\begin{remark}\label{gjlheu} The function
\[
w_\gamma:=\cfrac{(1+z)^\gamma-(1-z)^\gamma}{(1+z)^\gamma+(1-z)^\gamma}=\cfrac{1-\left(\cfrac{1-z}{1+z}\right)^\gamma}{1+\left(\cfrac{1-z}{1+z}\right)^\gamma},\; \;z\in\dD,\;\gamma\in(0,1)
\]
is odd, holomorphic in $\dD$ and has values in the set $C_\dC(\cfrac{\pi\gamma}{2})$ \eqref{gkjcvy}. By the Schwarz lemma it has the  representation
\[
w_\gamma(z)=z\psi_\gamma(z),\; z\in\dD,
\]
where $\psi_\gamma(z)$ is even, holomorphic in $\dD$ and $|\psi_\gamma(z)|< 1,$ $z\in\dD$,
\[
\psi_\gamma(z)=\cfrac{w_\gamma(z)}{z},\; z\in\dD\setminus\{0\}, \; \psi_\gamma(0)=\gamma.
\]
 Hence, by means of the functional calculus for contractions \cite{SF} the operator $Z_\gamma$ given by \eqref{vytytyf} can be represented as follows:
\[
Z_\gamma=Z\psi_\gamma(Z).
\]
Therefore, $\ran Z_\gamma\subseteq\ran Z$.
In particular, $Z_\half=Z\left(I+(I-Z^2)^\half\right)^{-1}.$

\end{remark}

\section{Examples}
\label{exampll}

Here we present abstract (counter)examples related to the Kato square root problem.
 Recall that the Hilbert space $\sH$ is supposed to be infinite-dimensional and separable.

\subsection{Square roots of m-accretive operators}
We begin with a general result related to an arbitrary maximal accretive operators of the form $i\cA$, where $\cA$ is a maximal symmetric non-selfajoint operator. This is a generalization of Lions' example \cite{Lions1962} (see \eqref{lionsop}). We apply our methods from Section \ref{rjhtymrd}.

\begin{theorem} \label{maxsym1}
Let $\cA$ be a non-selfadjoint maximal symmetric operator in the Hilbert space $\sH$, $i\in\rho(\cA)$. Then for maximal accretive operator $\cB=i\cA$ holds the inequality $\dom\cB^{\half}\ne \dom\cB^{*\half}$.
\end{theorem}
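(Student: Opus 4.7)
Since $\dom(\cB + I)^{1/2} = \dom \cB^{1/2}$ and analogously for adjoints, I may replace $\cB$ by $B := \cB + I = i(\cA - iI)$; the hypothesis $i \in \rho(\cA)$ makes $B$ boundedly invertible with $B^{-1} = -iR$ for $R := (\cA - iI)^{-1}$.

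A short computation exploiting the symmetry $\cA \subset \cA^{*}$ yields the resolvent identity $R - R^{*} = 2iR^{*}R$: for $u \in \sH$, $\xi := Ru - R^{*}u$ lies in $\dom \cA^{*}$ with $(\cA^{*} + iI)\xi = 2iRu$, hence $\xi = 2iR^{*}Ru$. Equivalently $\RE B^{-1} = R^{*}R$. Writing $\cV^{-1} := (\cA + iI)(\cA - iI)^{-1} = I + 2iR$ for the inverse Cayley transform of $\cA$, this reads $B^{-1} = \tfrac12(I - \cV^{-1})$ and $B^{*-1} = \tfrac12(I - (\cV^{-1})^{*})$. Since $I - \cV^{-1} = 2B^{-1}$ is accretive, uniqueness of the accretive square root together with the Sz.-Nagy--Foias functional calculus for the contraction $\cV^{-1}$ (applied to the bounded analytic function $(1-z)^{1/2}$) gives
\[
B^{-1/2} = \tfrac{1}{\sqrt 2}(I - \cV^{-1})^{1/2},\qquad B^{*-1/2} = \tfrac{1}{\sqrt 2}(I - (\cV^{-1})^{*})^{1/2}.
\]

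I argue by contradiction. Suppose $\dom \cB^{1/2} = \dom \cB^{*1/2}$. By Theorem \ref{sqdomrav22}(ii) the operator
\[
G := B^{1/2}B^{*-1/2} = (I - \cV^{-1})^{-1/2}(I - (\cV^{-1})^{*})^{1/2}
\]
is bounded on $\sH$. On the other hand, because $\cA$ is maximal symmetric non-selfadjoint with $n_{-} = 0$, its defect index $n_{+} := \dim\ker(\cA^{*} - iI) \geq 1$; thus $\cV^{-1}$ is a proper isometry and the defect space $\cN_{+} := \ker(\cV^{-1})^{*} = \ker(\cA^{*} - iI)$ is nontrivial.

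The technical heart -- and the main obstacle -- is to test $G$ on a unit vector $\phi \in \cN_{+}$. Since $(\cV^{-1})^{*}\phi = 0$, the Sz.-Nagy--Foias calculus evaluated at $0$ gives $(I - (\cV^{-1})^{*})^{1/2}\phi = \phi$, so $G\phi = (I - \cV^{-1})^{-1/2}\phi$; boundedness of $G$ then forces $\phi \in \ran(I - \cV^{-1})^{1/2}$. However, $\phi$ is a wandering vector for the isometry $\cV^{-1}$, so the cyclic subspace it generates is a shift-reducing subspace on which $\cV^{-1}$ is unitarily equivalent to multiplication by $z$ on $H^{2}(\dD)$, with $\phi$ corresponding to the constant function $1$. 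Because this subspace and its orthogonal complement are invariant under $(I - \cV^{-1})^{1/2}$, $\phi \in \ran(I - \cV^{-1})^{1/2}$ would require $(1 - z)^{-1/2} \in H^{2}(\dD)$, which fails since $\int_{\partial \dD}|1 - \zeta|^{-1}|d\zeta| = \infty$. This contradiction proves the theorem.
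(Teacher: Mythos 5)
Your argument is correct, and after a shared first step it takes a genuinely different route from the paper's. Both proofs begin the same way: you pass to $B=\cB+I$ and to the non-unitary isometry $W=I+2iR=(\cA+iI)(\cA-iI)^{-1}$, which is exactly the paper's $\cU=(\cB-I)(I+\cB)^{-1}$, and both identify $\dom\cB^{\half}=\ran(I-W)^{\half}$ and $\dom\cB^{*\half}=\ran(I-W^{*})^{\half}$. From there the paper stays inside its Section \ref{rjhtymrd} machinery: it factors $(I-\cU)^{\half}=\cQ(I+\cZ)$ as in Proposition \ref{quasherm2}, rewrites $\cU^{*}\cU=I$ as $\cM^{*}(\cQ^{2}\cM-I)=\cM$ with $\cM=(I+\cZ)(I-\cZ^{*})$, and shows that equality of the two domains would force $0\in\rho(\cU)$, contradicting $\ran\cU\ne\sH$. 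You instead use the Wold decomposition: a unit defect vector $\phi\in\ker(\cA^{*}-iI)=\ker W^{*}$ generates a reducing copy of the unilateral shift, the absolutely convergent binomial series gives $(I-W^{*})^{\half}\phi=\phi$, and $\phi\in\ran(I-W)^{\half}$ would force $(1-z)^{-1/2}\in H^{2}(\dD)$, which fails; this yields an explicit vector (any defect vector of $\cA$ at $i$) lying in $\dom\cB^{*\half}\setminus\dom\cB^{\half}$, which is more informative than the paper's pure contradiction argument, at the price of importing the shift model and Hardy-space facts instead of the paper's elementary range lemmas. (Your appeal to Theorem \ref{sqdomrav22}(ii) is not even needed: the assumed domain equality already puts $\phi=(I-W^{*})^{\half}\phi$ into $\ran(I-W)^{\half}$.) Two small points deserve to be made explicit: the identification $B^{-\half}=\tfrac{1}{\sqrt 2}\,(I-W)^{\half}$ requires the remark that the disc-algebra calculus applied to $(1-z)^{1/2}$ produces an accretive operator (e.g.\ via unitary dilation, or because the Balakrishnan integral for $(I-W)^{\half}$ is a norm limit of rational functions of $W$), so that uniqueness of the accretive square root applies; and the non-membership $(1-z)^{-1/2}\notin H^{2}(\dD)$ is cleanest from the Taylor coefficients $\binom{2n}{n}4^{-n}\sim(\pi n)^{-1/2}$, since quoting only the divergence of the boundary integral needs an extra word about radial limits.
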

\begin{proof}
Because $\RE(\cB f,f)=0$ for all $f\in\dom\cB$, the operator $\cB$ is accretive. Since $i\in\rho(\cA)$, we get $-1\in\rho(\cB)$. Therefore $\cB$ is $m$-accretive, the square root
$\cB^\half$ is $m-\cfrac{\pi}{4}$-sectorial. Set $$\cU:=(\cB-I)(I+\cB)^{-1}=(I-i\cA)(I+i\cA)^{-1}.$$
Then $\cU$ is a non-unitary isometry because $\ran \cU$ is a proper subspace of $\sH$. This means that $ 0\in\sigma(\cU)$. The operator $I-\cU$ is accretive and since
\[
I-\cU=2(I+\cB)^{-1},
\]
we obtain the equalities
$$\dom\cB^\half=\dom (I+\cB)^\half= \ran (I-\cU)^{\half},\; \dom\cB^{*\half}=\dom (I+\cB^*)^{\half}= \ran (I-\cU^*)^{\half}.$$
Since $\ker (I-\cU)=\ker(I-\cU^*)=\{0\}$ and the operator $(I-\cU)^\half$ is $m-\cfrac{\pi}{4}$-sectorial, we get (see \eqref{zlhf})
$$\ker\RE((I-\cU)^\half)=\{0\}.$$
Set
\[
\cQ=\RE((I-\cU)^\half), \; \cZ=i \cQ^{-1}\IM ((I-\cU)^\half).
\]
Then $\cQ\cZ=-\cZ^*\cQ$ and
according to Proposition \ref{quasherm2} the operators $(I-\cU)^\half,$ $(I-\cU^*)^{\half}$, $I-\cU$, and $I-\cU^*$ admit the representations
\[
\begin{array}{l}
(I-\cU)^\half=\cQ(I+\cZ)=(I-\cZ^*)Q,\\[2mm]
(I-\cU^*)^{\half}=\cQ(I-\cZ)=(I+\cZ^*)\cQ,\\[2mm]
I-\cU=(\cQ(I+\cZ))^2=((I-\cZ^*)\cQ)^2=\cQ(I+\cZ)(I-\cZ^*)\cQ,\\[2mm]
I-\cU^*=(\cQ(I-\cZ))^2=((I+\cZ^*)\cQ)^2=\cQ(I-\cZ)(I+\cZ^*)\cQ.
\end{array}
\]
Set
$$\cM:=(I+\cZ)(I-\cZ^*),$$
then $I-\cU=\cQ\cM\cQ,$ $I-\cU^*=\cQ\cM^*\cQ$,
\[
\cU=I-\cQ\cM\cQ,\;\cU^*=I-\cQ\cM^*\cQ.
\]
Hence,
\[
\cU^*\cU=I\Longleftrightarrow \cM^*\cQ^2\cM=\cM+\cM^*\Longleftrightarrow \cM^*(\cQ^2\cM-I)=\cM.
\]
Now assume that $ \ran (I-\cU)^{\half}= \ran (I-\cU^*)^{\half}$. This yields (see Proposition \ref{pfuun}) that $\ran (I+\cZ)=\ran(I-\cZ)=\sH$. It follows that
$0\in\rho(\cM)$. Then the equality $\cM^*(\cQ^2\cM-I)=\cM$ implies that $1\in\rho(\cQ^2\cM)$. It follows that $1\in\rho(\cQ\cM\cQ).$ Since $I-\cQ\cM\cQ=\cU$, we get that $0\in\rho(\cU)$. Contradiction, because $\ran \cU\ne\sH.$  Therefore $\ran (I-\cU)^{\half}\ne \ran (I-\cU^*)^{\half}$, i.e., $\dom\cB^\half\ne\dom\cB^{*\half}$.
 \end{proof}
 Note that for the operator $\cB$ in Theorem \ref{maxsym1} the inclusion $\dom \cB \subset\dom \cB^*$ holds. In the next theorem we construct $m$-accretive non-sectorial operator $B_0$ such that
 \begin{equation}\label{xnjvscnh}
 \begin{array}{l}
 \dom B^\half_0\ne\dom B^{*\half}_0,\;  iB^\half_0 B^{*-\half}_0\quad\mbox{is maximal symmetric but non-selfadjoint},\\[2mm]
  \dom B_0\cap\dom B^*_0=\{0\}.
  \end{array}
\end{equation}
Further we will need a result established in \cite[Theorem 5.1]{schmud}: \textit{if $\cR$ is an operator range (the domain of an unbounded selfadjoint operator or a dense linear manifold, which is the range of a bounded nonnegative selfadjoint), then there is a subspace $\sM$ such that}
 $$\sM\cap\cR=\{0\}\quad\mbox{and}\quad \sM^\perp\cap\cR=\{0\}.$$

\begin{theorem}\label{ghbvths}
Let $Q$ be a bounded nonnegative selfadjoint operator, $\ker Q=\{0\}$ and $\ran Q\ne \sH$. Let $\sM$ be a proper subspace in $\sH$ such that $\sM^\perp\cap
\ran Q=\{0\}$.
Set
\begin{equation}\label{jcyjdygh}
T_0=Q+i\left(QP_\sM Q\right)^\half .
\end{equation}
Then
\begin{itemize}
\item the operator $T_0$ is bounded and $\cfrac{\pi}{4}$-sectorial, $\ker  T_0=\{0\}$;
\item $\sM^\perp\cap\ran T_0=\sM^\perp\cap\ran T_0^*=\{0\}$;
\item $\ran T_0\ne \ran T_0^*$;
\item
the square
\[
T_0^2=QP_{\sM^\perp} Q+i\left(Q\left(QP_\sM Q\right)^\half +\left(QP_\sM Q\right)^\half Q\right),
\]
is accretive and non-sectorial.
\end{itemize}
Moreover, the following are equivalent:
\begin{enumerate}
\def\labelenumi{\rm (\roman{enumi})}
 \item $\sM\cap \ran Q=\{0\}$;
\item $\ker\RE(T_0^2)=\{0\}$;
\item $\ran  T_0^2\cap\ran T_0^{*2}=\{0\}$.
\end{enumerate}

Set $B_0:=(T_0^2)^{-1}$. Then
\begin{enumerate}
\item $B_0$ is $m$-accretive but not sectorial,
\item $\dom B_0^\half \ne\dom B_0^{*\half} ,$
\item the operator $B^\half_0 B^{*-\half}_0$ is $m$-accretive,
$$\RE(B^\half_0 B^{*-\half}_0h,h)=0\;\; \forall h\in\dom ( B^\half_0 B^{*-\half}_0),$$
and $\dom B^\half_0 B^{*-\half}_0 \varsubsetneqq\dom (B^\half_0 B^{*-\half}_0)^*$,
\item $\dom B_0\cap\dom B^*_0=\{0\}$ if and only if $\sM\cap\ran Q=\{0\}$.
\end{enumerate}
\end{theorem}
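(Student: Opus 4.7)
The argument divides into three stages: routine verifications about $T_0$ and $T_0^2$; identification of the contraction $Z$ associated with $T_0$ via Proposition \ref{quasherm2} as a non-unitary non-selfadjoint isometry, which is the technical heart; and mechanical deductions about $B_0$ from Theorems \ref{sqdomrav} and \ref{sqdomrav22}. Write $W := (QP_\sM Q)^{\half}$, so $\RE T_0 = Q$ and $\IM T_0 = W$; from $W^2 = QP_\sM Q \le Q^2$ the L\"owner--Heinz inequality gives $W \le Q$, so $T_0$ is $\pi/4$-sectorial, and injectivity of $T_0, T_0^*$ follows since $T_0 f = 0$ forces $(Qf, f) = 0$. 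Expanding yields $T_0^2 = QP_{\sM^\perp} Q + i(QW + WQ)$ with nonnegative real part. For $\sM^\perp \cap \ran T_0 = \{0\}$, projecting $T_0 f \in \sM^\perp$ on $\sM$ gives $P_\sM Qf = -i P_\sM Wf$; the identity $\|Wf\|^2 = \|P_\sM Qf\|^2$ forces $Wf \in \sM$, whence $P_\sM Qf = -i Wf$, and applying $Q$ yields $(W + iQ)Wf = iT_0^*(Wf) = 0$, so $Wf = 0$; the identity $\ker W = \{0\}$ (because $QP_\sM Qf = 0$ forces $Qf \in \sM^\perp \cap \ran Q = \{0\}$) then gives $f = 0$. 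The argument for $T_0^*$ is symmetric.

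The structural core is to show that $Z$ is an isometry. Proposition \ref{quasherm2} yields a contraction $Z$ (morally $iQ^{-1}W$) with $T_0 = Q(I+Z) = (I-Z^*)Q$, $QZ = -Z^*Q$, and $\ker(I \pm Z) = \{0\}$. Introduce the map $X : \ran Q \to \sH$, $X(Qh) := Wh$: then $\|XQh\| = \|Wh\| = \|P_\sM Qh\| \le \|Qh\|$, so $X$ extends by continuity to a bounded operator on $\sH$ satisfying $X^* X = P_\sM$. The previously established $\ker W = \{0\}$ together with $\ran W \subseteq \ran X$ gives $\cran X = \sH$ and hence $XX^* = I$; so $X$ is a co-isometry, and tracking adjoints in Proposition \ref{quasherm2} shows $Z = iX^*$ is an isometry on $\sH$ with $\ran Z = \sM$. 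Since $\sM \subsetneq \sH$ this isometry is non-unitary, so $\sigma(Z) = \overline{\dD}$, and the points $\pm 1$ lie in the spectrum but not (by $\ker(I \pm Z) = \{0\}$) in the point spectrum; the open mapping theorem then forces $\ran(I \pm Z) \ne \sH$, and Proposition \ref{pfuun}(3) delivers $\ran T_0 \ne \ran T_0^*$. Further, $Z = Z^*$ would imply $X^* X = XX^*$, i.e.\ $P_\sM = I$, contradicting $\sM \subsetneq \sH$; so by the observation in Section \ref{jklckfc} that isometries lie in $\wt C_\sH$ only if selfadjoint, and Proposition \ref{pfuun}(5), $T_0^2$ is not sectorial.

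The equivalences (i)--(iii) and the properties of $B_0$ now assemble from what is in hand. (i)$\Leftrightarrow$(ii) is the computation $\ker \RE T_0^2 = \{f : Qf \in \sM \cap \ran Q\}$ together with $\ker Q = \{0\}$. Since $\ran Z = \sM$ and $\ker Z = \{0\}$, Proposition \ref{pfuun}(6) gives (i)$\Rightarrow$(iii); conversely, any nonzero $Qf_0 \in \sM$ makes $T_0^2 f_0 = i(QW + WQ) f_0 = T_0^{*2}(-f_0)$ a nonzero element of $\ran T_0^2 \cap \ran T_0^{*2}$. For $B_0 := (T_0^2)^{-1}$: m-accretivity and non-sectoriality descend from $T_0^2$ (injective with dense range); $\dom B_0^\half \ne \dom B_0^{*\half}$ is the restatement by Theorem \ref{sqdomrav22} of the already-established $\ran(I \pm Z) \ne \sH$; the formula $B_0^\half B_0^{*-\half} = (I-Z)(I+Z)^{-1}$ from Theorem \ref{sqdomrav}(3) combined with $\|Zf\| = \|f\|$ gives $\RE(B_0^\half B_0^{*-\half} h, h) = \|f\|^2 - \|Zf\|^2 = 0$ for $h = (I+Z)f$, and the non-surjectivity of $I+Z$ upgrades this to the strict inclusion $\dom B_0^\half B_0^{*-\half} \subsetneq \dom (B_0^\half B_0^{*-\half})^*$; finally (4) is the identification $\dom B_0 \cap \dom B_0^* = \ran T_0^2 \cap \ran T_0^{*2}$ combined with the (i)$\Leftrightarrow$(iii) equivalence.

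The main obstacle is the isometry identification in the second paragraph: the explicit form $W = (QP_\sM Q)^\half$ does not transparently reveal that $iQ^{-1}W$ preserves the norm, and only the combined force of $\sM^\perp \cap \ran Q = \{0\}$ and $\ker Q = \{0\}$ promotes the partial isometry $X$ to the co-isometry $XX^* = I$; with $Z$ known to be a non-unitary non-selfadjoint isometry, every remaining claim is extracted directly from the machinery of Sections 3 and 4.
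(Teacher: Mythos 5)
Your argument is essentially the paper's: you identify the contraction of Proposition \ref{quasherm2} as the non-unitary isometry $Z=iQ^{-1}(QP_\sM Q)^\half$ mapping $\sH$ onto $\sM$, and then harvest everything from Lemma \ref{gjcktte}, Propositions \ref{pfuun}, \ref{quasherm2} and Theorems \ref{sqdomrav}, \ref{sqdomrav22}. Your local variations are all legitimate and sometimes simpler: L\"owner--Heinz ($W\le Q$) instead of Proposition \ref{quasherm2}(1) for $\tfrac{\pi}{4}$-sectoriality; the spectrum of a non-unitary isometry plus the open mapping theorem instead of the paper's Cayley-transform argument to get $\ran(I\pm Z)\ne\sH$; non-selfadjointness of the isometry $Z$ (rather than the defect operators $D_Z=0\ne D_{Z^*}$) to rule out sectoriality of $T_0^2$; and a direct computation for $\sM^\perp\cap\ran T_0=\{0\}$, where the one-line observation $\ran T_0\subseteq\ran Q$ would already suffice. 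Your handling of (i)--(iii) and of items (1), (2), (4) for $B_0$ matches the paper.

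The one step that does not stand as written is the strict inclusion $\dom (B^\half_0 B^{*-\half}_0)\varsubsetneqq\dom (B^\half_0 B^{*-\half}_0)^*$, which you attribute to the non-surjectivity of $I+Z$. That is not the operative obstruction: for a unitary $Z$ with $-1$ in its continuous spectrum, $I+Z$ is injective and non-surjective, yet $F:=(I-Z)(I+Z)^{-1}$ is skew-adjoint and $\dom F=\dom F^*$; so ``$\ran(I+Z)\ne\sH$ implies strictness'' is false in general. What forces strictness is the non-unitarity of $Z$, which you have already established but do not invoke here. Concretely, with $F_0=(I-Z)(I+Z)^{-1}$ and $h=(I+Z)f$ one has $(F_0+I)h=2f$ and $(F_0-I)h=-2Zf$, so $\ran(F_0+I)=\sH$ while $\ran(F_0-I)=\ran Z=\sM\ne\sH$; since $\RE(F_0h,h)=0$ makes $iF_0$ symmetric (which also supplies the inclusion $\dom F_0\subseteq\dom F_0^*$ that your write-up takes for granted), these range identities show $iF_0$ is maximal symmetric with deficiency indices $(0,\dim\sM^\perp)$, hence non-selfadjoint, and strictness follows. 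This is exactly the paper's argument, and it is a one-line repair with the ingredients you already have.
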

\begin{proof}
Since $\RE (T_0)=Q\ge 0,$ the bounded operator $T_0$ is accretive. The condition $\ker Q=\{0\}$ yields that $\ker T_0=\ker T^*_0=\{0\}$. From \eqref{jcyjdygh} we get
\begin{equation}\label{pfvhtf}
\RE (T_0^2)=Q^2-QP_\sM Q=QP_{\sM^\perp} Q\ge 0.
\end{equation}
Therefore, the operator $T_0^2$ is accretive and $T_0$ is the accretive square root of $T_0^2$. By Proposition \ref{quasherm2} %%%Hence $T_0$ is $\cfrac{\pi}{4}$-sectorial.
the operator $T_0$ is $\cfrac{\pi}{4}$-sectorial.

Equality \eqref{pfvhtf} yields that $\ker \RE (T_0^2)=\{0\}$ if and only if $\ran Q\cap\sM=\{0\}$.
Because
\[
\left\|\left(QP_\sM Q\right)^\half f\right\|^2=\left\|P_\sM Q f\right\|^2\;\;\forall f\in\sH,
\]
the operator $\left(QP_\sM Q\right)^\half $ admits the representation
\[
\left(QP_\sM Q\right)^\half =VP_\sM Q=QV^*,
\]
where $V:\sM\to\sH$ is an isometry. Since $\ran Q\cap\sM^\perp=\{0\}$, we get that $\cran\left(QP_\sM Q\right)^\half =\sH$. Therefore, $\ran V=\sH$ and, acting in $\sH$, the operator
\begin{equation}\label{jghz}
Z_0:=iV^*=i Q^{-1}\left(QP_\sM Q\right)^\half %%%%=(T_0+T_0^*)^{-1}(T_0-T_0^*)
\end{equation}
isometrically maps $\sH$ onto $\sM$. Hence, $Z_0$ is a non-unitary isometry in $\sH$, $\ker Z^*_0=\sM^\perp$ and $Z_0$ satisfies the equality
$QZ_0=-Z_0^*Q$. Because $D_{Z_0}=0$ and $D_{Z^*_0}=P_{\sM^\perp}\ne 0,$ the operator $Z_0$ does not belong to the class $\wt C_\sH$ (see Theorem \ref{cnfhzntj} and \eqref{defrav}). Therefore, by Proposition \ref{quasherm2}, the operator $T^2_0$ is not sectorial.

From \eqref{jcyjdygh} it follows that the operators $T_0$ and $T_0^*$ admit representations
$$T_0=Q(I+Z_0),\; T_0^*=Q(I-Z_0).$$
Therefore, $\ker(I+Z_0)=\ker(I-Z_0)=\{0\}$. Applications of Lemma \ref{gjcktte} gives
\[
\ran T_0\cap\ran T_0^*=Q\left(\ran (I+Z_0)\cap\ran (I-Z_0)\right)=Q\ran (I-Z_0^{2}).
\]
If $\ran(I-Z_0)=\sH$, then the operator $\wh S=i(I+Z_0)(I-Z_0)^{-1}$  is a bounded selfadjoint and $Z_0=(\wh S-iI)(\wh S+iI)^{-1}$ is the Cayley transform of $\wh S$. Hence, $\ran Z_0=\sH.$
This contradicts to the inclusion $\ran Z_0=\sM\varsubsetneqq\sH.$ Therefore $\ran (I-Z_0)\ne \sH$ and similarly, $\ran (I+Z_0)\ne \sH$. From Lemma \ref{gjcktte} it follows that
\[
\ran (I+Z_0)\ne \ran (I-Z_0).
\]
%%%
Thus, $\ran T_0\ne\ran T_0^*$.

The operators $T_0^2$ and $T_0^{*2}$  take the form
\[
\begin{array}{l}
T_0^2=Q(I+Z_0)(I-Z_0^*)Q=Q\left(P_{\sM^\perp}+(Z_0-Z_0^*)\right)Q,\\[2mm]
T_0^{*2}=Q(I-Z_0)(I+Z_0^*)Q=Q\left(P_{\sM^\perp}-(Z_0-Z_0^*)\right)Q.
\end{array}
\]
Observe that $\ker(Z_0-Z^*_0)=\{0\}$. Actually, if $Z_0f=Z_0^*f,$ then $f=Z_0^{*2}f.$ This implies $f=0$.

If $\sM\cap\ran Q\ne\{0\}$, then there is $f\ne 0$ such that $Qf\in\sM$. Hence $P_{\sM^\perp}Qf=0$ and
\[
T^2_0 f=Q(Z_0-Z^*_0)Qf= T_0^{*2}(-f)\ne 0.
\]

Suppose $\sM\cap\ran Q=\{0\}$. Because $\ran Z_0=\sM$, $\ker Z_0=\{0\}$ and $\ker (I-Z_0^2)=\{0\}$ we can apply Proposition \ref{pfuun} (see statement (5)).
So, $\ran T_0^2\cap \ran T_0^{*2}=\{0\}$.

Because $\ker T_0^2=\{0\}$ and it is accretive and bounded, the inverse operator $B_0:=(T_0^2)^{-1}$ is $m$-accretive. It is not sectorial and we have $\dom B_0^\half \ne\dom B_0^{*\half} .$
Besides $\dom B_0\cap\dom B^*_0=\{0\}$ if and only if $\sM\cap\ran Q=\{0\}$.

Since $Z_0$ is an isometry, relation \eqref{gomres} implies that the operator
\[%%%%
F_0:=T_0^{-1}T_0^*=B^\half_0 B^{*-\half}_0
\]%%%%%
is $m$-accretive, $\RE(F_0 h,h)=0$ for all $h\in\dom F_0=\ran (I+Z_0)$. Besides the operator $iF_0$ is maximal symmetric but non-selfadjoint.
%%%%
%%%
Note that because $iB^\half_0 B^{*-\half}_0$ is maximal symmetric and non-selfadjoint, the inclusion $\dom B^\half_0 B^{*-\half}_0 \varsubsetneqq\dom (B^\half_0 B^{*-\half}_0)^*$ holds. %%%%%
\end{proof}
In the corollary below the countable set $\{B_n\}_{n\in\dN}$ of $m$-accretive operators satisfying \eqref{xnjvscnh} is constructed.
\begin{corollary} \label{cdttt}
Let an operator $Q$ and a subspace $\sM$ be given as in Theorem \ref{ghbvths} and let the operator $Z_0$ be defined by \eqref{jghz}.
Then for each $ n\in\dN$
\begin{enumerate}
\item  the operators
\[
T_n:=Q(I+Z_0^{2n+1})=(I- Z_0^{*(2n+1)})Q=Q+(-1)^niQ\left(Q^{-1}\left(QP_\sM Q\right)^\half\right)^{2n+1}
\]
are $\cfrac{\pi}{4}$--sectorial, $\RE (T_n)=Q$, $\ker T_n=\ker (I+Z_0^{2n+1})=\ker(I-Z_0^{*(2n+1)})=\{0\},$
\item $\ran T_n^*\ne \ran T_n^*,$
\item  the operators %%the square $(T_n)^2$ are of the form
\[
T_n^2=Q(I+Z_0^{2n+1})(I-Z_0^{*(2n+1)})Q
\]
are accretive and non-sectorial,
\item the operators $T_n^{-1}T_n^*$ are $m$-accretive, $\RE(T^{-1}_n T^*_n h,h)=0$ for all $h\in\dom (T_n^{-1}T_n^*)$, and
$\dom(T_n^{-1}T_n^*)\varsubsetneqq\dom (T_n^{-1}T_n^*)^*$
\item if $\sM\cap\ran Q=\{0\}$, then $\ran T^2_n\cap\ran T^{*2}_n=\{0\}$.
\end{enumerate}
Therefore, the operator
$$B_n:= (T_n)^{-2}=\left(Q(I+Z_0^{2n+1})(I-Z_0^{*(2n+1)})Q\right)^{-1}$$
 is $m$-accretive, non-sectorial, and for each $n\in\dN$
\begin{itemize}
\item $\dom B^\half _n\ne\dom B^{*\half}_n$,
\item the operator $B^{\half}_n B^{*-\half}_n$ is $m$-accretive, $\RE (B^{\half}_n B^{*-\half}_n h,h)=0$ for all  $h\in\dom(B^{\half}_n B^{*-\half}_n)$, and
$\dom(B^{\half}_n B^{*-\half}_n)\varsubsetneqq \dom(B^{\half}_n B^{*-\half}_n)^*$,
\item if $\sM\cap\ran Q=\{0\}$, then $\dom B_n\cap\dom B_n^*=\{0\}$.
\end{itemize}
\end{corollary}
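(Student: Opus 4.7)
The strategy is to apply Proposition \ref{pfuun} and Corollary \ref{cntgtyb} directly to the pair $(Q, Z_0^{2n+1})$, where $Z_0$ is the non-unitary isometry with $\ran Z_0=\sM$, $\ker Z_0^{*}=\sM^\perp$, and $QZ_0=-Z_0^{*}Q$ constructed inside the proof of Theorem \ref{ghbvths}. First I verify that the anticommutation $QZ_0^{2n+1}=-(Z_0^{2n+1})^{*}Q$ persists for odd exponents, which is immediate by induction from $QZ_0=-Z_0^{*}Q$ once one notes that even powers commute through $Q$ (recorded in the remarks preceding Corollary \ref{cntgtyb}). Since $Z_0^{2n+1}$ is an isometry, hence a contraction, and $\ker Q=\{0\}$, Proposition \ref{pfuun} applies and yields the representations $T_n=Q(I+Z_0^{2n+1})=(I-Z_0^{*(2n+1)})Q$, $T_n^{*}=Q(I-Z_0^{2n+1})$, and $T_n^{2}=Q(I+Z_0^{2n+1})(I-Z_0^{*(2n+1)})Q$, with $\RE T_n=Q$, the $\pi/4$-sectoriality of $T_n$, and $\ker T_n=\{0\}$ (the latter via $\ker T_n\subseteq\ker\RE T_n=\{0\}$, which in turn forces $\ker(I\pm Z_0^{2n+1})=\{0\}$). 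Substituting $Z_0^{2n+1}=(-1)^{n}i(Q^{-1}(QP_\sM Q)^{\half})^{2n+1}$ produces the explicit form displayed in item~(1).

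The non-sectoriality of $T_n^{2}$ in (3) follows from Proposition \ref{pfuun}(5): $T_n^{2}$ is $\alpha$-sectorial iff $Z_0^{2n+1}\in C_\sH(\alpha)$. But $Z_0^{2n+1}$ is an isometry whose range is contained in $\sM\neq\sH$, so it is non-unitary; the remark after Proposition \ref{cdjqcndf} records that an isometry lies in $\wt C_\sH$ only if it is selfadjoint, and a selfadjoint isometry is unitary, which rules this out. For (2), the equalities \eqref{ghjgecn} give $\ran T_n=Q\ran(I+Z_0^{2n+1})$ and $\ran T_n^{*}=Q\ran(I-Z_0^{2n+1})$; by Lemma \ref{gjcktte}(1)(b) the equality $\ran T_n=\ran T_n^{*}$ would force $\ran(I\pm Z_0^{2n+1})=\sH$. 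Remark \ref{cntgtym} applied to $\pm Z_0$, together with the Cayley-type argument from the proof of Theorem \ref{ghbvths} which shows $\ran(I\pm Z_0)\neq\sH$, rules this out.

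For (4), Proposition \ref{quasherm2}(5) identifies $T_n^{-1}T_n^{*}=(I-Z_0^{2n+1})(I+Z_0^{2n+1})^{-1}$ as $m$-accretive with domain $\ran(I+Z_0^{2n+1})$. Because $Z_0^{2n+1}$ is isometric, the formula for $(Af,f)$ displayed in Section \ref{jklckfc} reduces, with $f=(I+Z_0^{2n+1})h$, to $-((Z_0^{2n+1}-Z_0^{*(2n+1)})h,h)$, which is purely imaginary, yielding $\RE(T_n^{-1}T_n^{*}h,h)=0$. Then $-iT_n^{-1}T_n^{*}$ is maximal symmetric, and if it were selfadjoint its Cayley transform $Z_0^{2n+1}$ would be unitary, contradicting the above; hence $\dom(T_n^{-1}T_n^{*})\subsetneqq\dom(T_n^{-1}T_n^{*})^{*}$. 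Statement (5) is Proposition \ref{pfuun}(6) applied to $Z=Z_0^{2n+1}$: $\ker Z_0^{2n+1}=\{0\}$ is automatic for isometries, and $\ran Z_0^{2n+1}\subseteq\ran Z_0=\sM$ together with the hypothesis $\sM\cap\ran Q=\{0\}$ gives $\ran Z_0^{2n+1}\cap\ran Q=\{0\}$. Finally, the assertions about $B_n:=T_n^{-2}$ follow by inversion: $T_n^{2}$ is bounded, accretive, and injective, so $B_n$ is $m$-accretive and non-sectorial (else $T_n^{2}=B_n^{-1}$ would be sectorial); uniqueness of the accretive square root gives $B_n^{\half}=T_n^{-1}$, so $\dom B_n^{\half}=\ran T_n\neq\ran T_n^{*}=\dom B_n^{*\half}$, $B_n^{\half}B_n^{*-\half}=T_n^{-1}T_n^{*}$, and $\dom B_n\cap\dom B_n^{*}=\ran T_n^{2}\cap\ran T_n^{*2}=\{0\}$ under the extra hypothesis. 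The only subtle point is the bookkeeping confirming that $Z_0^{2n+1}$ inherits from $Z_0$ both the non-unitary isometry property and the anticommutation with $Q$ with the sign dictated by odd parity; once that is in place, everything is a direct invocation of the structural results of Section \ref{rjhtymrd}.
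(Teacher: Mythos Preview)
Your proposal is correct and follows precisely the approach of the paper, which simply observes that the operators $Z_0^{2n+1}$ are non-unitary isometries with $\ran Z_0^{2n+1}\subset\sM$ and satisfy $QZ_0^{2n+1}=-Z_0^{*(2n+1)}Q$, and then says one can repeat the arguments of Theorem \ref{ghbvths}. You have written out those arguments in full, citing the specific structural results (Proposition \ref{pfuun}, Proposition \ref{quasherm2}, Lemma \ref{gjcktte}, Remark \ref{cntgtym}) at each step, so your write-up is in fact a more explicit version of the paper's one-line proof.
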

\begin{proof}
The operators
$Z_0^{2n+1}$ are isometries and $\ran Z_0^{2n+1}\subset\sM$ for all $n\in\dN $. Since
$QZ_0^{2n+1}=-Z_0^{*(2n+1)}Q$, $n\in\dN$ (see Corollary \ref{cntgtyb}), we can apply arguments in Theorem \ref{ghbvths}.
\end{proof}
The next corollary is proved similarly.
\begin{corollary} \label{cdttt2}
Let an operator $Q$ and a subspace $\sM$ be given as in Theorem \ref{ghbvths} and let the operator $Z_0$ be defined by \eqref{jghz}.
 Set
\[%%%%
Z_0(t):=Z_0\exp(-t(I-Z^2_0),\; t\in \dR_+.
\]%%%
%%%%%
Then for each $ t\in\dR_+ $
 the operator
 $$B_0(t):= \left(Q(I+Z_0(t)(I-Z_0(t)^*)Q\right)^{-1}=\left(Q(I+Z_0(t))Q(I+Z_0(t))\right)^{-1}$$ is $m$-accretive.
 The square root takes the form
\[
B_0(t)^\half=\left(Q(I+Z_0(t)\right)^{-1}=\left((I-Z_0(t)^*)Q\right)^{-1},
\]
and
 $$\dom B_0(t)^{\half}\ne \dom B_0(t) ^{*\half}\;\; \forall t\in\dR_+.$$
 In addition, the operator-valued function $B_0(t)$ is norm-resolvent continuous on $\dR_+$.

If $\sM\cap \ran Q=\{0\},$ then $\dom B_0(t)\cap\dom B_0(t)^*=\{0\}$ for each $t\in\dR_+.$

\end{corollary}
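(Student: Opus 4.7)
The plan is to view Corollary \ref{cdttt2} as a $t$-indexed application of Proposition \ref{pfuun} to the pair $\{Q, Z_0(t)\}$, riding on what has already been established for $Z_0$ itself in Theorem \ref{ghbvths}. First I would verify the three hypotheses of Proposition \ref{pfuun}: contractivity of $Z_0(t)$ is Proposition \ref{rcgj}, the anticommutation $Q Z_0(t) = -Z_0(t)^* Q$ is Corollary \ref{cntgtyb}, and $\ker Q = \{0\}$ is part of the setup. Proposition \ref{pfuun} then immediately yields that $T(t) := Q(I+Z_0(t)) = (I - Z_0(t)^*)Q$ is bounded and $\pi/4$-sectorial with $\ker T(t) = \{0\}$, that $T(t)^2$ is accretive, and hence that $B_0(t) := (T(t)^2)^{-1}$ is $m$-accretive with square root $B_0(t)^\half = T(t)^{-1}$, reproducing the formulas in the statement.

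To prove $\dom B_0(t)^\half \ne \dom B_0(t)^{*\half}$, I would recall from the proof of Theorem \ref{ghbvths} that neither $\ran(I+Z_0)$ nor $\ran(I-Z_0)$ equals $\sH$, whence $\ran(I - Z_0^2) \ne \sH$ by Lemma \ref{gjcktte}. Proposition \ref{rcgj} then propagates this to $\ran(I - Z_0(t)^2) \ne \sH$ for every $t \in \dR_+$ (the case $t=0$ being trivial). Lemma \ref{gjcktte} combined with Proposition \ref{pfuun}(3) forces $\ran T(t) \ne \ran T(t)^*$, which is precisely the asserted inequality of domains of the square roots.

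Norm-resolvent continuity follows from Remark \ref{norm}: the map $t \mapsto Z_0(t)$ is norm-continuous on $\dR_+$, hence so are $T(t)$ and $T(t)^2$; the representation
\[
(B_0(t) + \mu I)^{-1} = T(t)^2 (I + \mu T(t)^2)^{-1}, \quad \mu > 0,
\]
with $\|(I + \mu T(t)^2)^{-1}\| \le 1$ by the accretivity of $T(t)^2$, displays the resolvent as a norm-continuous function of $t$. Finally, under the assumption $\sM \cap \ran Q = \{0\}$, the inclusion $\ran Z_0(t) \subseteq \ran Z_0 = \sM$ gives $\ran Z_0(t) \cap \ran Q = \{0\}$; and $\ker Z_0(t) = \{0\}$ because $Z_0$ is an isometry and $\exp(-t(I-Z_0^2))$ is invertible with inverse $\exp(t(I-Z_0^2))$. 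Proposition \ref{pfuun}(6) applied to $\{Q, Z_0(t)\}$ then delivers $\ran T(t)^2 \cap \ran T(t)^{*2} = \{0\}$, i.e.\ $\dom B_0(t) \cap \dom B_0(t)^* = \{0\}$. The main obstacle is purely bookkeeping: confirming that every property used for $Z_0$ transfers cleanly to $Z_0(t)$, which it does thanks to the explicit form $Z_0(t) = Z_0 \exp(-t(I - Z_0^2))$ and the general machinery already supplied by Propositions \ref{rcgj} and \ref{pfuun}.
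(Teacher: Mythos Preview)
Your proposal is correct and follows essentially the same route as the paper, which dispatches the corollary with the single line ``proved similarly'' (to Corollary \ref{cdttt}) and thereby implicitly invokes the anticommutation relation from Corollary \ref{cntgtyb}, the range statement $\ran Z_0(t)\subseteq\sM$, and the arguments of Theorem \ref{ghbvths}/Proposition \ref{pfuun}. You have simply made explicit the details the paper leaves to the reader, including the propagation $\ran(I-Z_0^2)\ne\sH\Rightarrow\ran(I-Z_0(t)^2)\ne\sH$ via Proposition \ref{rcgj} and the resolvent formula for norm-resolvent continuity.
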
%%

Further we construct a continuum family of maximal accretive operators possessing specific properties.

\begin{theorem}\label{normcont}
There exists a norm-resolvent continuous family $\{\cA(\xi)\}_{\xi\in\dC}$ of unbounded and  boundedly invertible $m$-accretive and non-sectorial operators
such that
\begin{itemize}
\item $\dom \cA(\xi)\cap\dom \cA(\xi)^*=\{0\}$ for each $\xi\in\dC$;
\item $\dom \cA(\xi)^\half \ne\dom \cA(\xi)^{*\half}$ for each $\xi\in\dC$;
\item the operator $\RE (\cA(\xi)^{-\half})$ does not depend on $\xi\in\dC$, therefore, the domain of the closed sesquilinear form associated with $m$-sectorial operator $\cA(\xi)^{\half}$ does not depend on $\xi;$
\item the linear manifold $\ran\IM (\cA(\xi)^{-\half})$ is dense in $\sH$ for each $\xi$ and if $\zeta\ne \xi$, then
\[
\begin{array}{l}
\ran\IM (\cA(\xi)^{-\half})\cap\ran \IM (\cA(\zeta)^{-\half})=\{0\},\\[2mm]
\ran\IM (\cA(\xi)^{-\half})\dot+\ran \IM (\cA(\zeta)^{-\half})={\rm Const};
\end{array}
\]
%%%%%
%%%
\item  for each $\xi\in\dC$ the operator $\cA(\xi)^{\half} \cA(\xi)^{*-\half}$ is $m$-accretive and, moreover,
\[
\RE\left(\cA(\xi)^{\half} \cA(\xi)^{*-\half}h,h\right)=0\;\; \forall h\in\dom \left(\cA(\xi)^{\half} \cA(\xi)^{*-\half}\right)  %%;
\]
and $ \dom \left(\cA(\xi)^{\half} \cA(\xi)^{*-\half}\right)\varsubsetneqq  \dom \left(\cA(\xi)^{\half} \cA(\xi)^{*-\half}\right)^*.$

\end{itemize}
\end{theorem}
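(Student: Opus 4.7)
The overall strategy is to apply the construction of Theorem~\ref{ghbvths} uniformly in a complex parameter. Fix once and for all a bounded nonnegative selfadjoint operator $Q$ in $\sH$ with $\ker Q=\{0\}$ and $\ran Q\ne\sH$, and produce a norm-continuous family $\{\sM(\xi)\}_{\xi\in\dC}$ of proper subspaces of $\sH$ satisfying
$$\sM(\xi)\cap\ran Q=\sM(\xi)^\perp\cap\ran Q=\{0\}\quad\forall\,\xi\in\dC,$$
together with $\sM(\xi)\cap\sM(\zeta)=\{0\}$ and $\sM(\xi)\dot+\sM(\zeta)=\sH$ whenever $\xi\ne\zeta$. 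Set
$$T(\xi):=Q+i\bigl(QP_{\sM(\xi)}Q\bigr)^{\half},\qquad \cA(\xi):=T(\xi)^{-2}.$$
Applying Theorem~\ref{ghbvths} pointwise to the pair $(Q,\sM(\xi))$ yields that $T(\xi)$ is $\tfrac{\pi}{4}$-sectorial with $\ker T(\xi)=\{0\}$, that $T(\xi)^2$ is accretive but not sectorial with $\ker T(\xi)^2=\{0\}$ and $\ran T(\xi)^2\cap\ran T(\xi)^{*2}=\{0\}$, and hence that $\cA(\xi)$ is $m$-accretive, non-sectorial, boundedly invertible, with $\dom\cA(\xi)\cap\dom\cA(\xi)^*=\{0\}$ and $\dom\cA(\xi)^{\half}\ne\dom\cA(\xi)^{*\half}$; the operator $\cA(\xi)^{\half}\cA(\xi)^{*-\half}$ is $m$-accretive with vanishing real part and $\dom\cA(\xi)^{\half}\cA(\xi)^{*-\half}\varsubsetneqq\dom(\cA(\xi)^{\half}\cA(\xi)^{*-\half})^*$.

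The remaining assertions translate through this parametrisation. Since $\RE T(\xi)=Q$ for every $\xi$, the operator $\RE(\cA(\xi)^{-\half})=Q$ is $\xi$-independent, and consequently (by the Second Representation Theorem combined with Theorem~\ref{sqdomrav22}) the form domain of the $m$-sectorial operator $\cA(\xi)^{\half}$ is also $\xi$-independent. Using the factorisation $(QP_{\sM(\xi)}Q)^{\half}=VP_{\sM(\xi)}Q=QV^*$ from the proof of Theorem~\ref{ghbvths}, with $V:\sM(\xi)\to\sH$ the unitary bijection supplied by $\sM(\xi)^\perp\cap\ran Q=\{0\}$, one reads
$$\ran\IM(\cA(\xi)^{-\half})=\ran\IM T(\xi)=Q\sM(\xi),$$
whose density in $\sH$ is exactly the condition $\sM(\xi)^\perp\cap\ran Q=\{0\}$. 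Injectivity of $Q$ and the prescribed transversality then deliver
$$Q\sM(\xi)\cap Q\sM(\zeta)=Q\bigl(\sM(\xi)\cap\sM(\zeta)\bigr)=\{0\},\qquad Q\sM(\xi)\dot+Q\sM(\zeta)=Q\sH=\ran Q,$$
a linear manifold independent of $\xi\ne\zeta$. Norm-resolvent continuity of $\{\cA(\xi)\}$ follows from norm-continuity of $\xi\mapsto T(\xi)^2$ together with the identity $(\cA(\xi)+I)^{-1}=T(\xi)^2(I+T(\xi)^2)^{-1}$, where the inverse exists because $T(\xi)^2$ is accretive.

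The only delicate point is the construction of the family $\{\sM(\xi)\}_{\xi\in\dC}$ meeting every requirement simultaneously. A natural candidate is the graph-subspace family associated with a fixed orthogonal splitting $\sH=\sH_+\oplus\sH_-$ into two infinite-dimensional subspaces and a fixed unitary $U:\sH_+\to\sH_-$, namely
$$\sM(\xi):=\{h+\xi Uh:\ h\in\sH_+\},\qquad \xi\in\dC.$$
A short block-matrix computation (using $A^*A=|\xi|^2 I$ for $A=\xi U$) shows that $P_{\sM(\xi)}$ is a rational function of $\xi$ and $\bar\xi$, hence norm-continuous on $\dC$; transversality $\sM(\xi)\cap\sM(\zeta)=\{0\}$ and $\sM(\xi)\dot+\sM(\zeta)=\sH$ for $\xi\ne\zeta$ is immediate. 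The genuinely hard part — and the principal obstacle of the proof — is to select $Q$ so that $\ran Q$ has trivial intersection with every member of the two-parameter family $\{\sM(\xi),\sM(\xi)^\perp\}_{\xi\in\dC}$ at once. This is arranged using the operator-range theorem of \cite{schmud} cited in the paper, placing $\ran Q$ in general position with respect to this uncountable family.
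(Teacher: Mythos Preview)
Your overall strategy coincides with the paper's: define $\cT(\xi)=Q+i(QP_{\sM(\xi)}Q)^{\half}$, set $\cA(\xi)=\cT(\xi)^{-2}$, and read off every listed property from Theorem~\ref{ghbvths} together with the identities $\RE(\cA(\xi)^{-\half})=Q$ and $\ran\IM(\cA(\xi)^{-\half})=Q\sM(\xi)$. That part of your write-up is correct and essentially identical to the paper's argument.

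The gap is in the construction of the family $\{\sM(\xi)\}_{\xi\in\dC}$. You correctly isolate what is needed: a norm-continuous family of proper subspaces with $\sM(\xi)\cap\ran Q=\sM(\xi)^\perp\cap\ran Q=\{0\}$ for every $\xi$, pairwise transversal in the sense $\sM(\xi)\cap\sM(\zeta)=\{0\}$, $\sM(\xi)\dot+\sM(\zeta)=\sH$. Your graph-subspace candidate $\sM(\xi)=\{h+\xi Uh:h\in\sH_+\}$ is a reasonable guess (and is in fact close in spirit to what is used), and it does give norm-continuity of $P_{\sM(\xi)}$ and pairwise transversality. But your justification of the intersection conditions is not valid. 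Schm\"udgen's theorem, as quoted in the paper, says: \emph{given} an operator range $\cR$, there exists \emph{one} subspace $\sM$ with $\sM\cap\cR=\sM^\perp\cap\cR=\{0\}$. It does not let you prescribe an uncountable family of subspaces in advance and then locate an operator range in general position relative to all of them, nor does it produce uncountably many subspaces for a given $\cR$. So the sentence ``This is arranged using the operator-range theorem of \cite{schmud}'' does not close the argument.

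The paper handles this step by invoking \cite[Theorem 3.9]{Arl_ZAg_IEOT_2015}, which is precisely the statement that, starting from a fixed $Q$ with $\ker Q=\{0\}$ and $\ran Q\ne\sH$, one can manufacture a family $\{\sM(\xi)\}_{\xi\in\dC}$ satisfying all of the conditions \eqref{ctvmgj}, including norm-continuity of $P_{\sM(\xi)}$ and the relation $\sM(\xi)^\perp=\sM(-1/\bar\xi)$. That result is not a corollary of Schm\"udgen's theorem; it requires a separate (and nontrivial) construction. If you want to keep your graph-subspace picture, you would have to reproduce the content of that theorem, not merely cite \cite{schmud}.
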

\begin{proof}
Let $Q$ be a bounded nonnegative selfadjoint operator, $\ker Q=\{0\}$ and $\ran Q\ne \sH$. By \cite[Theorem 3.9]{Arl_ZAg_IEOT_2015} there exists a family of subspaces $\{\sM(\xi)\}_{\xi\in\dC}$ such that
\begin{equation} \label{ctvmgj}
\begin{array}{l}
\bullet\; \sM(\xi)\cap\ran Q=(\sM(\xi))^\perp\cap\ran Q=\{0\}\;\;\forall \xi\in\dC,\\
\bullet\; \sM(\xi)\cap\sM(\zeta)=\{0\},\;\sM(\xi)\dot+\sM(\zeta)=\sH,\; \xi\ne\zeta,\\
\bullet\; \sM(\xi)^\perp=\sM(-1/\bar\xi)\; \forall \xi\in\dC\setminus\{0\},\\
\bullet \;\mbox{the ortho-projector-valued function}\; P_{\sM(\xi)}\; \mbox{is continuous on}\; \dC\\
\quad\mbox{w.r.t. the operator-norm topology}.
\end{array}
\end{equation}

Set
\[
\cT(\xi):=Q+i\left(QP_{\sM(\xi)}Q\right)^{\half},\;\xi\in\dC.
\]
Then $\cT(\xi)$ is of the form \eqref{jcyjdygh} and the operator-valued function $\cT(\xi)$, $\xi\in\dC$
 is continuous on $\dC$ w.r.t. the operator-norm topology. The corresponding operator $Z(\xi)=iQ^{-1}\left(QP_{\sM(\xi)}Q\right)^{\half}$ isometrically maps $\sH$ onto $\sM(\xi)$.

 Set $\cA(\xi):=\cT(\xi)^{-2}$. Applying Theorem \ref{ghbvths}, we get that all statements of this theorem hold true.

%%%%
By definition of $\cT(\xi)$ and $\cA(\xi)$ we have that
\begin{enumerate}

\item holds the equality $\RE (\cA(\xi)^{-\half})=Q$, %%%\Longleftrightarrow h(\cA(\xi)^{\half},\cA(\xi)^{*\half})=Q^{-1}\;\; \forall \xi\in\dC,
therefore \eqref{hfdhtfk} yields that the domain of the closed sesquilinear form associated with $m$-sectorial operator $\cA(\xi)^{\half}$ does not depend on $\xi;$
\item $\IM (\cA(\xi)^{-\half})=\left(QP_{\sM(\xi)}Q\right)^{\half}$, consequently, $\ran \IM (\cA(\xi)^{-\half})=Q\sM(\xi)$ and from \eqref{ctvmgj}
 \[
\begin{array}{l}
\cran \IM (\cA(\xi)^{-\half})=\sH\;\; \forall \xi,\\
\ran\IM (\cA(\xi)^{-\half})\cap\ran \IM (\cA(\zeta)^{-\half})=\{0\},\\
\ran\IM (\cA(\xi)^{-\half})\dot+\ran \IM (\cA(\zeta)^{-\half})=\ran Q=\ran \RE (\cA(\xi)^{-\half}),\;\xi\ne\zeta.
\end{array}
\]

\end{enumerate}
%%%%
\end{proof}
\subsection{Square roots of m-sectorial operators}
%%%%
\begin{theorem} \label{rjynh}
Let $Q$ be a bounded nonnegative selfadjoint operator, $\ker Q=\{0\}$ and $\ran Q\ne \sH$. Suppose $\sM$ is a proper subspace in $\sH$ such that $\sM^\perp\cap\ran Q=\{0\}$ and
let the contractive operator $Z_0$ be given by
$$Z_0=i Q^{-1}\left(QP_\sM Q\right)^\half .$$
For $\gamma\in(0,1)$ set
\[
\begin{array}{l}
Z_\gamma^{(0)}:=\left((I+Z_0)^\gamma-(I-Z_0)^\gamma \right)\left((I+Z_0)^\gamma+(I-Z_0)^\gamma \right)^{-1},\; \gamma\in(0,1),\\[2mm]
Z_\gamma^{(0)}(t):=Z_\gamma^{(0)}\exp(-t(I-(Z_\gamma^{(0)})^2),\; t\in\dR_+,\\[2mm]
B_{\gamma,n}:= \left(Q(I+(Z_\gamma^{(0)})^{2n+1})(I-(Z_\gamma^{(0)})^{*(2n+1)})Q\right)^{-1},\; n\in\dN_0,\\[2mm]
S_\gamma (t):=\left(Q(I+Z_\gamma^{(0)}(t))(I-Z_\gamma^{(0)}(t)^*)Q\right)^{-1},\; t\in\dR_+.
\end{array}
\]
Then the operators $B_{\gamma,n}$ and $S_\gamma(t)$ are unbounded $m-\cfrac{\pi\gamma}{2}$-sectorial for each $n\in\dN_0$ and for each $t\in\dR_+$, the operator-valued function $S_\gamma(t)$ is norm-resolvent continuous on $\dR_+$, and
\begin{equation}\label{ythfd11}
\dom B_{\gamma,n}^\half\ne \dom B_{\gamma,n}^{*\half},\;
\dom S_\gamma(t)^\half \ne\dom S_\gamma(t)^{*\half}.
\end{equation}
If $\sM\cap\ran Q=\{0\}$, then
\begin{equation}\label{hfd22}
\dom B_{\gamma,n}\cap \dom B_{\gamma,n}^*=0,\;\dom S_\gamma(t)\cap\dom S^*_\gamma(t)=\{0\}\;\;\forall n\in\dN_0,\;\forall t\in\dR_+.
\end{equation}
\end{theorem}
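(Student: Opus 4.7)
The overall strategy is to reduce the four assertions of the theorem to Proposition~\ref{pfuun} and Theorems~\ref{gomthe3}, \ref{sqdomrav22}, using that the pairs $(Q, W_n)$ with $W_n := (Z_\gamma^{(0)})^{2n+1}$ and $(Q, Z_\gamma^{(0)}(t))$ both satisfy the standing hypotheses of Proposition~\ref{pfuun}. The starting observation is that $B_0 = (Q(I+Z_0)(I-Z_0^*)Q)^{-1}$ from Theorem~\ref{ghbvths} is, in the language of Theorem~\ref{gomthe3} applied with $\cL = Q^{-1}$, precisely the base $m$-accretive operator; that theorem therefore gives $Z_\gamma^{(0)} \in C_\sH(\tfrac{\pi\gamma}{2})$ and $Q Z_\gamma^{(0)} = -Z_\gamma^{(0)*} Q$. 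Corollary~\ref{cntgtyb} together with Theorem~\ref{cnfhzntj} propagates both properties to each $W_n$, while Proposition~\ref{rcgj} combined with Corollary~\ref{cntgtyb} transfers them to $Z_\gamma^{(0)}(t)$ for every $t \in \dR_+$.

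Sectoriality and unboundedness of $B_{\gamma,n}$ and $S_\gamma(t)$ then follow immediately from Proposition~\ref{pfuun}: items (1) and (5) give that the bounded factors $B_{\gamma,n}^{-1} = Q(I+W_n)(I-W_n^*)Q$ and $S_\gamma(t)^{-1}$ are $\tfrac{\pi\gamma}{2}$-sectorial with trivial kernel, and their ranges sit inside $\ran Q \subsetneq \sH$, so the inverses are unbounded $m$-$\tfrac{\pi\gamma}{2}$-sectorial operators.

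For the domain inequality \eqref{ythfd11}, I would bootstrap from the base case $n=0$. By Theorem~\ref{gomthe3}(3), the inequality $\dom B_0^{1/2} \neq \dom B_0^{*1/2}$ (guaranteed by Theorem~\ref{ghbvths}) transfers to $\dom B_{\gamma,0}^{1/2} \neq \dom B_{\gamma,0}^{*1/2}$, which by Theorem~\ref{sqdomrav22}((i)$\Longleftrightarrow$(iv)) means $\ran(I - (Z_\gamma^{(0)})^2) \neq \sH$. The factorization
\[
I - (Z_\gamma^{(0)})^{4n+2} = (I - (Z_\gamma^{(0)})^2)\sum_{k=0}^{2n}(Z_\gamma^{(0)})^{2k}
\]
then gives $\ran(I - W_n^2) \subseteq \ran(I - (Z_\gamma^{(0)})^2) \neq \sH$, so \eqref{ythfd11} for $B_{\gamma,n}$ follows from a second application of Theorem~\ref{sqdomrav22}. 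Proposition~\ref{rcgj} similarly propagates this to $\ran(I - Z_\gamma^{(0)}(t)^2) \neq \sH$ for every $t \geq 0$, yielding \eqref{ythfd11} for $S_\gamma(t)$. The norm-resolvent continuity of $S_\gamma(t)$ reduces to norm-continuity of the bounded inverse $S_\gamma(t)^{-1}$, which is a fixed polynomial in $Q$, $Z_\gamma^{(0)}(t)$ and $Z_\gamma^{(0)}(t)^*$, and hence norm-continuous by Remark~\ref{norm}.

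For \eqref{hfd22} under $\sM \cap \ran Q = \{0\}$, I would apply Proposition~\ref{pfuun}(6) to $(Q, W_n)$ and $(Q, Z_\gamma^{(0)}(t))$. The range hypothesis is the easy part: by Remark~\ref{gjlheu}, $Z_\gamma^{(0)} = Z_0 \psi_\gamma(Z_0)$, so $\ran W_n \subseteq \ran Z_\gamma^{(0)} \subseteq \ran Z_0 = \sM$, and similarly $\ran Z_\gamma^{(0)}(t) \subseteq \sM$; thus both intersect $\ran Q$ only at $0$. The remaining injectivity hypothesis $\ker W_n = \ker Z_\gamma^{(0)}(t) = \{0\}$ reduces, via $\ker Z_0 = \{0\}$ and bounded invertibility of the exponential factor $\exp(-t(I-(Z_\gamma^{(0)})^2))$, to $\ker \psi_\gamma(Z_0) = \{0\}$. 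This is the step where I expect the main analytic work: an elementary extremum computation with $w_\gamma = (u-1)/(u+1)$ along the rays $u = s\,e^{\pm i\gamma\pi/2}$, $s > 0$, yields the boundary estimate $|\psi_\gamma(z)| \geq \tan(\gamma\pi/4) > 0$ for $z \in \partial\dD$; combined with the maximum principle applied to $1/\psi_\gamma \in H^\infty(\dD)$ and the Sz.-Nagy--Foias functional calculus for the contraction $Z_0$, this makes $\psi_\gamma(Z_0)$ boundedly invertible and hence injective, completing \eqref{hfd22}.
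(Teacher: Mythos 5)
Your proposal is correct and follows essentially the same route as the paper: base case via Theorem \ref{gomthe3} applied to $B_0$ with $\cL=Q^{-1}$, propagation of membership in $C_\sH(\pi\gamma/2)$ and of the anticommutation relation to the odd powers and to $Z_\gamma^{(0)}(t)$ via Theorem \ref{cnfhzntj}, Corollary \ref{cntgtyb} and Proposition \ref{rcgj}, sectoriality and unboundedness from Proposition \ref{pfuun}, and the domain inequalities \eqref{ythfd11} obtained by transporting the failure of surjectivity through the factorization of $I-Z^{2n+1}$ (your identity for $I-(Z_\gamma^{(0)})^{4n+2}$ is exactly Remark \ref{cntgtym} applied to $(Z_\gamma^{(0)})^2$; the paper phrases the criterion through condition (iii) of Theorem \ref{sqdomrav22} rather than your (iv), which is the same by Lemma \ref{gjcktte}). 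The only genuine divergence is in \eqref{hfd22}: the paper simply asserts $\ker Z_\gamma^{(0)}=\{0\}$ and $\ker(I-(Z_\gamma^{(0)})^2)=\{0\}$ alongside $\ran Z_\gamma^{(0)}\subseteq\sM$, whereas you supply an argument via bounded invertibility of $\psi_\gamma(Z_0)$; that extra work is welcome, but as phrased it has a small circularity: invoking the maximum principle for $1/\psi_\gamma\in H^\infty(\dD)$ presupposes that $\psi_\gamma$ has no zeros inside $\dD$, which the boundary bound $|\psi_\gamma|\ge\tan(\gamma\pi/4)$ on $\partial\dD$ alone does not give. The fix is immediate: $w_\gamma(z)=0$ forces $\left((1+z)/(1-z)\right)^\gamma=1$, and injectivity of $\zeta\mapsto\zeta^\gamma$ on the closed right half-plane gives $z=0$, where $\psi_\gamma(0)=\gamma>0$; so $\psi_\gamma$ and $1/\psi_\gamma$ both lie in the disk algebra, and since $Z_0$ is an isometry (not necessarily completely non-unitary) you should invoke the continuous (von Neumann/disk-algebra) functional calculus rather than the full Sz.-Nagy--Foias $H^\infty$ calculus to conclude that $\psi_\gamma(Z_0)$ is boundedly invertible, hence $\ker Z_\gamma^{(0)}=\{0\}$ and likewise for its odd powers and for $Z_\gamma^{(0)}(t)$ after removing the invertible exponential factor.
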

\begin{proof}
Due to Theorem \ref{ghbvths}, the operator
\[
%%\begin{array}{l}
B_0:=\left(QP_{\sM^\perp} Q+i\left(Q\left(QP_\sM Q\right)^\half +\left(QP_\sM Q\right)^\half Q\right) \right)^{-1}%%\\
=\left(Q(I+Z_0)(I-Z^*_0) Q\right)^{-1}
%%%%%
\]
is $m$-accretive but non-sectorial, and
the inequality $\dom B^\half_0\ne \dom B^{*\half}_0$ holds. This is because
the operator $Z_0$ is a non-unitary isometry, $QZ_0=-Z_0^*Q,$ and
$$\ran (I+Z_0)\ne \ran (I-Z_0)$$
 (see the proof of Theorem \ref{ghbvths}).
 By Theorem \ref{gomthe3}
the operator $Z_\gamma^{(0)}$ belongs to the class $C_\sH(\cfrac{\pi\gamma}{2})$ and  hence
$$(Z_\gamma^{(0)})^{2n+1}, Z_\gamma^{(0)}(t)\in C_\sH(\cfrac{\pi\gamma}{2})$$
 for all natural numbers $n$ and for all $t\in\dR_+$ (see Theorem \ref{cnfhzntj} and Proposition \ref{rcgj}).
Besides
$$Q(Z_\gamma^{(0)})^{2n+1}=-(Z_\gamma^{(0)})^{*(2n+1)}Q,\; QZ_\gamma^{(0)}(t)=-Z_\gamma^{(0)}(t)^*Q.$$
Consequently, by Corollary \ref{cntgtyb} and Theorem \ref{sqdomrav} the operators $B_{\gamma,n}$ and $S_\gamma (t)$ and $m-\cfrac{\pi\gamma}{2}$ sectorial for all $n\in\dN$ and all $t\in\dR_+$.

Since $B_{\gamma,0}= S_\gamma(0)$, due to Theorem \ref{gomthe3} (statement (3)) we have the inequalities
$$\dom B_{\gamma,0}^\half=\dom S_\gamma(0)^\half\ne \dom B_{\gamma,0}^{*\half}= \dom S_\gamma(0)^{*\half}, $$%%\; \dom S_\gamma^\half(0) \ne\dom S_\gamma^{*\half}(0).$$
i.e., $\ran (I+Z_\gamma^{(0)})\ne\ran (I-Z_\gamma^{(0)})$. Applying Lemma \ref{gjcktte}, Remark \ref{cntgtym}, and Proposition \ref{rcgj}, one obtains
for all $n\in\dN$ and for all $t\in\dR_+$ the inequalities
\[
\ran (I+Z_\gamma^{(0)})^{2n+1})\ne \ran (I-Z_\gamma^{(0)})^{2n+1}),\;\ran (I+Z_\gamma^{(t)})\ne\ran (I-Z_\gamma^{(t)}).
\]
Now inequalities \eqref{ythfd11} follow from Proposition \ref{pfuun} and Corollary \ref{cntgtyb}.

Assume $\sM\cap\ran Q=\{0\}$. Taking into account that (see Remark \ref{gjlheu})
$$\ran Z_\gamma^{(0)}\subseteq\sM,\;\ker Z_\gamma^{(0)}=\{0\},\;\ker (I-{Z_\gamma^{(0)}}^2)=\{0\},$$
and applying Proposition \ref{pfuun}, we obtain \eqref{hfd22}.
%%%  %%%%

Since the function $S_\gamma(t)^{-1}$ is continuous w.r.t. operator-norm topology on $\dR_+$, the function $S_\gamma(t)$ is norm-resolvent continuous on $\dR_+$.
\end{proof}

\begin{corollary}\label{normcont2}
Let $\alpha\in (0,\pi/2)$.

(1) There exists a  family $\{\cS_\alpha(\xi)\}_{\xi\in\dC}$ of unbounded and coercive $m-\alpha$-sectorial operators
such that for each $\xi\in\dC$
\begin{itemize}

\item $\dom \cS_\alpha(\xi)\cap\dom \cS_\alpha(\xi)^*=\{0\}$,
\item $\dom \cS_\alpha(\xi)^\half \ne\dom\cS_\alpha(\xi)^{*\half}$,
\item $\RE (\cS_\alpha(\xi)^{-\half})$={\rm Const},
\item $\ran\IM (\cS_\alpha(\xi)^{-\half})\cap\ran \IM (\cS_\alpha(\zeta)^{-\half})=\{0\},\;\xi\ne\zeta.$
\end{itemize}
(2) There exists a norm-resolvent continuous family $\{\cX_\alpha(\xi)\}_{\xi\in\dC}$ of unbounded and coercive $m-\alpha$-sectorial operators
such that
for each $\xi\in\dC$
\begin{itemize}
\item $\dom \cX_\alpha(\xi)\cap\dom \cX_\alpha(\xi)^*=\{0\}$,
\item $\dom \cX_\alpha(\xi)^\half =\dom \cX_\alpha(\xi)^{*\half}={\rm Const}$ (does not depend on $\xi$),
\item the linear manifold $\ran\IM (\cX_\alpha(\xi)^{-\half})$ is dense in $\sH$ for each $\xi$ and if $\zeta\ne \xi$, then
\[
\begin{array}{l}
\ran\IM (\cX_\alpha(\xi)^{-\half})\cap\ran \IM (\cX_\alpha(\zeta)^{-\half})=\{0\},\\
\ran\IM (\cX_\alpha(\xi)^{-\half})\dot+\ran \IM (\cX_\alpha(\zeta)^{-\half})={\rm Const}.
\end{array}
\]
\end{itemize}
\end{corollary}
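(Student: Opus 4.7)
The plan is to mimic the construction of Theorem \ref{normcont} in the sectorial setting, substituting suitable contractions in $C_\sH(\alpha)$ for the non-unitary isometry used there. Fix once and for all a bounded nonnegative selfadjoint $Q$ with $\ker Q = \{0\}$, $\ran Q \ne \sH$, and the family $\{\sM(\xi)\}_{\xi \in \dC}$ of subspaces enjoying properties \eqref{ctvmgj}; set
\[
Z_0(\xi) := iQ^{-1}(QP_{\sM(\xi)}Q)^{1/2},
\]
the non-unitary isometry of Theorem \ref{ghbvths} with $\ran Z_0(\xi) = \sM(\xi)$ and $QZ_0(\xi) = -Z_0(\xi)^*Q = i(QP_{\sM(\xi)}Q)^{1/2}$.

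For part (1), set $\gamma := 2\alpha/\pi$, let $Z_\gamma^{(0)}(\xi)$ be the sectorial modification \eqref{vytytyf} of $Z_0(\xi)$, and define
\[
\cS_\alpha(\xi) := \bigl(Q(I+Z_\gamma^{(0)}(\xi))(I-Z_\gamma^{(0)}(\xi)^*)Q\bigr)^{-1}.
\]
Theorem \ref{rjynh} with $n=0$ directly supplies coercivity, $m$-$\alpha$-sectoriality, the inequality $\dom \cS_\alpha(\xi)^\half \ne \dom \cS_\alpha(\xi)^{*\half}$, and (since $\sM(\xi) \cap \ran Q = \{0\}$) $\dom \cS_\alpha(\xi) \cap \dom \cS_\alpha(\xi)^* = \{0\}$. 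Theorem \ref{summar} gives $\RE(\cS_\alpha(\xi)^{-\half}) = Q$ (constant) and $\ran \IM(\cS_\alpha(\xi)^{-\half}) = Q \ran Z_\gamma^{(0)}(\xi) \subseteq Q\sM(\xi)$ via Remark \ref{gjlheu}; injectivity of $Q$ combined with $\sM(\xi) \cap \sM(\zeta) = \{0\}$ then forces pairwise trivial intersections of these ranges.

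For part (2) the isometric character of $Z_\gamma^{(0)}(\xi)$ would make criterion (f) of Theorem \ref{sqdomrav22} fail, so I replace it by the scalar rescaling $Z(\xi) := cZ_0(\xi)$ with $c \in (0, \tan(\alpha/2)]$; then $\|Z(\xi)\| = c < 1$ and $QZ(\xi) = -Z(\xi)^*Q$. The elementary bound $|\IM(Z_0(\xi)f, f)| \le \|P_{\sM(\xi)}f\|\,\|f\| \le \|f\|^2$ (derivable from $(QP_{\sM(\xi)}Q)^{1/2} = VP_{\sM(\xi)}Q$ in the proof of Theorem \ref{ghbvths}) reduces Proposition \ref{cdjqcndf}(iii) to $2c \le \tan\alpha (1 - c^2)$, so $Z(\xi) \in C_\sH(\alpha)$. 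Setting $\cX_\alpha(\xi) := (Q(I+Z(\xi))(I-Z(\xi)^*)Q)^{-1}$, Theorem \ref{sqdomrav} yields $m$-$\alpha$-sectoriality and coercivity, Theorem \ref{sqdomrav22}(f) yields $\dom \cX_\alpha(\xi)^\half = \dom \cX_\alpha(\xi)^{*\half} = \ran Q$ (constant), and Proposition \ref{pfuun}(6)---applicable because $\ran Z(\xi) = \sM(\xi)$ meets $\ran Q$ trivially, $\ker Z(\xi) = \{0\}$, and $\ker(I - Z(\xi)^2) = \{0\}$ by $\|Z(\xi)\| < 1$---supplies $\dom \cX_\alpha(\xi) \cap \dom \cX_\alpha(\xi)^* = \{0\}$. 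Using $QZ_0(\xi) = i(QP_{\sM(\xi)}Q)^{1/2}$ one obtains the explicit factorization
\[
\cX_\alpha(\xi)^{-1} = \bigl(Q + ic(QP_{\sM(\xi)}Q)^{1/2}\bigr)^2,
\]
operator-norm continuous in $\xi$ (since $\xi \mapsto P_{\sM(\xi)}$ is, and the square root is norm-continuous on nonnegative operators), giving norm-resolvent continuity of $\cX_\alpha(\xi)$. Finally $\ran \IM(\cX_\alpha(\xi)^{-\half}) = c\,\ran(QP_{\sM(\xi)}Q)^{1/2} = Q\sM(\xi)$: density follows from $\sM(\xi)^\perp \cap \ran Q = \{0\}$, pairwise trivial intersections from $\sM(\xi) \cap \sM(\zeta) = \{0\}$ and injectivity of $Q$, and the constant direct sum $Q\sM(\xi) \dot+ Q\sM(\zeta) = \ran Q$ from $\sM(\xi) \dot+ \sM(\zeta) = \sH$.

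The main technical point is part (2): producing a parameter-dependent contraction $Z(\xi)$ that is simultaneously in $C_\sH(\alpha)$, strictly contractive, and anticommutes with $Q$ in the sense $QZ(\xi) = -Z(\xi)^*Q$. The scalar rescaling $Z(\xi) = cZ_0(\xi)$ achieves all three at once, provided $c$ stays below the explicit threshold $\tan(\alpha/2)$; this is the only genuinely new ingredient beyond the machinery already assembled in Theorems \ref{ghbvths}, \ref{normcont}, \ref{rjynh}.
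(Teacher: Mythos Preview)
Your proposal is correct and follows essentially the paper's own construction: the same $Q$ and family $\{\sM(\xi)\}$ from \eqref{ctvmgj}, the same $Z_{\gamma}^{(0)}(\xi)$ with $\gamma=2\alpha/\pi$ for part~(1) via Theorem~\ref{rjynh}, and the same scalar rescaling $cZ_0(\xi)$ with $c=\tan(\alpha/2)$ for part~(2) (the paper writes $a_\alpha=\tan(\alpha/2)$ and invokes the remark that $\|Z\|=\delta<1$ implies $Z\in C_\sH(2\tan^{-1}\delta)$, which is exactly your inequality $2c\le\tan\alpha(1-c^2)$). The one terminological slip---calling $Z_\gamma^{(0)}(\xi)$ ``isometric'' when it merely has norm~$1$---occurs only in a motivational aside and does not affect the argument.
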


\begin{proof}
Let $Q$ be a bounded nonnegative selfadjoint operator, $\ker Q=\{0\}$ and $\ran Q\ne \sH$ and let $\{\sM(\xi)\}_{\xi\in\dC}$ be a family of subspaces \eqref{ctvmgj}.
%%as in Theorem \ref{normcont}.
Set
\[
\gamma_\alpha=\cfrac{2\alpha}{\pi},\; a_\alpha=\tan\frac{\alpha}{2},
\]
\[
\begin{array}{l}
Z_0(\xi):=iQ^{-1}\left(QP_{\sM(\xi)}Q\right)^{\half},\\[2mm]
Z_{{\gamma_\alpha}}^{(0)}(\xi):=\left((I+Z_0(\xi))^{\gamma_\alpha}-(I-Z_0(\xi))^{\gamma_\alpha} \right)\left((I+Z_0(\xi))^{\gamma_\alpha}+(I-Z_0(\xi))^{\gamma_\alpha} \right)^{-1},\\[2mm]
Z_{a_\alpha}(\xi):=a_\alpha Z_0(\xi)=ia_\alpha Q^{-1}\left(QP_{\sM(\xi)}Q\right)^{\half},\;\xi\in\dC.
\end{array}
\]

(1)
Define
\begin{multline*}
\cS_\alpha(\xi):=\left(Q(I+Z_{{\gamma_\alpha}}^{(0)}(\xi))(I-Z_{{\gamma_\alpha}}^{(0)}(\xi)^*)Q\right)^{-1}\\
=\left(Q(I+Z_{{\gamma_\alpha}}^{(0)}(\xi)\right)^{-2}=\left((I-Z_{{\gamma_\alpha}}^{(0)}(\xi)^*)Q\right)^{-2},\; \xi\in\dC
\end{multline*}
and apply Theorem \ref{rjynh}.

(2) We have $QZ_{a_\alpha}(\xi)=-Z_{a_\alpha}(\xi)^*Q ,$ $||Z_{a_\alpha}(\xi)||=a_\alpha<1$ and therefore  $Z_{a_\alpha}(\xi)\in C_\sH(\alpha)$, for each $\xi\in\dC$.
Set
\[
%%%%
 \cX_\alpha(\xi):=\left(Q+ia_\alpha \left(QP_{\sM(\xi)}Q\right)^{\half}\right)^{-2}=\left(Q(I+Z_{a_\alpha}(\xi))(I-Z_{a_\alpha}(\xi)^*)Q
 \right)^{-1}.%%\cG_\alpha(\xi)^{-1}.
%%%%
\]
By Proposition \ref{pfuun} the operator $\cX_\alpha(\xi)$ is $m-\alpha$-sectorial. Theorem \ref{sqdomrav22} and \eqref{ghjgecn} yield the equalities  $\dom (\cX_\alpha(\xi))^\half =\dom(\cX_\alpha(\xi))^{*\half}=\ran Q$ for all $\xi\in\dC$. Since $\ran Z_{a_\alpha}(\xi)=\sM(\xi)$, from Proposition \ref{pfuun} we get that $\dom \cX_\alpha(\xi)\cap\dom \cX_\alpha(\xi)^*=\{0\}$ for all $\xi\in\dC$.

By definition, we have that hold the following equalities:
\[%%%
\RE (\cS_\alpha(\xi)^{-\half})=\RE (\cX_\alpha(\xi))^{-\half})=Q\;\;\forall \xi.
\]%%%
Therefore, from \eqref{hfdhtfk} we get that domains of the closed sesquilinear forms associated with $\cS_\alpha(\xi)^{\half}$ and $\cX_\alpha(\xi)^\half$ do not depend on $\xi.$

From the  relations
\begin{multline*} \ran Z_{a_\alpha}(\xi)=\sM(\xi),\;
\ran Z_{{\gamma_\alpha}}^{(0)}(\xi)\subseteq\sM(\xi),\\
\ran\IM (\cX_\alpha(\xi)^{-\half})=Q\ran\sM(\xi),\;
 \sM(\xi)^\perp\cap\ran Q=\{0\}\;\;\forall \xi
\end{multline*}
    (see  Remark \ref{gjlheu}) it follows  that  the linear manifold $\ran\IM (\cX_\alpha(\xi)^{-\half})$ is dense in $\sH$ for all $\xi$ and hold the relations
\[
\begin{array}{l}
\ran\IM (\cS_\alpha(\xi)^{-\half})\cap\ran \IM (\cS_\alpha(\zeta)^{-\half})=\{0\},\\
\ran\IM (\cX_\alpha(\xi)^{-\half})\cap\ran \IM (\cX_\alpha(\zeta)^{-\half})=\{0\},\\
\ran\IM (\cX_\alpha(\xi)^{-\half})\dot+\ran \IM (\cX_\alpha(\zeta)^{-\half})=\ran Q,\;\xi\ne\zeta.
\end{array}
\]
\end{proof}

\end{document}